\documentclass[10pt]{amsart}

\usepackage{xcolor}
\definecolor{winered}{rgb}{0.8,0,0}
\definecolor{deepblue}{rgb}{0,0,0.8}
\usepackage[colorlinks=true]{hyperref}
\hypersetup{linkcolor=winered, citecolor=deepblue}
\usepackage{amsmath}
\usepackage{amsthm}
\usepackage{amssymb}
\usepackage{mathrsfs}
\usepackage{mathtools}
\usepackage{tikz-cd}
\usepackage[color=cyan]{todonotes}
\usepackage{comment}
\usepackage{thmtools}   % Theorem-like environments, extends amsthm
\usepackage{mathtools}  % Fonts and environments for mathematical formulae
\usepackage[shortlabels]{enumitem}

\DeclareRobustCommand{\SkipTocEntry}[5]{}

\declaretheoremstyle[headfont   = \bfseries\sffamily,
                     notefont   = \normalfont,
                     bodyfont   = \itshape,
                     spaceabove = 6pt,
                     spacebelow = 6pt]{plain}
\declaretheoremstyle[headfont   = \bfseries\sffamily,
                     notefont   = \normalfont,
                     spaceabove = 6pt,
                     spacebelow = 6pt]{definition}
\declaretheorem[style = plain,  numberwithin = section]{theorem}
\declaretheorem[style = plain,       sibling = theorem]{corollary}
\declaretheorem[style = plain,       sibling = theorem]{lemma}
\declaretheorem[style = plain,       sibling = theorem]{proposition}
\declaretheorem[style = plain,       sibling = theorem]{conjecture}

\declaretheorem[style = definition,  sibling = theorem]{definition}
\declaretheorem[style = definition,  sibling = theorem]{example}
\declaretheorem[style = definition,  sibling = theorem]{properties}
\declaretheorem[style = remark,      sibling = theorem]{remark}

\newcommand{\A}{\mathbb{A}}

\newcommand{\G}{\mathbb{G}}

\newcommand{\N}{\mathbb{N}}
\renewcommand{\P}{\mathbb{P}}
\newcommand{\Q}{\mathbb{Q}}

\newcommand{\T}{\mathbb{T}}

\newcommand{\Z}{\mathbb{Z}}

\newcommand{\cA}{\mathcal{A}}

\newcommand{\cC}{\mathcal{C}}

\newcommand{\cF}{\mathcal{F}}
\newcommand{\cG}{\mathcal{G}}

\newcommand{\cM}{\mathcal{M}}

\newcommand{\cO}{\mathcal{O}}

\newcommand{\rD}{\mathrm{D}}

\newcommand{\rN}{\mathrm{N}}

\newcommand{\gp}{\mathrm{gp}}

\newcommand{\sat}{\mathrm{sat}}
\DeclareMathOperator{\Hom}{Hom}

\DeclareMathOperator{\Spec}{Spec}

\newcommand{\id}{\mathrm{id}}
\newcommand{\ul}{\underline}
\newcommand{\ol}{\overline}

\newcommand{\fib}{\mathrm{fib}}

\newcommand{\ket}{\mathrm{k\acute{e}t}}
\newcommand{\set}{\mathrm{s\acute{e}t}}
\newcommand{\et}{\mathrm{\acute{e}t}}
\newcommand{\h}{\mathrm{h}}
\newcommand{\lh}{\mathrm{lh}}

\newcommand{\letale}{\mathrm{l\acute{e}t}}

\newcommand{\SmlSm}{\mathrm{SmlSm}}
\newcommand{\Sh}{\mathrm{Sh}}

\newcommand{\Sm}{\mathrm{Sm}}
\newcommand{\lSm}{\mathrm{lSm}}

\newcommand{\Sp}{\mathrm{Sp}}
\newcommand{\lSch}{\mathrm{lSch}}
\DeclareMathOperator{\rightrightrightarrows}{\substack{\rightarrow \\[-1em] \rightarrow \\[-1em] \rightarrow}}
\DeclareMathOperator{\rightrightrightrightarrows}{\substack{\rightarrow\\[-1em] \rightarrow \\[-1em] \rightarrow\\[-1em] \rightarrow}}
\newcommand{\eff}{\mathrm{eff}}

\newcommand{\pt}{\mathrm{pt}}
\newcommand{\red}{\mathrm{red}}

\newcommand{\logSH}{\mathrm{logSH}}
\newcommand{\SH}{\mathrm{SH}}

\usepackage{varioref}
\usepackage{hyperref}
\usepackage[capitalize, nameinlink]{cleveref}

\crefname{conjecture}{Conjecture}{Conjectures}

\begin{document}

\author{Nikolai Opdan}
\address{Department of Mathematics, University of Oslo, 
Niels Henrik Abels hus, Moltke Moes vei 35, 0851 Oslo, Norway}
\email{ntmarti@math.uio.no}

\author{Doosung Park}
\address{Bergische Universit{\"a}t Wuppertal,
Fakult{\"a}t Mathematik und Naturwissenschaften
\\
Gau{\ss}strasse 20, 42119 Wuppertal, Germany}
\email{dpark@uni-wuppertal.de}

\author{Paul Arne {\O}stv{\ae}r}
\address{Dipartimento di Matematica ``Federigo Enriques'', 
Universit\`a degli Studi di Milano, Via Cesare Saldini 50, 20133 Milano, Italy} 
\email{paul.oestvaer@unimi.it}
\address{Department of Mathematics, University of Oslo, 
Niels Henrik Abels hus, Moltke Moes vei 35, 0851 Oslo, Norway}
\email{paularne@math.uio.no}

\title{The logarithmic $h$- and $v$-topologies}
\subjclass[2020]{14A21, 14F42}
\keywords{logarithmic $h$-topology, logarithmic $v$-topology, logarithmic $h$-motives}
\begin{abstract}
We introduce the $h$- and $v$-topologies in the context of logarithmic geometry 
and discuss their applications to log étale cohomology, 
log differential forms, 
and log motives.
\end{abstract}
\maketitle

\tableofcontents

\section{Introduction}

The task at hand is to investigate logarithmic versions of the $h$- and $v$-topologies. Our perspective is drawn from the theory of logarithmic motives, as developed in recent works \cite{logDMcras}, \cite{logDM}, and \cite{logSH}. A primary motivation for this study is to analyze log differentials from the viewpoint of the log $h$-topology, similar to the approach taken for algebraic varieties in \cite{ERTL20195285}, \cite{HuberJorder}, and \cite{Huber}.

Voevodsky introduced the $h$-topology in his seminal work on motives during the 1990s \cite{Voevodsky-Homology}. One advantage of the $h$-topology is that any scheme becomes locally smooth, as established by Hironaka's theorem on the resolution of singularities in characteristic zero. The concept of extending invariants on smooth schemes through $h$-hypercoverings dates back to Deligne's definition of mixed Hodge structures for proper varieties \cite{HodgeIII}. This approach has led to diverse applications that highlight the intricate interplay between algebra, geometry, and topology, notably in Suslin-Voevodsky's work on the homology of algebraic varieties \cite{SV}. More recently, in the realm of perfectoid geometry, Bhatt and Scholze \cite{Bhatt-Scholze} introduced the $v$-topology, also known as the universally subtrusive topology. This topology is grounded in the work of Rydh \cite{Rydh}. The $v$-topology is essentially generated by Zariski coverings and quasi-compact morphisms that fulfill a lifting property for specializations. It is finer than the $h$-topology and provides powerful techniques for establishing results in arithmetic geometry. Notably, a general $v$-cover can be viewed as a limit of $h$-covers. The $h$- and $v$-topologies coincide for finite-type morphisms of Noetherian schemes, meaning that étale covers and proper surjections generate these topologies.

Bhatt and Scholze referred to the notion of a $v$-cover to indicate surjectivity concerning valuations on adic spaces \cite[Remark 2.2]{Bhatt-Scholze}. To adapt this concept for log schemes, we begin by discussing valuative monoids in \Cref{section:Valuative monoids}. For our purposes, the critical notion is that of a log valuation ring $(V,P)$, which consists of a valuation ring $V$ and a valuative commutative monoid $P$ (see \cref{df:lvr}). \cref{thm:logvaluationringsliftspecializations} demonstrates that if $X$ is an integral log scheme and $x' \rightsquigarrow x$ represents a specialization of points in $X$, then there exists a log valuation ring $(V,P)$ and a morphism $\Spec{(V,P)} \to X$ such that the specialization $\mathfrak{m} \rightsquigarrow (0)$ lifts $x' \rightsquigarrow x$ for the maximal ideal $\mathfrak{m} \subset V$. The property of lifting specializations of points on log schemes makes the notion of log valuation rings a valuable generalization of valuation rings; for further background, see \cite[Tag 01J8]{stacks-project}.
\vspace{0,1in}

\Cref{section:Log v-covers} introduces and explores fundamental properties of our log $h$- and $v$-topologies within the framework of fine and saturated (fs) log schemes. If $P$ is a valuative monoid and $f: \Delta \to \Sigma$ is a subdivision of fans, \cref{thm:subdivisionsoffans} states that any morphism of monoschemes $g: \Spec{P} \to \Sigma$ induces an isomorphism
\begin{equation}
\label{equation:subdivision}
\Spec{P} \times_{\Sigma} \Delta \overset{\cong}{\to} \Spec{P}
\end{equation}
When $\Spec{P} \cong \Spec{\mathbb{N}}$, the morphism in \eqref{equation:subdivision} corresponds to a subdivision of $\mathbb{N}$ by \cite[Lemma A.3.11]{logDM}, and thus it is indeed an isomorphism. In general, the proof relies on the idea that the dual fan of any valuative monoid is too rigid to accommodate exotic subdivisions. We leverage this reasoning to conclude that \eqref{equation:subdivision} is an isomorphism. By employing the technique of subdivision, we can argue locally. This method enables us to identify quasi-compact log $v$-covers with universally subtrusive morphisms of fs log schemes; see \cref{thm:mainsubtrusive} for a precise formulation. The section concludes with a log version of Zariski's structure theorem, which is of independent interest; see \cref{thm:structuretheorem}.
\vspace{0,1in}

In \cref{section:Examples of log h-sheaves}, we turn our attention to applications of Kummer étale cohomology, log differential forms, and log motives. Our result on log \( v \)-descent for log étale cohomology in \Cref{subsection:Kummer étale cohomology with torsion coefficients} implies that if \( S \) is an fs log scheme equipped with the trivial log structure and \( \cG \) is a torsion sheaf on the small log étale site \( S_{\letale} \), then the presheaf on \( \lSch_{qcqs}/S \) — the category of quasi-compact, quasi-separated fs log schemes over \( S \) — given by 
\[
(\alpha\colon X\to S) \mapsto H^i_{\letale}(X,\alpha^*\cG)
\]
is a log \( v \)-sheaf of abelian groups for all \( i \geq 0 \) (see \Cref{thm:etalecohomology} for a more precise formulation). This descent result serves as a log-geometric analog of the fact that étale cohomology satisfies \( v \)-descent, as seen in \cite[Exposé Vbis]{SGA4} and \cite[Proposition 5.3.3]{CDEtale}. Our proof is not a straightforward generalization of the one for schemes. Through various reduction steps, we show that on the Kummer étale site, every log \( v \)-cover \( f\colon Y \to X \) of fs log schemes satisfies universal descent for the finitary presheaf given by 

\[
(\alpha \colon X \to S) \mapsto R\Gamma_{\ket}(X,\alpha^*\cG)
\]

To achieve this, we employ a localization argument in log étale cohomology for the strict closed immersion \( i\colon\pt\rightarrow\square:= (\mathbb{P}^1,\infty) \) from a point to the logarithmic unit interval and its strict open complement \( j\colon(\mathbb{P}^1-\{0\},\infty)\rightarrow\square \). Another crucial step involves reducing the proof of log \( v \)-descent to strict morphisms and morphisms of fs log schemes whose underlying morphism of schemes is an isomorphism. We also use the observation that dividing covers of fs log schemes, as defined in \cite{logDM}, are special cases of log \( v \)-covers.

Inspired by \cite{HuberJorder}, we study the log \( h \)-sheafification of the sheaf of differential forms over fields of characteristic \( 0 \) in \Cref{subsection:Logarithmic differentials in characteristic 0}. Building on 
\cref{conj:loghdifferantialshomotopyinvariant} and \cref{conj:loghdifferentialsequal}, which we hope to address in future work, we prove in \cref{thm:loghdifferentials} that log \( h \)-sheafification preserves differentials defined on log smooth log schemes. The key component of our proof is the logarithmic Gysin sequence constructed in \cite{logDM}, which facilitates an inductive argument based on the number of irreducible components in the boundary of the log scheme. As shown in \cref{cor:loghsheafificationisaloghsheaf}, this implies that the sheaf of differential forms is indeed a log \( h \)-sheaf on log smooth log schemes over \( k \).

In \Cref{subsection:Logarithmic H-Motives}, we utilize the constructions from \cite{logDMcras}, \cite{logDM}, and \cite{logSH} to define symmetric monoidal stable \( \infty \)-categories of log motives for quasi-compact and quasi-separated (qcqs) base schemes \( S \) with respect to the log \( h \)-topology. As noted in \cref{rmk:loghtopologynofibreproducts}, the log \( h \)-topology does not preserve fiber products of log smooth log schemes. Therefore, to define a category of motives, we consider \( h \)-motives over fine and saturated log schemes, which need not be log smooth. Consequently, the category of logarithmic \( h \)-motives consists of log \( h \)-sheaves on \( \lSch/S \) and involves formally inverting the logarithmic unit interval \( (\mathbb{P}^1, \infty) \). The category of stable logarithmic \( h \)-motives is defined by taking \( \mathbb{P}^1 \)-spectra in this \( \infty \)-category, that is, the localization given by 

\[
\underline{\logSH}_{\lh}(S) := \Sp_{\mathbb{P}^1}(\Sh_{\lh}(\lSch/S_{qcqs},\Sp)[(\mathbb{P}^\bullet,\mathbb{P}^{\bullet-1})^{-1}])
\]

As a consequence of \cref{thm:etalecohomology}, the torsion sheaf \( \mathbb{Z}/n \) is representable in the category of \( S^1 \)-stable logarithmic \( h \)-motives for the simplicial sphere \( S^1 \) and for every integer \( n > 1 \), as demonstrated in \cref{prop:Z/nrepresentable}.
The end of \Cref{subsection:Logarithmic H-Motives} includes some speculations about triangulated categories of log \( h \)- and \( qfh \)-motives over log points.

\addtocontents{toc}{\SkipTocEntry}
\subsection*{Convention}
Our main reference for log geometry is Ogus' monograph \cite{Ogu}. Unless otherwise stated, every log scheme in this paper has a Zariski log structure.

\addtocontents{toc}{\SkipTocEntry}
\subsection*{Acknowledgments}
Our work is supported by the RCN Project no. 312472, titled "Equations in Motivic Homotopy," and The European Commission — Horizon-MSCA-PF-2022 "Motivic integral \( p \)-adic cohomologies." D.P. is supported by the research training group GRK 2240 "Algebro-Geometric Methods in Algebra, Arithmetic and Topology."
We thank to the referee for a detailed report that helped to streamline the exposition.

\section{Log valuation rings}
\label{section:Valuative monoids}
This section introduces the notion of log valuation rings. 
Our main result, see \cref{thm:logvaluationringsliftspecializations}, 
shows that log valuation rings lift specialization of points on log schemes, 
similar to how valuation rings lift specializations of points on schemes, 
see \cite[Tag 01J8]{stacks-project}.

First, 
we recall the notion of valuative monoids, 
an analog of valuation rings for monoids. 
A monoid $M$ is a set with an associative, distributive binary operation with a unit. We assume that our monoids are commutative.
Recall that if $A$ is a valuation ring with maximal ideal $m$ and fraction field $K$, then $x \in K$ implies that either $x \in A$ or $-x \in A$.

\begin{definition}
\cite[§I.2.4]{Ogu}
A \emph{valuative monoid} is an integral monoid \(P\) such that for any \(a \in P^{\gp},\) we have either \(a \in P\) or \(-a \in P.\) 
A \textit{discrete valuative monoid} is a valuative monoid such that $\overline{P} := P/P^* \cong \N.$
\end{definition}

\begin{example}
\begin{enumerate}
\item[(i)] The monoid $\N$ is a discrete valuative monoid.
\item[(ii)] Every abelian group
is a valuative monoid. 
For any ordered group \(G,\) the set of non-negative (resp.\ non-positive) elements 
\(G_{\geq 0}\) (resp. \(G_{\leq 0}\)) is a valuative monoid.
\item[(iii)] Let $A$ be a valuation ring. 
Its set of nonzero elements $A-\{0\}$ is a valuative monoid with 
$(A-\{0\})^{\gp} = K^\times$, where $K$ is the field of fractions of $A$.
\end{enumerate}
\end{example}

\begin{properties}
The following facts are straightforward generalizations of properties concerning valuation rings; see \cite[§I.2.4]{Ogu}.
\begin{enumerate}
\item[(i)] For any integral monoid \(M\), there exists a valuative monoid \(P \subset M^{\gp}\) containing \(M\) such that \(M^* = M \cap P^*,\) where \((-)^*\) denotes the subgroup of \((-)\) consisting of all invertible elements.
\item[(ii)] A monoid \(P\) is valuative if and only if it is a maximal element in the set of submonoids of \(P^{\gp}\) for the partial ordering
\[Q \leq Q' \Leftrightarrow \big( Q \subset Q' \text{ and } Q^* = Q\cap (Q')^* \big).\]
\item[(iii)] The saturation \(P^{\sat}\) of a monoid \(P\) coincides with the intersection of all valuative monoids \(V \subset P^{\gp}\) containing \(P\) such that \(P^* = P \cap V^*.\) This implies that all valuative monoids are saturated; an analog of the fact that a valuation ring $A$ is maximal with respect to the relation of being dominant among all local rings contained in the fraction field of $A$.
\item[(iv)] 
If $H \subset G$ is an inclusion of abelian groups, and $P \subset G$ is a valuative monoid, then $P \cap H$ is a valuative monoid.
\item[(v)] If $P$ is a valuative monoid and $\mathfrak{p}$ is an ideal, then the localization $P_\mathfrak{p}$ is valuative.
\end{enumerate}
\end{properties}

Recall that a monoid $M$ is integral if the inclusion $M \to M^{\gp}$ is injective. 
One technical difficulty in log geometry is that the tensor product of integral monoids 
may not be integral. 
We consider homomorphisms of integral monoids $P \to Q$ having the property that for any other 
homomorphism of integral monoids $P \to P'$, 
the pushout $Q \oplus_P P' = Q \otimes_P P'$ is integral.
Such homomorphisms are called \emph{integral}.

\begin{lemma}\label{lem:ValuativeIntegral}
If $P$ is valuative, 
then every homomorphism of integral monoids $P \to Q$ is integral.
\end{lemma}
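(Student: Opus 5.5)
Our plan is to use the standard criterion for integrality of a homomorphism of integral monoids (Kato's criterion, cf.\ \cite[Proposition I.4.6.3]{Ogu}). A homomorphism $\theta\colon P\to Q$ of integral monoids is integral if and only if the following holds. Whenever $a_1,a_2\in P$ and $b_1,b_2\in Q$ satisfy $\theta(a_1)+b_1=\theta(a_2)+b_2$, there exist $a_3,a_4\in P$ and $b\in Q$ with
\[
b_1=\theta(a_3)+b,\qquad b_2=\theta(a_4)+b,\qquad a_1+a_3=a_2+a_4 .
\]
Once this criterion is available, the lemma reduces to a short computation that uses only the valuative property of $P$.

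So fix such $a_1,a_2,b_1,b_2$ and consider $a_2-a_1\in P^{\gp}$. Since $P$ is valuative, either $a_2-a_1\in P$ or $a_1-a_2\in P$, and by symmetry we may assume the first. Set $c:=a_2-a_1\in P$, so that $a_2=a_1+c$. The equation becomes $\theta(a_1)+b_1=\theta(a_1)+\theta(c)+b_2$. Since $Q$ is integral, it is cancellative, and we get $b_1=\theta(c)+b_2$. Now take $b:=b_2$, $a_3:=c$ and $a_4:=0$. Then $b_1=\theta(a_3)+b$ and $b_2=\theta(a_4)+b$, and $a_1+a_3=a_1+c=a_2=a_2+a_4$. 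The criterion is therefore satisfied, and $\theta$ is integral.

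The main obstacle is justifying the criterion, if one does not simply cite it. We would instead argue directly on pushouts. Let $P\to P'$ be a homomorphism with $P'$ integral. We must show that the natural map $Q\oplus_P P'\to Q^{\gp}\oplus_{P^{\gp}}P'^{\gp}$ is injective. Suppose $(b,p')$ and $(b',q')$ have the same image. Then there is $g\in P^{\gp}$ with $b-b'=\theta(g)$ and $q'-p'=g$ in the groups. Using the valuative property, write $g=a$ or $g=-a$ with $a\in P$; by symmetry assume $g=a$. Then $b=b'+\theta(a)$ in $Q$ by integrality of $Q$, and $q'=p'+a$ in $P'$ by integrality of $P'$. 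Hence $(b,p')=(b'+\theta(a),p')\sim(b',p'+a)=(b',q')$ in the pushout, so the two elements already agree there. This gives injectivity, i.e.\ the pushout embeds into a group and is integral, which is exactly what we need.

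The only subtle point is the description of the pushout in the category of integral monoids versus all monoids. The computation above handles this by working with the honest monoid pushout, whose elements are equivalence classes of pairs, and comparing it to the group pushout.
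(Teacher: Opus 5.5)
Your proposal is correct. It differs from the paper in that the paper gives no argument of its own: it just cites \cite[Proposition I.4.6.3 (5)]{Ogu}.

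Your first paragraph is the natural proof behind that citation. It reduces to Kato's equational criterion for integrality and checks it using the valuative dichotomy ($a_2-a_1\in P$ or $a_1-a_2\in P$) together with cancellation in $Q$. The computation is right, and the symmetry reduction is legitimate.

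Your second paragraph is more self-contained, and it makes the criterion unnecessary. It works directly from the definition of integral homomorphism used in the paper. You show that the monoid pushout $Q\oplus_P P'$ maps injectively into the group $Q^{\gp}\oplus_{P^{\gp}}P'^{\gp}$, and is therefore cancellative, i.e.\ integral. The argument is complete:
\begin{itemize}
\item every element of the pushout is the class of a pair;
\item for any two pairs with the same image, you produce a single elementary relation joining them, using only the dichotomy $g\in P$ or $-g\in P$ and the integrality of $Q$ and $P'$.
\end{itemize}

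There is one notational slip. Writing $\phi\colon P\to P'$ for the structure map, the group relation should read $q'-p'=\phi^{\gp}(g)$, and the resulting equality in $P'$ should read $q'=p'+\phi(a)$. This does not affect the argument.

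The citation buys brevity. Your direct version buys a proof that can be read without Ogus, and it makes transparent exactly which hypotheses are used.
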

\begin{proof}
See \cite[Proposition I.4.6.3 (5)]{Ogu}.
\end{proof}

Recall from \cite[Definition I.4.3.1]{Ogu} that a homomorphism of monoids $\phi \colon P \to Q$ is called \textit{Kummer} if it is injective and \(\phi \otimes \Q \colon P \otimes \Q \to Q \otimes \Q\) is surjective, i.e., if it is injective and for all $q \in Q$ there exists $n \in \N$ and $p \in P$ such that $q = n\phi(p)$.

Recall that a homomorphism of integral monoids $\phi \colon P \to Q$ is:
\begin{enumerate}
\item[(i)] \emph{Logarithmic} if the induced homomorphism $\phi^{-1}(Q^*) \to Q^*$ is an isomorphism \cite[Definition I.4.1.1(3)]{Ogu}.
\item[(ii)] \emph{Flat} if for every functor $F$ from a finite connected category to the category of $P$-sets, the natural homomorphism
\[(\lim F) \otimes_P Q \to \lim(F \otimes_P Q)\]
is an isomorphism \cite[Definition I.4.5.2]{Ogu}.
\item[(iii)] \emph{Local} if $\phi^{-1}(Q^*) = P^*$ \cite[Definition I.4.1.1 (1)]{Ogu}.
\item[(iv)] \emph{Exact} if the induced square
\begin{center}
\begin{tikzcd}
P \arrow[r] \arrow[d] & Q \arrow[d] \\
P^{\gp} \arrow[r] & Q^{\gp}
\end{tikzcd}
\end{center}
is cartesian \cite[Definition I.2.1.15]{Ogu}, i.e., 
if there is an isomorphism $ P \cong P^{\gp} \times_{Q^{\gp}} Q.$
\item[(vi)] \textit{Locally exact} if for every face $G$ of $Q$, the localized homomorphism of monoids $P_{\phi^{-1}(G)} \to Q_G$ is exact \cite[Definition I.4.2.12]{Ogu}.
\end{enumerate}

An exact homomorphism is local \cite[Proposition I.2.1.16 (2)]{Ogu}, and if $P$ is valuative and $f$ local, then it is exact \cite[Proposition I.4.2.1 (4)]{Ogu}. A locally exact morphism is locally surjective. 

\begin{lemma}\label{lem:locallyexact}
A homomorphism $\theta \colon P \to Q$ of monoids is locally exact if one of the following conditions is satisfied:
\begin{enumerate}
\item[(i)] $\theta$ is flat.
\item[(ii)] $P$ is valuative.
\item[(iii)] $Q$ is a group.
\end{enumerate}
\end{lemma}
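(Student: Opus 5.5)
The proof goes through the three cases separately. Throughout we use the definition: $\theta$ is locally exact if for every face $G\subset Q$, with $F:=\theta^{-1}(G)$ (a face of $P$), the localized map $\theta_G\colon P_F\to Q_G$ is exact. We also use that exactness is equivalent to: whenever $p\in P^{\gp}$ has $\theta^{\gp}(p)\in Q$, then $p\in P$.

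\emph{Case (ii): $P$ valuative.} By Properties (v), $P_F$ is again valuative. By the remark before the lemma, a local homomorphism out of a valuative monoid is exact. So it suffices to show that $\theta_G\colon P_F\to Q_G$ is local. Its units are $(Q_G)^*=G^{\gp}$ and $(P_F)^*=F^{\gp}$. Suppose $x=p-f\in P_F$ with $p\in P$, $f\in F$, and $\theta(p)-\theta(f)\in G^{\gp}$. Since $\theta(f)\in G$, we get $\theta(p)\in G^{\gp}\cap Q$. This intersection equals $G$, because $G$ is a face: if $a-b\in Q$ with $a,b\in G$, then $(a-b)+b\in G$ forces $a-b\in G$. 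Hence $p\in\theta^{-1}(G)=F$, so $x\in F^{\gp}$. Thus $\theta_G$ is local, and therefore exact.

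\emph{Case (iii): $Q$ a group.} The only face of $Q$ is $Q$ itself. Then $F=\theta^{-1}(Q)=P$, and $P_F=P^{\gp}$, $Q_G=Q$. The map $P^{\gp}\to Q$ is a homomorphism from a group, so it is trivially exact: the fibre product $P^{\gp}\times_{Q^{\gp}}Q$ equals $P^{\gp}$. Here we assume $P$ integral, as in the standing setup; otherwise we replace $P$ by its integral quotient, which does not change $P_F$ as computed inside $P^{\gp}$.

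\emph{Case (i): $\theta$ flat.} This is the substantive case, and I expect it to be the main obstacle. My plan is to reduce to the known fact that flat homomorphisms of integral monoids are exact, which I would quote from \cite[Proposition I.4.5.3]{Ogu} or prove directly. Flatness is preserved by localization: $P_F\to Q_G$ is a base change of $P\to Q$ along $P\to P_F$, followed by a localization of $Q\otimes_P P_F$, and localizations are filtered colimits, hence flat. So each $\theta_G$ is flat, and it remains to show that a flat $\theta$ is exact.

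For that, take $p=a-b\in P^{\gp}$ with $\theta(a)=\theta(b)+q$ for some $q\in Q$. Consider the $P$-sets $P\cdot a$ and $P$, viewed as ideals in $P$, together with their intersection $(a+P)\cap(b+P)$, which is a finite connected limit. Flatness says tensoring with $Q$ commutes with this intersection. The element $\theta(a)$ lies in $(a+Q)\cap(b+Q)$ inside $Q$, so it comes from $((a+P)\cap(b+P))\otimes_P Q$. That means $\theta(a)=\theta(c)+q'$ with $c\in(a+P)\cap(b+P)$. Writing $c=b+r$ with $r\in P$ and $c=a+s$ with $s\in P$, cancellation in $Q^{\gp}$ and the original equation give $p=r-s$. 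Showing that $p$ actually lies in $P$ needs more care. I would instead use the equalizer of the two maps $P\rightrightarrows P^{\gp}$ given by translation by $a$ and by $b$, a finite connected diagram of $P$-sets, and follow Ogus's argument. If that bookkeeping gets heavy, I would simply cite Ogus's statement that flat implies locally exact.
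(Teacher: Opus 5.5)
Your cases (ii) and (iii) are correct, and they are more elementary than the paper's proof. The paper handles all three cases at once: it shows $\theta$ is integral (via \cite[Proposition I.4.6.7]{Ogu} for flat maps and \cite[Proposition I.4.6.3 (5)]{Ogu} otherwise), then uses that integral homomorphisms are locally exact \cite[Proposition I.4.6.3 (4)]{Ogu}. Your direct check that each $\theta_G$ is local, hence exact when $P_F$ is valuative, avoids integrality entirely. So does your observation that a group has no face other than itself.

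\textbf{The gap in (i).} The lemma you reduce to, that a flat homomorphism of integral monoids is exact, is false. The localization $\N\to\N^{\gp}=\Z$ is flat, and you yourself use that localizations are flat. But it is not exact, since $-1\in\N^{\gp}$ maps into $\Z$ without lying in $\N$. This is exactly why your computation stalls at $p=r-s$. The proposed fallback does not help either: for $a\neq b$ in an integral monoid, the equalizer of the two translations $P\rightrightarrows P^{\gp}$ is empty.

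\textbf{The missing ingredient is locality.} For any homomorphism $\theta$ of integral monoids and any face $G$, the map $\theta_G\colon P_F\to Q_G$ is local. Your argument in (ii), that $\theta(p)\in G^{\gp}\cap Q=G$ forces $p\in F$, never used valuativity. So what you actually need is that a flat \emph{local} homomorphism is exact. With locality, your intersection-of-ideals argument closes:
\begin{enumerate}
\item[(1)] From $c=a+s$ and $\theta(a)=\theta(c)+q'$, cancellation in $Q^{\gp}$ gives $\theta(s)+q'=0$, so $\theta(s)\in Q^*$.
\item[(2)] Locality gives $s\in P^*$.
\item[(3)] Hence $p=a-b=r-s\in P$.
\end{enumerate}
Run this with $\theta_G$ in place of $\theta$, which you correctly showed is flat. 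Alternatively, follow the paper's route: flat $\Rightarrow$ integral $\Rightarrow$ locally exact.
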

\begin{proof}
If $\theta$ is integral, then it is locally exact by \cite[Proposition I.4.6.3 (4)]{Ogu}. Then (i) follows from \cite[Proposition I.4.6.7]{Ogu}, and (ii) follows from \cref{lem:ValuativeIntegral} and (iii) follows from \cite[Proposition I.4.6.3 (5)]{Ogu}. 
\end{proof}

The ``going down'' theorem for rings \cite[Tag 00HS]{stacks-project} takes the following form for monoids.
 
\begin{proposition}[Going down for monoids]\label{prop:goingdown}
Let $\theta \colon P \to Q$ be a locally exact homomorphism of integral monoids. Let $\mathfrak{p}' \subset \mathfrak{p}$ be a chain of prime ideals in $P$ and $\mathfrak{q} \subset Q$ a prime ideal such that $\theta^{-1}(\mathfrak{q}) = \mathfrak{p}.$ Then there exists a prime ideal $\mathfrak{q}' \subset Q$ contained in $\mathfrak{q}$ which lifts $\mathfrak{p}'$.
\end{proposition}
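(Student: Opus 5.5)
The strategy is to pass to the face of $Q$ complementary to $\mathfrak{q}$, use local exactness there, and then pull back the prime $\mathfrak{p}'$ along the localized map.

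Recall the standard dictionary for a monoid $M$: prime ideals $\mathfrak{r}\subset M$ correspond bijectively to faces $F = M\setminus \mathfrak{r}$. Localization $M_F$ has prime ideals corresponding to primes of $M$ contained in $\mathfrak{r}$, and the unit group satisfies $M_F^* = F^{\gp}$. The proof proceeds in three steps.

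\textbf{Step 1: localize.} Set $G := Q\setminus \mathfrak{q}$, a face of $Q$. Its preimage is $\theta^{-1}(G) = P\setminus \mathfrak{p} =: F$, a face of $P$. By local exactness the localized map $\theta_G \colon P_F \to Q_G$ is exact. Primes of $Q$ contained in $\mathfrak{q}$ correspond to primes of $Q_G$, and primes of $P$ contained in $\mathfrak{p}$ correspond to primes of $P_F$. These correspondences are compatible with taking preimages. So it suffices to treat the case where $\mathfrak{q}$ and $\mathfrak{p}$ are the maximal ideals $Q\setminus Q^*$ and $P\setminus P^*$, and $\theta$ is exact, hence local.

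\textbf{Step 2: reduce to a surjectivity statement.} In this situation we must show that every face $F'$ of $P$ (namely $F' = P\setminus\mathfrak{p}'$) equals $\theta^{-1}(G')$ for some face $G'$ of $Q$; then $\mathfrak{q}' := Q\setminus G'$ is the required prime. The natural candidate is the smallest face containing $\theta(F')$:
\[
G' := \{ q\in Q : q + q_1 = \theta(f) \text{ for some } q_1\in Q,\ f\in F'\}.
\]
This set is a face of $Q$, and clearly $F' \subset \theta^{-1}(G')$.

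\textbf{Step 3: the reverse inclusion.} Suppose $p\in P$ with $\theta(p)\in G'$, say $\theta(p)+q_1=\theta(f)$ with $f\in F'$. Pass to the localizations at $F'$ and at $G'$. There $\theta(f)$ becomes a unit, so $\theta(p)$ becomes a unit of $Q_{G'}$. By local exactness at the face $G'$, the map $P_{\theta^{-1}(G')} \to Q_{G'}$ is exact and hence local, which yields $p\in \theta^{-1}(G')$. However, this does not yet give $p\in F'$.

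The main obstacle is exactly this point: showing $\theta^{-1}(G') = F'$, rather than merely some face containing $F'$. I would argue via exactness of the map from Step 1. Since $\theta(p) + q_1 = \theta(f)$, we have $\theta(f-p) = q_1 \in Q$ inside $Q^{\gp}$, with $f - p \in P^{\gp}$. Exactness then gives $f-p\in P$, so $f = p + (f-p)$ with both summands in $P$. Because $F'$ is a face and $f\in F'$, it follows that $p\in F'$.

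This closes the argument: only exactness after localizing at the face $G$ complementary to $\mathfrak{q}$ is used, and that is exactly what the hypothesis of local exactness supplies.
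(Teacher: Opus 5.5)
Your argument is correct and is essentially the paper's proof. Localizing at the face $Q\setminus\mathfrak{q}$ makes $P_{\mathfrak{p}}\to Q_{\mathfrak{q}}$ exact, surjectivity of $\Spec$ of that map gives a prime lifting $\mathfrak{p}'+P_{\mathfrak{p}}$, and pulling it back to $Q$ yields $\mathfrak{q}'\subset\mathfrak{q}$. The only difference is that you prove the surjectivity by hand (by exactness, the face generated by $\theta(F')$ pulls back to $F'$), which is what the paper takes from \cite[Proposition I.4.2.2]{Ogu}; the first half of your Step~3 (localizing at $G'$) is a tautological detour and can be deleted.
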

\begin{proof}
It follows from \cite[Proposition I.4.2.2]{Ogu} that a locally exact homomorphism of integral monoids is locally surjective, hence for every point $x \in \Spec{Q}$, corresponding to a prime ideal $\mathfrak{q} \subset Q$, the map of topological spaces $\Spec{\theta}_{x} \colon \Spec{Q_{\mathfrak{q}}} \to \Spec{P_\mathfrak{p}}$ is surjective. Therefore, if $\mathfrak{p}' \subset \mathfrak{p}$ is a chain of prime ideals in $P$, there exists a prime $\mathfrak{q}'$ of $Q_\mathfrak{q}$ lifting the prime ideal $\mathfrak{p}'+P_\mathfrak{p}$ of $P_\mathfrak{p}.$ The inverse image of $\mathfrak{q}'$ under the localization $Q \to Q_{\mathfrak{q}}$ is the prime ideal we are looking for.
\end{proof}

Working with valuative monoids we have a simpler description of faithfully flat homomorphisms. 

\begin{proposition}[Characterization of faithfully flat homomorphisms] \label{prop:Faithfullyflatcharacterization}
Let $\theta\colon P\to Q$ be a homomorphism of integral monoids.
Assume that $P$ is valuative and $P^\gp$ is torsion free.
Then $\theta$ is
injective and local if and only if it is faithfully flat, i.e., it is flat and $\Spec{\theta}$ is surjective.
\end{proposition}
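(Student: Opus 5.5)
We prove both implications separately, relying on Ogus' criteria for flatness of monoid homomorphisms together with the rigidity of valuative monoids.

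\emph{Faithfully flat $\Rightarrow$ injective and local.} Suppose $\theta$ is flat and $\Spec\theta$ is surjective. Locality is the easy half. The empty ideal and the maximal ideal $P\setminus P^*$ are both primes of $P$, so the maximal ideal is hit by some prime $\mathfrak{q}\subset Q$. Since $\mathfrak{q}\subset Q\setminus Q^*$, we get
\[
\theta^{-1}(Q^*)\subset P\setminus\theta^{-1}(\mathfrak{q})=P^*.
\]
The reverse inclusion is automatic, so $\theta$ is local. For injectivity, I would use that a flat homomorphism of integral monoids is integral, by \cite[Proposition I.4.6.7]{Ogu}; in particular it is locally exact by \cref{lem:locallyexact}. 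Being local, it is then exact, because $P$ is valuative, as recalled just before \cref{lem:locallyexact}. Exactness gives $P\cong P^{\gp}\times_{Q^{\gp}}Q$. Hence the kernel $K$ of $\theta^{\gp}$ satisfies $K\subset P$, and since $K$ is a group, $K\subset P^*$. So it suffices to show that $\theta$ is injective on $P^*$. Here I would invoke flatness against the equalizer of the two maps $P\rightrightarrows P$ given by $\id$ and translation by $u\in K$. The equalizer is empty when $u\neq 0$, since $P$ is integral. After applying $-\otimes_P Q$, the two maps become equal, so the equalizer becomes all of $Q\neq\emptyset$. This contradicts flatness, so $K=0$.

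\emph{Injective and local $\Rightarrow$ faithfully flat.} Surjectivity of $\Spec\theta$ is the first step. By \cref{lem:locallyexact}(ii), $\theta$ is locally exact because $P$ is valuative. The maximal ideal of $P$ pulls back from the maximal ideal of $Q$, since $\theta^{-1}(Q^*)=P^*$. The primes of $P$ form a chain, so \cref{prop:goingdown} applied to this chain lifts every prime of $P$. Flatness is the second step. I would use Ogus' criterion \cite[Proposition I.4.6.7 and Theorem I.4.6.?]{Ogu}: an integral homomorphism $\theta$ with $\theta^{\gp}$ injective is flat, and integrality holds here by \cref{lem:ValuativeIntegral}. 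What remains is to check that $\theta^{\gp}$ is injective. Since $\theta$ is local and $P$ is valuative, $\theta$ is exact, and the argument above gives $\ker\theta^{\gp}\subset P^*$. Because $\theta$ is injective, $\ker\theta^{\gp}=0$. If the precise criterion in Ogus requires a different hypothesis (e.g.\ that $Q$ be a free $P$-set locally), the fallback is to reduce to the case $\overline P$ and note that the torsion-freeness of $P^{\gp}$ makes $P^*\to Q^*$ a flat map of groups, i.e.\ injective, and then pass to the quotient.

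The hard step is flatness in this converse direction, in particular pinning down the exact flatness criterion in Ogus (integral together with $\theta^{\gp}$ injective, or a ``$\theta$ is $P$-free'' criterion). The hypothesis that $P^{\gp}$ is torsion free is presumably used exactly there: it guarantees that the group part $P^*\to Q^*$, or more generally $P^{\gp}\to Q^{\gp}$ modulo torsion, behaves well. I would try to derive flatness from the integrality and injectivity of $\theta^{\gp}$ first.
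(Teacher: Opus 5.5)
Your proof is correct. The implication ``injective and local $\Rightarrow$ faithfully flat'' follows the paper's route. Integrality comes from valuativity of $P$, and flatness comes from Ogus' criterion for integral injective homomorphisms; the paper cites \cite[Proposition I.4.6.8]{Ogu} after noting $\theta$ is integral and exact. Surjectivity of $\Spec\theta$ comes from local exactness: you pass through \cref{prop:goingdown}, while the paper cites \cite[Proposition I.4.2.2]{Ogu} directly. Injectivity of $\theta^\gp$ follows at once from injectivity of $\theta$ because $Q$ is integral, so neither your exactness detour nor your fallback is needed there. Locality in the other direction is also the paper's argument.

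The genuine difference is how you show that a faithfully flat $\theta$ is injective. The paper passes to monoid algebras: $\Z[\theta]$ is a flat ring map by \cite[Proposition I.4.5.12]{Ogu}, $\Z[P]$ is a domain, and $\Z[Q]\neq 0$. This is the only place the torsion-freeness of $P^\gp$ is used. You instead test the definition of flatness on an equalizer, which stays inside monoid theory and needs no torsion-freeness. Your preliminary step (integrality, local exactness, exactness, to get $\ker\theta^\gp\subset P^*$) can be dropped. If $\theta(p)=\theta(p')$ with $p\neq p'$, the equalizer of translation by $p$ and by $p'$ on $P$ is empty since $P$ is integral. After $-\otimes_P Q$, both maps become translation by $\theta(p)$ on $Q$, whose equalizer is $Q\neq\emptyset$. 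So your route proves the proposition without assuming $P^\gp$ torsion free. Your guess that this hypothesis is needed for flatness in the converse is therefore mistaken: the paper uses it only to make $\Z[P]$ a domain.
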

\begin{proof}
If $\theta$ is injective and local,
then $\theta$ is integral and exact by \cite[Proposition I.4.6.3 (4),(5)]{Ogu}.
Hence $\theta$ is flat by \cite[Proposition I.4.6.8]{Ogu},
and $\Spec(\theta)$ is surjective by \cite[Proposition I.4.2.2]{Ogu}.

Conversely, if $\theta$ is flat and $\Spec(\theta)$ is surjective, 
the homomorphism of monoid algebras $\Z[\theta] \colon \Z[P] \to \Z[Q]$ 
is a flat ring homomorphism \cite[Proposition I.4.5.12]{Ogu}. 
Thus, 
$\Z[\theta]$ is injective since $\Z[P]$ is an integral domain and $\Z[Q]\neq 0$.
Hence $\theta$ is injective.
Since $\Spec(\theta)$ is surjective,
there exists a prime ideal $\mathfrak{q}$ of $Q$ such that $\theta^{-1}(\mathfrak{q})=P^+$.
This implies $\theta^{-1}(Q^+)=P^+$,
i.e., $\theta$ is local.
\end{proof}

Next we introduce the notion of log valuation rings. 
Recall that a \emph{log ring} is a pair $(A, M)$ 
with a homomorphism of monoids $\beta \colon M \to (A,\times)$, 
where $(A,\times)$ is the underlying multiplicative monoid of $A$, 
see \cite[Definition III.1.2.3]{Ogu}.

\begin{definition}
\label{df:lvr}
\begin{enumerate}
\item A \emph{log valuation ring} is a log ring $(V,P)$ such that $V$ is a valuation ring, 
$P$ is a valuative monoid, and $P\to V$ is logarithmic.
\item A \textit{discrete} log valuation ring is a log valuation ring $(V,P)$ such that $V$ 
is a discrete valuation ring and $P$ is a discrete valuative monoid.
\end{enumerate}
\end{definition}

\begin{definition}
For a monoid $P$,
let $\A_{P}$ denote the log scheme associated to the log ring $P \to \Z[P].$ 
For a sharp fs monoid $Q$, 
we define $\pt_{Q}$ to be the fs log scheme $$\pt_{Q}:=\A_Q\times_{\ul{\A_Q}}\{O\}$$ 
where $O$ denotes the origin of $\A_Q$.
For a ring $R$, 
we set $$\A_{P,R}:=\Spec{R}\times \A_P$$ and $\pt_{Q,R}:=\Spec{R}\times \pt_Q$,
where $\times$ means the fiber product over $\Spec{\Z}$.
\end{definition}

\begin{example}
\label{exm:log point}
Let $k$ be a field.
Then every fs log scheme $X$ whose underlying scheme is $\Spec{k}$ is isomorphic to $\pt_{P,k}$ for some sharp fs monoid $P$.
\end{example}

\begin{example}
The log ring $(k,k^*\oplus \N)$ for a field $k$ as in \cite[Example III.1.5.2]{Ogu} 
is a log valuation ring whose spectrum is the standard log point $\pt_\N$.
\end{example}

Recall from \cite[Tag 0ASG]{stacks-project} that a homomorphism $V \to W$ of valuation rings is called an \emph{extension of valuation rings} if it is injective and local. 

\begin{definition}
An \emph{extension of log valuation rings} $\theta \colon (V,P) \to (W,Q)$ is a 
homomorphism of log rings $(V,P) \to (W,Q)$ such that $V \to W$ is an
extension of valuation rings.
\end{definition}

Every valuation ring \(V\) gives a log valuation ring \((V, V-\{0\}).\)
If \((A,M)\) is a log ring, we define a log scheme \(\Spec(A, M)\) whose underlying scheme is 
$\Spec{A}$ and log structure is associated with the prelog structure \(M \to \mathcal{O}_{\Spec{A}}\) 
induced by the homomorphism $M \to A$ \cite[Definition III.1.2.3]{Ogu}. 

\begin{proposition}
The functor
\[\{
\mathrm{Log}
\text{ }
\mathrm{valuation}
\text{ }
\mathrm{rings}\}^\mathrm{op}
\longrightarrow
\{
\mathrm{Log}
\text{ }
\mathrm{schemes}
\}\]
given by $(V,P)\mapsto \Spec{(V,P)}$ is fully faithful.
\end{proposition}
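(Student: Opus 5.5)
The plan is to show that for log valuation rings $(V,P)$ and $(W,Q)$, the map
\[\Hom_{\text{log rings}}((V,P),(W,Q)) \to \Hom_{\text{log schemes}}(\Spec{(W,Q)},\Spec{(V,P)})\]
is a bijection. The inverse will be built by taking global sections. The essential input is that a log valuation ring can be recovered from its spectrum: $X=\Spec{(V,P)}$ should satisfy $\Gamma(X,\cO_X)=V$ and $\Gamma(X,\cM_X)=P$, compatibly with the structure maps. Only two hypotheses will actually be used: that $V$ is local, and that $P\to V$ is logarithmic. Valuativity of $P$ plays no role.

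\textbf{Step 1 (underlying schemes).} Morphisms of affine schemes $\Spec{W}\to\Spec{V}$ correspond bijectively to ring homomorphisms $V\to W$, so the scheme part is handled by the usual anti-equivalence. It remains to treat the monoid part.

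\textbf{Step 2 (global sections of the log structure).} This is the main point. Let $x$ be the unique closed point of $X=\Spec{V}$. Any open subset containing $x$ is all of $X$. Since we work with Zariski log structures, this gives $\Gamma(X,\cM_X)=\cM_{X,x}$.

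The stalk of the log structure associated to the prelog structure $\beta\colon P\to V$ is the pushout
\[
\cM_{X,x}=P\oplus_{\beta^{-1}(\cO_{X,x}^*)}\cO_{X,x}^* = P\oplus_{\beta^{-1}(V^*)}V^*,
\]
using $\cO_{X,x}=V$. Because $\beta$ is logarithmic, the map $\beta^{-1}(V^*)\to V^*$ is an isomorphism, so the canonical map $P\to\cM_{X,x}$ is an isomorphism. Under this isomorphism $\alpha_X$ on global sections becomes $\beta$. The same argument applies to $(W,Q)$. I expect this computation, together with the care needed over the Zariski convention and the pushout formula for the stalk (computed in the category of monoids, with no integrality issues since it is just $P$), to be the only real obstacle.

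\textbf{Step 3 (the bijection).} Given a morphism of log schemes $(f,f^\flat)\colon\Spec{(W,Q)}\to\Spec{(V,P)}$, take global sections of $f^\sharp\colon \cO_X\to f_*\cO_Y$ and of the adjoint $\cM_X\to f_*\cM_Y$ of $f^\flat$. By Step 2 this yields a ring map $V\to W$ and a monoid map $P\to Q$. These are compatible with $\beta$ and $\gamma\colon Q\to W$ because $f^\flat$ is compatible with the maps $\alpha$, so we obtain a homomorphism of log rings.

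Conversely, a log ring map $(V,P)\to(W,Q)$ induces a morphism of prelog structures $P\to \Gamma(Y,\cM_Y)$ over $\cO_Y$. By the universal property of the associated log structure, this gives a unique map $f^*\cM_X\to\cM_Y$, hence a morphism of log schemes.

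The two constructions are mutually inverse. Starting from a log ring map, Step 2 shows we recover the same map. Starting from a morphism of log schemes, $f^\flat$ is determined by its restriction to the chart $P$, since $f^*\cM_X$ is the log structure associated to $P$. So it agrees with the map reconstructed from global sections.

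Finally, compatibility with composition is clear. Hence the functor is fully faithful.
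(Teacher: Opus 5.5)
Your proposal is correct and follows the paper's route. The paper's proof consists of exactly your Step 2, the observation that $P\to\Gamma(\Spec{(V,P)},\cM_{\Spec{(V,P)}})$ is an isomorphism, and then defers the remaining bookkeeping (your Step 3) to an argument as in \cite[Proposition 7.7]{BLPO}, which you have written out explicitly.
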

\begin{proof}
For a log valuation ring $(V,P)$,
observe that the induced homomorphism
\[
P\to \Gamma(\Spec{(V,P)},\cM_{\Spec{(V,P)}})
\]
is an isomorphism.
From this point on, one can argue as in \cite[Proposition 7.7]{BLPO}.
\end{proof}

\begin{example}\cite[Example III.1.5.6]{Ogu}
\label{exm:logvaluationring}
If $V$ is a discrete valuation ring, then the log scheme corresponding to the log ring $(V, V-\{0\})$, where $V-\{0\}$ is seen as the multiplicative monoid of $V,$ is called a \emph{standard log dash}, and is an example of a log valuation ring.
The sheaf of monoids $\mathcal{M}_{(\Spec{V,V-\{0\}})}$ can be described explicitly as follows: The underlying scheme $\Spec{V}$ has only two points, 
the unique closed point $\mathfrak{m}$ and $(0)$ the unique open point, 
so there is a commutative diagram
\begin{center}
 \begin{tikzcd}
 V-\{0\} \ar[d] \ar[r, "="] & \mathcal{M}_{V,\mathfrak{m}} \ar[r] \ar[d] & \mathcal{O}_{V, \mathfrak{m}} = V \ar[d] \\
 K^\ast \ar[r, "="] & \mathcal{M}_{V, (0)} \ar[r] & \mathcal{O}_{V,(0)} = K
 \end{tikzcd}
\end{center}
where $K$ denotes the fraction field of $V$. 
The log structure of $(V, V-\{0\})$ may also be described as the compactifying log structure 
for the inclusion of the open immersion $\Spec K \to \Spec V$.
\end{example}

Recall that if $x,x' \in X$, then $x$ is a \emph{specialization} of $x'$
(or $x'$ is a \emph{generalization} of $x$) if $x \in \ol{\{x'\}}.$ 
In this case, we write $x \leq x'$. 
A continuous map of topological spaces
$f \colon Y \to X$ \emph{lifts specializations} if for every specialization $x \leq x'$ in $X$ and point $y' \in Y$ such that $f(y') = x'$ there is a specialization $y\leq y'$ in $Y$ such that $f(y) = x.$ Dually, a continuous morphism $f \colon Y \to X$ \emph{lifts generalizations} if for every generalization $x \leq x'$ and a point $y$ in $Y$ there is a generalization $y \leq y'$ such that $f(y') = x'.$ A map of topological spaces is \emph{stable under generalizations} (resp. specializations) if it preserves 
generalizations (resp. specializations).

Valuation rings lift specializations for schemes \cite[Tag 01J8]{stacks-project}. 
The following theorem shows that log valuation rings play the same role in log geometry.

\begin{theorem}\label{thm:logvaluationringsliftspecializations}
Let $X$ be a quasi-coherent integral log scheme, 
i.e., $X$ admits integral charts Zariski locally.
Suppose se that $x' \rightsquigarrow x$ is a specialization
of points in $X$. 
Then there exists a log valuation ring $(V,P)$, 
with maximal ideal $\mathfrak{m} \subset V$, 
and a morphism $\Spec{(V,P)} \to X$ such that the specialization
$\mathfrak{m} \rightsquigarrow (0)$ lifts $x' \rightsquigarrow x$.
\end{theorem}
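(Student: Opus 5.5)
The plan is to reduce to the classical statement for schemes, \cite[Tag 01J8]{stacks-project}, and then build a compatible valuative monoid. First localize. The question is Zariski local around $x$, and $X$ is quasi-coherent integral, so we may assume $X=\Spec A$ is affine with an integral chart $M\to A$, where $M$ is an integral monoid. A morphism $\Spec(V,P)\to X$ then amounts to two pieces of data: a ring map $A\to V$, and a monoid map $M\to P$ compatible with $P\to V$. Since $\Spec(V,P)$ carries the log structure associated to $P\to V$, this determines a morphism of log schemes. By the scheme case there is a valuation ring $V$ with fraction field $K$ and a morphism $\Spec V\to \Spec A$ sending the closed point $\mathfrak m$ to $x$ and the generic point $(0)$ to $x'$.

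Next, build the monoid. Let $\alpha\colon M\to V$ be the composite $M\to A\to V$. Consider the face $F=\alpha^{-1}(V-\{0\})$ of $M$, i.e.\ the elements not mapping to zero. This is a face because $V$ is a domain, and it corresponds to the point $x'$ in the sense that elements of $M\setminus F$ vanish at $x'$. The log structure near $x$ does not see $M\setminus F$ on $\Spec V$, since those elements map to $0$. So the plan is to work with $M_F$ or simply with $F$: we get a homomorphism $F\to V-\{0\}\subset K^\ast$, and hence $F^{\gp}\to K^\ast$.

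The difficulty is elements of $M\setminus F$. They must map somewhere in $P$, yet $P\to V$ takes values in $V$, and elements of $P$ mapping to $0$ are allowed, since $\beta$ need not land in $V-\{0\}$. Take
\[
P_0=(V-\{0\})\times_{K^\ast}F^{\gp}
\]
(pullback) and adjoin the contribution of $M\setminus F$. Alternatively, use Properties (i): choose a valuative monoid $P\subset M^{\gp}\oplus V^\ast$ containing the image of $M$, and define the map to $V$ by sending elements outside the preimage of $V-\{0\}$ to $0$. To keep it simple, I will try the following concrete choice. Let $N$ be the submonoid of $M^{\gp}\oplus K^\ast$ generated by $(m,\alpha(m))$ for $m\in F$, by $(m,1)$ for $m\notin F$, and by $V^\ast$. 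Then enlarge $N$ using Properties (i) to a valuative monoid $P$ with $N^\ast=N\cap P^\ast$. The map $\beta\colon P\to V$ is given by the second coordinate on the part lying over $V-\{0\}$, and by $0$ on the part of $P$ lying over the prime generated by $M\setminus F$.

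The main obstacle is verifying that $\beta$ is a well-defined monoid homomorphism and that it is logarithmic, i.e.\ $\beta^{-1}(V^\ast)\to V^\ast$ is an isomorphism. For this I would choose $P$ more carefully. Use the ordering on $F^{\gp}$ pulled back from the valuation $v\colon K^\ast\to\Gamma$, and refine it lexicographically with an ordering that makes $M\setminus F$ strictly positive at a "higher level". In this lexicographic valuative monoid, $\beta$ sends an element to $0$ exactly when its higher component is positive, and to the value in $V$ otherwise. Exactness of $\theta$ at units then forces $\beta^{-1}(V^\ast)=V^\ast$. Finally, check that $\mathfrak m\mapsto x$ and $(0)\mapsto x'$ holds, which is inherited from the scheme-level map.
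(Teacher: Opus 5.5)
Your route is genuinely different from the paper's. The paper fixes $V$ with fraction field $K$ and applies Zorn's lemma to extensions $N\to V$ of $M\to V$ with $N\subset M^{\gp}$. It then argues that a maximal $N$ is valuative by adjoining $x$ or $-x$. You rightly drop the Properties (i) attempt, which gives no control over $\beta$, and instead write the valuative monoid down as a composite. With $H=F^{\gp}$, $v$ the valuation of $V$, and $>$ a total order on $M^{\gp}/H$, set
\[
P=\{g\in M^{\gp}:\bar g>0\}\cup\{h\in H: v(\alpha^{\gp}(h))\geq 0\},
\]
with $\beta=0$ on the first part (a prime ideal) and $\beta=\alpha^{\gp}$ on the second (a face). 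Valuativity, multiplicativity of $\beta$, and compatibility with $\alpha$ on $M$ are then immediate. The construction also shows exactly where the hypothesis on $M$ enters. In the Zorn argument this is hidden in the step deducing $x+p\in F$ from $n(x+p)\in F$. That step needs $F$ to be saturated in $P^{\gp}$, in particular $P^{\gp}/F^{\gp}$ torsion free, and this is not automatic for a maximal $P$.

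The gap is the existence of the ``higher level'' order. You assert it without argument, although it is the heart of the matter. A total order on $M^{\gp}/F^{\gp}$ making the image of $M\setminus F$ strictly positive exists if and only if $M^{\gp}/F^{\gp}$ is torsion free. Indeed, the image of $M$ is sharp because $F$ is a face. So in the torsion-free case its saturation is a sharp isolated cone, and such a cone extends to a total order. For fs $M$, take a linear form strictly positive on $\ol{M_F}\setminus\{0\}$ and refine it by any order on its kernel.

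Torsion-freeness holds if $M$ is saturated (in particular for fs charts), since then $M_F$ is saturated with $M_F^*=F^{\gp}$. It fails for merely integral charts, which the statement allows. Let $M\subset\Z^2$ be generated by $(2,0),(1,1),(0,1)$, let $A=k[s]_{(s)}$, and send $(2,0)\mapsto s$ and $(1,1),(0,1)\mapsto 0$. Then $F=\N(2,0)$ and $(1,0)$ is $2$-torsion in $M^{\gp}/F^{\gp}$.

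Worse, take any valuative $P$ with compatible $\beta$, and let $w,z,u$ be the images of $(1,1),(0,1),(2,0)$. They satisfy $2(w-z)=u$. So $w-z\in P$, since otherwise $u\in P^*$ while $s$ is not a unit. Hence $\beta(w-z)^2=s$, which is impossible in the valuation ring $V=A$ supplied by the scheme case. You therefore need an extra reduction. For example, replace $X$ by $X^{\sat}$, locally $\Spec(A\otimes_{\Z[M]}\Z[M^{\sat}])$. This is integral and surjective over $X$, hence lifts $x'\rightsquigarrow x$, and you can run your construction there with the saturated chart.

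Second, the sentence ``exactness of $\theta$ at units forces $\beta^{-1}(V^*)=V^*$'' does not give the logarithmic condition. For your $P$, $\beta^{-1}(V^*)=\{h\in H: v(\alpha^{\gp}(h))=0\}$. This is $0$ for $M=\N$ with $1$ mapped to a uniformizer, so it does not map isomorphically onto $V^*$. Replace $P$ by $P\oplus_{\beta^{-1}(V^*)}V^*$, the global sections of the associated log structure. Since $\beta^{-1}(V^*)$ is a group, this pushout is integral. It is valuative because every element of its groupification is the class of some $(g,c)$ with $g\in P^{\gp}$ and $c\in V^*$. The log scheme, and hence the morphism to $X$, is unchanged. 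The paper's proof also leaves this step implicit.
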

\begin{proof}
We may assume $X$ is the spectrum of a log ring $(A,M),$ 
where $A$ is an integral local ring and $M$ is a monoid, 
and where the generic (resp.\ unique closed) point of $X$ is $x'$ (resp.\ $x$).
Let $K$ be the fraction field of $A$.
A valuation ring $V$ exists with fraction field $K$ dominating $A$.
We need to show that the homomorphism of monoids $\theta\colon M\to V$ 
factors through a valuative monoid $P$.
It will be convenient to use additive instead of multiplicative notation for the 
monoid operation in a ring.
To avoid confusion,
we write $Q$ for the monoid $V$ in additive notation
and let $*$ be the element of $Q$ corresponding to $0\in V$.
Observe that $*+q=*$ for every $q\in Q$.

Let $\cA$ be the set of extensions $N\to Q$ of $\theta$ such that $N$ is an integral monoid,
$N$ dominates $M$,
and $M^\gp\to N^\gp$ is an isomorphism.
If $\{N_\alpha\to Q\}$ is a chain in $\cA$,
then $\bigcup_\alpha N_\alpha \to Q$ is an element of $\cA$.
Hence, 
by the Hausdorff maximality principle, 
a maximal element $\eta\colon P\to Q$ of $\cA$ exists.

We claim that $P$ is valuative.
If not, an element $x\in P^\gp$ exists such as $x,-x\notin P$.
Consider the face $F:=\eta^{-1}(Q-\{*\})$ of $P$,
and let $\alpha\colon F\to Q-\{*\}$ be the restriction of $\eta$.
First, assume $x\in F^\gp$.
Since $Q-\{*\}$ is valuative,
at least one of $\alpha^\gp(x)$ and $\alpha^\gp(-x)$ is in $Q-\{*\}$,
and we may assume $q:=\alpha^\gp(x)\in Q-\{*\}$ without loss of generality.
Let $P'$ be the submonoid of $P^{\gp}$ generated by $P$ and $x$.
We claim that $\eta'\colon P'\to Q$ given by $\eta'(p+nx):=\eta(p)+n*$ for $p\in P$ and $n\in \N$ is well-defined.
Assume that we have $p+nx=p'+n'x$ with $p'\in P$ and $n'\in \N$.
If $n,n'>0$,
then $\eta'(p+nx)=*=\eta'(p'+n'x)$.
In all other cases, we may assume $n'=0$ without loss of generality.
If $p\notin F$, then $p\notin F^\gp$,
so $p'\notin F$ since $x\in F^\gp$.
Hence $\eta'(p+nx)=\eta'(p')=*$.
If $p\in F$,
then $p'\in F$ since $x\in F^\gp$.
Thus, 
$\eta'(p+nx)=\eta'(p')=\alpha^\gp(p+nx)$, 
so that $\eta'$ is well-defined.
Hence $\eta'$ is an extension of $\eta$,
which contradicts the maximality of $\eta$.

Next, we assume $x\notin F^\gp$.
If $p+nx,p'-n'x\in F$ for some $p,p'\in P$ and $n,n'\in \N^+$,
then $n'p+np'\in F$,
so $p\in F$.
This implies $n(x+p)\in F$.
Since $F$ is saturated,
we have $x+p\in F$ and hence $x\in F^\gp$,
which is a contradiction.
Hence we may assume that $p+nx\notin F$ for every $p\in P$ and $n\in \N^+$.
Let $P'$ be the submonoid of $P^\gp$ generated by $P$ and $x$.
Next we claim that $\eta'\colon P'\to Q$ given by $\eta'(p+nx):=\eta(p)+n\ast$ is well-defined.
Assume that $p+nx=p'+n'x$, where $p'\in P$ and $n\in \N$.
If $n,n'>0$,
then $\eta'(p+nx)=*=\eta'(p'+n'x)$.
In all other cases, we assume $n'=0$ without loss of generality.
If $n=0$,
then we have $\eta'(p+nx)=\eta'(p')$.
If $n>0$,
then $\eta'(p+nx)=*$.
On the other hand,
the assumption on $x$ implies $p'\notin F$.
Thus, $\eta(p')=*$, so that $\eta'$ is well-defined.
Hence $\eta'$ extends $\eta$,
contradicting the maximality of $\eta$.
\end{proof}

\section{Log \texorpdfstring{$v$}{v}-covers for fs log schemes}
\label{section:Log v-covers}
In this section, 
we introduce the covers in the log $h$- and $v$-topologies.

Our definition of subtrusive morphisms for log schemes is based on the same notion for schemes,
which requires the constructible topology.
Recall that a 
subset of a topological space $X$ is constructible if it is the finite union of subsets of the form 
$U \cap V^{c},$ where $U, V\subset X$ are open and retro-compact \cite[Tag 005G]{stacks-project}. 
A subset is pro-constructible if locally it is the intersection of constructible subsets.
The constructible topology on schemes is the topology whose closed subsets are the pro-constructible 
subsets \cite[§7.2]{EGA}.
A morphism of schemes $X \to Y$ is called
\begin{enumerate}
\item[(i)] \emph{submersive} if it is surjective and the topology on $Y$ is the quotient topology 
induced by $f$ \cite[p.\ 5]{Rydh}.
\item[(ii)] \emph{subtrusive} if it is submersive in the constructible topology 
(for instance if $f$ satisfies one of the conditions of \cite[Proposition 1.6]{Rydh}) and if
every ordered pair $y\leq y'$ of points of $Y$ lifts to an ordered pair $x\leq x'$ of points of $X$.
\end{enumerate}

We recall from \cite[pp.\ 187, 190]{Rydh} some properties of submersive and subtrusive morphisms:
Every subtrusive morphism of schemes is submersive.
The composition of two submersive (resp.\ subtrusive) morphisms is submersive (resp.\ subtrusive), and if the composition $f \circ g$ is submersive (resp.\ subtrusive), 
then $g$ is submersive (resp.\ subtrusive).
Typical examples of morphisms that are not submersive are non-flat morphisms or morphisms obtained by removing points from blow-ups (to force non-properness).
We will apply the following definition to (fs log) schemes.

\begin{definition}
Let $\cC$ be a category with fiber products,
and let $\mathbf{P}$ be a property of morphisms in $\cC$.
A morphism $X \to S$ in $\cC$ has the property $\mathbf{P}$ \emph{universally} if for any other morphism 
$S' \to S$ in $\cC$ the pullback morphism $S' \times_S X \to S'$ has the property $\mathbf{P}.$
\end{definition}

We will repeatedly use that a surjective morphism of schemes is universally subtrusive 
if it is proper or universally open, see \cite[Remark 2.5]{Rydh}.
Our conventions are that 
a morphism of log schemes is \emph{quasi-compact, quasi-separated, separated, proper, 
finite type, locally finitely presented, 
finitely presented, surjective, or open} if its underlying morphism of schemes 
satisfies the same property.
The constructible topology for log schemes is the constructible topology on its 
underlying scheme.

\begin{definition}
A morphism of log schemes is
\emph{subtrusive} (resp. \emph{submersive}) 
if its underlying morphism of schemes 
is subtrusive (resp. \emph{submersive}).
\end{definition}

Next, we define the log $h$-topology for quasi-compact fs log schemes, which serves as a logarithmic analog of the $h$-topology presented in \cite[Definition 8.1]{Rydh}. Our log $h$-topology aligns with the original $h$-topology defined by Voevodsky in \cite[Definition 3.1.2]{Voevodsky-Homology} for the noetherian case.

\begin{definition}
\label{definition:loghtop}
The \emph{log $h$-topology} on the category of qcqs fs log schemes is generated by finite covering 
families $\{U_i\to X\}_{i\in I}$ such that $\amalg_{i\in I} U_i\to X$ is universally subtrusive and 
finitely presented.
We use lh as a shorthand for the log $h$-topology.
\end{definition}

\begin{remark}
The log $h$-topology is not subcanonical, i.e., not all representable presheaves are sheaves in the log $h$-topology. For instance, the structure sheaf $\mathcal{O}_X$ is not a sheaf for the $h$-topology \cite[Tag 0EV0]{stacks-project}, so it will not be a sheaf for the log $h$-topology either.
\end{remark}

\begin{proposition}\label{cor:LoghLogv}
Let $f \colon X \to S$ be a finite type morphism of fs log schemes such that $\ul{S}$ is locally noetherian.
Then $f$ is universally submersive if and only if it is universally subtrusive.
\end{proposition}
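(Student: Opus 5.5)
The implication from universally subtrusive to universally submersive is formal. Every subtrusive morphism of schemes is submersive, and by our conventions both properties of a morphism of log schemes are read off from the underlying morphism of schemes. So the content of the statement is the converse. The difficulty is that ``universally'' is taken in the category of fs log schemes. For an fs base change $S'\to S$, the underlying scheme of $X':=X\times^{\mathrm{fs}}_S S'$ is not $\ul{X}\times_{\ul{S}}\ul{S'}$. It only maps to it by a finite morphism (integralization followed by saturation), which need not be surjective. Hence I cannot simply quote Rydh's scheme-theoretic theorem that, for finite type morphisms over a locally noetherian base, universal submersiveness and universal subtrusiveness agree \cite{Rydh}. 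Instead I would verify subtrusiveness of $X'\to S'$ directly, using log valuation rings as test objects.

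Fix an fs log scheme $S'\to S$ and set $f'\colon X'\to S'$. Subtrusiveness has two parts: lifting ordered pairs $y\leq y'$, and submersiveness in the constructible topology. For the lifting, take a specialization $s'\rightsquigarrow s$ in $S'$. Working Zariski locally, $S'$ has fs charts, so it is quasi-coherent and integral. By \cref{thm:logvaluationringsliftspecializations} there is a log valuation ring $(V,P)$ and a morphism $T:=\Spec{(V,P)}\to S'$ such that $\mathfrak{m}\rightsquigarrow(0)$ maps to $s'\rightsquigarrow s$. If $P$ is not fine, I would replace it by a suitable fs submonoid through which the chart factors, or pass to a limit of fs log schemes. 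I then base change further along $T\to S'$ and obtain the fs pullback $X_T\to T$. By hypothesis $X_T\to T$ is submersive, and it is quasi-compact because $f$ is of finite type. On the two-point specialization $\mathfrak{m}\rightsquigarrow(0)$ of $\Spec V$ I argue by contradiction. Suppose no point of $X_T$ over $(0)$ specializes to a point over $\mathfrak{m}$. Then the preimage of the generic point would be stable under specialization and, by quasi-compactness, closed. Submersiveness would then force the generic point to be closed in $\Spec V$, which is absurd. Here I must first reduce to the case where $V$ has exactly the two relevant primes, by localizing $V$ at $\mathfrak{m}$ and then quotienting by the prime just below the generic one, so that only these two points remain. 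A lifted specialization in $X_T$ then maps through the canonical morphism $X_T\to X'$ to the required pair $x\leq x'$ over $s\leq s'$.

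For submersiveness in the constructible topology, I would use the criterion in \cite[Proposition 1.6]{Rydh}: a quasi-compact morphism that lifts specializations after every base change by a valuation ring is submersive in the constructible topology. To apply it, I note that any morphism $\Spec W\to\ul{S'}$ from a valuation ring factors through a log valuation ring structure $(W,P)$. This follows from the construction in the proof of \cref{thm:logvaluationringsliftspecializations}, which extends the chart monoid to a valuative monoid inside $W$. The lifting argument of the previous step then applies verbatim.

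I expect the main obstacle to be the compatibility between log valuation rings, which need not be fs, and the category of fs log schemes in which universality is defined. Either the hypothesis has to be extended to such test objects by a limit argument, which is where the finite type hypothesis and the locally noetherian hypothesis on $\ul{S}$ are used, so that $f$ is finitely presented and its fibres commute with filtered limits, or $P$ has to be approximated by fs submonoids without destroying the lift of $s'\rightsquigarrow s$. I would first try the limit approach, writing $(V,P)$ as a filtered colimit of fs log rings. The point is that submersiveness of the finitely presented pullbacks then passes to the limit through the constructible-topology formulation.
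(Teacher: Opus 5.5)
Your easy direction is fine, but the converse has a genuine gap in the lifting of specializations, and the local noetherianity of $\ul{S}$ never does any real work in your argument.

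First, as you note yourself, $T=\Spec(V,P)$ from \cref{thm:logvaluationringsliftspecializations} is in general not an fs log scheme. So ``by hypothesis $X_T\to T$ is submersive'' is not available, and neither proposed repair (limits, fs approximation) is carried out.

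Second, and more seriously, even granting submersiveness over $T$, your two-point argument needs $\Spec{V}$ to consist of $\mathfrak{m}$ and $(0)$ only. The reduction you describe does not exist. $V$ is already local, no prime lies below $(0)$, and a valuation ring of rank $\geq 2$ cannot be cut down to a two-point scheme while keeping the specialization $\mathfrak{m}\rightsquigarrow(0)$. Chaining lifts through intermediate primes also fails: submersiveness only gives \emph{some} lift of each step, not one starting at a prescribed point, and in infinite rank there need not be adjacent primes to chain along. Because $S'$ is an arbitrary fs log scheme, you cannot take $V$ to be a discrete valuation ring either.

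This is exactly where the hypothesis on $\ul{S}$ is indispensable. Over non-noetherian bases, universal submersiveness does not imply universal subtrusiveness, already for trivial log structures \cite[Example 4.5]{Rydh}. An argument in which noetherianity enters only through ``fibres commuting with filtered limits'' therefore cannot be completed along these lines.

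The missing idea, which is how the paper argues, is that Rydh's theorem \emph{can} be quoted after a reduction, contrary to your premise.

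Work Zariski locally, choose a chart $P\to P'$ of $S'\to S$, and set $S_1:=S\times_{\A_P}\A_{P'}$. Since $S\to\A_P$ is strict, $\ul{S_1}=\ul{S}\times_{\Spec{\Z[P]}}\Spec{\Z[P']}$ is of finite type over $\ul{S}$, hence locally noetherian. Since $S'\to\A_{P'}$ and $S_1\to\A_{P'}$ are strict, so is $S'\to S_1$. Fs base change along a strict morphism commutes with passing to underlying schemes. Hence $\ul{f'}$ is the scheme-theoretic base change of $\ul{f_1}\colon\ul{X\times_{\A_P}\A_{P'}}\to\ul{S_1}$ along $\ul{S'}\to\ul{S_1}$.

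The morphism $\ul{f_1}$ is of finite type. It is universally submersive in the category of schemes, because $f_1$ is universally submersive among fs log schemes and any scheme over $\ul{S_1}$ can be given the strict log structure pulled back from $S_1$. Then \cite[Corollary 2.10]{Rydh} shows $\ul{f_1}$ is universally subtrusive, so $\ul{f'}$ is subtrusive. Log valuation rings are not needed at all.

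For the constructible-topology part you do not need them either: $f'$ is quasi-compact and, being submersive, surjective.
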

\begin{proof}
Assume first that $f$ is universally submersive.
Let $g\colon S'\to S$ be a morphism of fs log schemes.
We need to show the pullback $f'\colon X\times_S S'\to S'$ is subtrusive.
We can work Zariski locally on $S'$ and thus assume that $g$ has a chart $P\to P'$.
To show $f'$ is subtrusive, 
it suffices to show the pullback $\ul{X\times_{\A_P} \A_{P'}}\to \ul{S\times_{\A_P} \A_{P'}}$ 
is a universally subtrusive morphism of schemes.
This follows from \cite[Corollary 2.10]{Rydh} since $f$ is universally submersive and 
$\ul{S\times_{\A_P} \A_{P'}}$ is locally noetherian.
The proof of the converse implication is the same.
\end{proof}

\begin{remark}
By \cref{cor:LoghLogv} the log $h$-topology on the category of noetherian fs log schemes is generated 
by finite covering families $\{U_i\to X\}_{i\in I}$ for which $\amalg_{i\in I} U_i\to X$ is universally 
submersive.
Example 4.5 in 
\cite{Rydh} shows that the result is false if one drops the assumption that 
$S$ is locally noetherian.
\end{remark}

\begin{example}\label{example:surjectivebutnotuniversally}
Let $k$ be a field.
Consider the following pullback square of fs log schemes
\[
\begin{tikzcd}
X' \ar[r] \ar[d] & \pt_{\N} \ar[d,"\Delta"] \\
S' \ar[r] & \pt_{\N^2}
\end{tikzcd}
\]
where $\pt_\N:=\Spec{(k,\N)}$, $\Delta$ is the diagonal morphism, and $S' \to \pt_{\N^2}$ is any nontrivial log blow-up. Then the pullback morphism $X' \to S'$ is not surjective even though the underlying morphism of schemes $\ul{\Delta}$ is an isomorphism.
\end{example}

\cref{example:surjectivebutnotuniversally} demonstrates that a morphism of fs log schemes is 
not necessarily universally surjective even though its underlying morphism of schemes 
is universally surjective. 
However, 
Kato's notion of exactness turns out to be a sufficient condition for surjective morphisms 
to be universally surjective.

\begin{proposition}\label{prop:universallysurjective}
An exact surjective morphism of fs log schemes is universally surjective in the category of fs log schemes. 
\end{proposition}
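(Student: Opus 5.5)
We reduce to a pointwise statement: given an exact surjective morphism $f\colon X\to S$ of fs log schemes and any morphism $g\colon S'\to S$ of fs log schemes, every point $s'\in S'$ must be in the image of $f'\colon X':=X\times_S S'\to S'$.

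Fix $s'\in S'$ with image $s=g(s')$, and choose $x\in X$ with $f(x)=s$. Surjectivity is a question about points, so we may base change to log points. Let $k$ be an algebraically closed field containing both $k(x)$ and $k(s')$ compatibly over $k(s)$. We obtain strict morphisms $\pt_{Q,k}\to X$, $\pt_{P,k}\to S$ and $\pt_{P',k}\to S'$ (using \cref{exm:log point}), where $P=\ol{\cM}_{S,\bar s}$, $Q=\ol{\cM}_{X,\bar x}$ and $P'=\ol{\cM}_{S',\bar s'}$. Since strict base change commutes with the fs fiber product, it suffices to show that
\[
\pt_{Q,k}\times_{\pt_{P,k}}\pt_{P',k}
\]
is nonempty. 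Its underlying scheme is $\Spec$ of $k\otimes_{\Z}\Z[R]$ modulo the ideal generated by the nonunits of $R$, where $R$ is the saturation of the integral image of $Q\oplus_P P'$. More precisely, this fs fiber product of log points is, up to nilpotents and field extensions, the spectrum of the $k$-algebra $k[R^*]$ (possibly with a finite-group twist), where $R=(Q\oplus_P P')^{\mathrm{int},\sat}$ and one takes the points where all of $R^+$ vanishes. Hence the fiber product is nonempty if and only if the face $R^*$ does not meet the images of $Q^+$ and $P'^+$ in a way that forces $1=0$. Equivalently, the canonical maps $Q\to R$ and $P'\to R$ must both be local.

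The key monoid-theoretic step is therefore the following. Let $\theta\colon P\to Q$ be an exact (hence local) homomorphism of sharp fs monoids, and let $P\to P'$ be a local homomorphism to a sharp fs monoid. Then the pushout in fs monoids admits a local map to some sharp monoid compatible with $Q$ and $P'$; equivalently, there is a prime of $R$ pulling back to $Q^+$ and $P'^+$. The morphism $\pt_{P'}\to\pt_P$ corresponds to $P\to P'$, which is local because it comes from a point map.

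To build the required map, we work on groups and use exactness. Exactness means $P=P^{\gp}\times_{Q^{\gp}}Q$. We want a homomorphism $\phi\colon Q^{\gp}\oplus_{P^{\gp}}P'^{\gp}\to \Gamma$ into an ordered group $\Gamma$ that sends $Q^+$ and $P'^+$ to strictly positive elements. Dually, we need a point of the relative interior of the cone $Q^\vee_\R$ and one of $P'^\vee_\R$ with the same restriction to $P^\vee_\R$. Exactness of an fs homomorphism says exactly that $Q_\R\cap P^{\gp}_\R=P_\R$, which dualizes to surjectivity of $Q^\vee\to P^\vee$ after rationalization. Exactness together with locality then gives that the interior of $Q^\vee$ maps onto the interior of $P^\vee$. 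Any interior point of $P'^\vee$ restricts to an interior point of $P^\vee$ because $P\to P'$ is local, so we can lift it to an interior point of $Q^\vee$. This is a rational interior point, which defines a local map $R\to\N$. The map $R\to\N$ gives a morphism from the standard log point to the fiber product, which is therefore nonempty.

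The main obstacle is the cone-duality claim that the interior of $Q^\vee$ surjects onto the interior of $P^\vee$ for exact local $P\to Q$. We will try to prove it via Farkas' lemma: an interior point $u$ of $P^\vee$ that is not hit would give a separating element of $Q_\R$ lying in $P^{\gp}_\R$. Exactness forces that element into $P_\R$, where it pairs positively with $u$, which is a contradiction.
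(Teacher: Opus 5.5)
Your argument is correct, but it takes a different route from the paper. The paper gives no argument of its own: it simply cites \cite[Proposition 2.2.2]{MR1457738}. One could equally combine \cite[Corollary III.2.2.4]{Ogu} with \cref{v.1}, as the paper does in the proof of \cref{v.2}. That gives surjectivity of $\ul{X\times_S S'}\to \ul{X}\times_{\ul{S}}\ul{S'}$ for exact $f$, and the statement then follows because surjectivity of schemes is stable under base change.

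You instead reduce to log points over an algebraically closed field $k$ and prove the monoid statement directly by convex duality. Exactness gives $P^{\gp}_{\mathbb{R}}\cap Q_{\mathbb{R}}=P_{\mathbb{R}}$, so restriction maps the interior of $Q^\vee$ onto the interior of $P^\vee$. Your separation argument proves this; alternatively, $Q^\vee\to P^\vee$ is a linear surjection of full-dimensional cones, and linear maps send relative interiors onto relative interiors. Locality of $P\to P'$ then lets you glue compatible strictly positive functionals into a local homomorphism from the fs pushout to $\N$.

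Your route buys a self-contained, elementary proof that exposes the monoid combinatorics. Since $x$, $s'$ and the compatible field $k$ were arbitrary, it in fact proves that every point of $\ul{X}\times_{\ul{S}}\ul{S'}$ lifts to $\ul{X\times_S S'}$, which is exactly the surjectivity used in the proof of \cref{v.2}. The citation buys brevity.

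Two details should be made explicit.
\begin{enumerate}
\item You need $\theta^{\gp}$ to be injective in order to regard $P^{\gp}_{\mathbb{R}}$ as a subspace of $Q^{\gp}_{\mathbb{R}}$. This holds: if $\theta^{\gp}(a)=0$, exactness puts $\pm a\in P$, so $a=0$ by sharpness.
\item A morphism $\pt_{Q,k}\to\pt_{P,k}$ is given on log structures by $(p,a)\mapsto(\theta(p),\chi(p)a)$ for some character $\chi\colon P\to k^*$. So to get an actual morphism $\pt_{\N,k}\to \pt_{Q,k}\times_{\pt_{P,k}}\pt_{P',k}$ from your local maps $Q\to\N$ and $P'\to\N$, you must also match the $k^*$-components. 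You can do this by extending a character of $P^{\gp}$ along the injection $\theta^{\gp}$, using that $k^*$ is divisible; this is where algebraic closedness of $k$ is needed.
\end{enumerate}
Once you argue through this universal property, the loose description of the underlying scheme of the fiber product (``up to nilpotents and field extensions'', ``finite-group twist'') is no longer needed.
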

\begin{proof}
See \cite[Proposition 2.2.2]{MR1457738} for a proof.
\end{proof}

\begin{proposition}\label{prop:saturated}
Let $f\colon X\to S$ be a saturated morphism of fs log schemes.
Then $f$ is universally subtrusive in the category of fs log schemes if and only if 
$\ul{f}$ is universally subtrusive in the category of schemes.
\end{proposition}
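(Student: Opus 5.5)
The key input is that, for a saturated morphism, fs fiber products are computed on underlying schemes. Precisely, I would use the standard fact (Ogus, or Tsuji's theory of saturated morphisms) that if $f\colon X\to S$ is saturated and $g\colon S'\to S$ is any morphism of fs log schemes, then the fiber product $X\times_S S'$ taken in fs log schemes agrees with the fiber product taken in all log schemes. Hence its underlying scheme is
\[
\ul{X\times_S S'}\cong \ul{X}\times_{\ul{S}}\ul{S'}.
\]
The reason is that saturated morphisms are integral and saturated, so the pushout of monoids along any fs chart is already integral and saturated. No integralization or saturation step (which could change the underlying scheme by a closed immersion or a finite map) is needed.

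Granting this, the implication ``$\ul{f}$ universally subtrusive $\Rightarrow$ $f$ universally subtrusive'' is immediate. For any $S'\to S$ in fs log schemes, the underlying morphism of the pullback $X\times_S S'\to S'$ is $\ul{X}\times_{\ul{S}}\ul{S'}\to\ul{S'}$. This is a base change of $\ul{f}$ along $\ul{S'}\to\ul{S}$, so it is subtrusive.

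For the converse, let $T\to\ul{S}$ be an arbitrary morphism of schemes. We must show that $\ul{X}\times_{\ul{S}}T\to T$ is subtrusive. First I would equip $T$ with the pullback log structure, giving a strict morphism $S_T\to S$ with $\ul{S_T}=T$. Since strict base change preserves fs-ness, $S_T$ is an fs log scheme. By the fact above, the fs fiber product $X\times_S S_T$ has underlying scheme $\ul{X}\times_{\ul{S}}T$. Here one could also note that a strict base change never requires saturation, so in this step saturatedness is not even needed. Since $f$ is universally subtrusive in fs log schemes, $X\times_S S_T\to S_T$ is subtrusive, which is exactly the claim. Subtrusiveness is a property of the underlying morphism, and the constructible topology is by convention the one on the underlying scheme, so nothing more needs to be checked.

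The main obstacle is justifying the compatibility of fs fiber products with underlying schemes for saturated $f$. I would work Zariski locally, choose a chart $P\to Q$ for $f$ that is a saturated homomorphism of fs monoids, and a chart $P\to P'$ for $g$. I would then use that $Q\oplus_P P'$ is already integral (saturated implies integral) and saturated, citing Ogus, Ch. I, §4.8. Then $\A_Q\times_{\A_P}\A_{P'}=\A_{Q\oplus_P P'}$ in both categories. I would also check that charts for a saturated morphism can be chosen to be saturated, again citing Ogus or Tsuji, possibly after étale localization, which is harmless for subtrusiveness.
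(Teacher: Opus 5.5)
Your proposal is correct and is essentially the paper's argument: the paper's proof is just the observation that for saturated $f$ the fs fiber product satisfies $\ul{X\times_S S'}\cong \ul{X}\times_{\ul{S}}\ul{S'}$ for every $S'\to S$, and your treatment of the converse via the strict base change $S_T\to S$ is the same device the paper uses in the proof of \cref{prop:exact}. Under the paper's convention of Zariski log structures, charts exist Zariski locally, so the étale localization you mention is not needed.
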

\begin{proof}
This is an immediate consequence of the fact that for every morphism of fs log schemes $S'\to S$,
there is a natural isomorphism $\ul{X\times_S S'}\cong \ul{X}\times_{\ul{S}}\ul{S'}$.
\end{proof}

\begin{lemma}
\label{v.1}
Let $X$ be a fine log scheme with saturation $X^\sat$.
The naturally induced morphism $X^\sat\to X$ is surjective.
\end{lemma}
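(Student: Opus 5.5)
The plan is to reduce to an affine chart and then solve a surjectivity problem on monoid algebras.

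\emph{Reduction to a chart.} Surjectivity is Zariski local on $X$, and saturation is compatible with strict localization. So we may assume $X$ has a fine chart $P\to\cO_X$, with $P$ a fine monoid. Then $X^\sat\cong X\times_{\A_P}\A_{P^\sat}$, the fiber product being taken in fine log schemes. This fiber product is already saturated, since $P^\sat$ is saturated and $P\to P^\sat$ is the chart. Its underlying scheme is $\ul{X}\times_{\Spec\Z[P]}\Spec\Z[P^\sat]$. Surjectivity is stable under base change of schemes, so it suffices to show that $\Spec\Z[P^\sat]\to\Spec\Z[P]$ is surjective.

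\emph{Surjectivity on monoid algebras.} Since $P$ is fine, $P^\sat$ is a finitely generated $P$-module. Indeed, $P^\sat$ is finitely generated, and each generator has a multiple lying in $P$. Hence $\Z[P]\to\Z[P^\sat]$ is finite. It is also injective, because both rings sit inside $\Z[P^\gp]$. A finite injective ring map is integral, so lying over gives surjectivity of $\Spec\Z[P^\sat]\to\Spec\Z[P]$.

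\emph{Main obstacle.} The delicate step is to check that the saturation functor really is computed by the base change along $\A_{P^\sat}\to\A_P$ on underlying schemes. In general, fiber products of fine log schemes need not be computed on underlying schemes. Here, however, $X\times_{\A_P}\A_{P^\sat}$ is computed as the scheme $\ul X\otimes_{\Z[P]}\Z[P^\sat]$ with log structure induced by $P^\sat$. This works because $\A_{P^\sat}\to\A_P$ is given by a chart map and the resulting pushout $P^\sat\oplus_P P=P^\sat$ is already integral and saturated; this is Ogus' construction of $X^\sat$ (Ogu, Prop.\ III.2.1.5). Once this identification is in place, the morphism $X^\sat\to X$ is, locally, the base change of the finite surjective map $\Spec\Z[P^\sat]\to\Spec\Z[P]$, and the lemma follows.
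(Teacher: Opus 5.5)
Your proposal is correct and is essentially the paper's proof: reduce Zariski locally to a fine chart $P$, identify $X^\sat$ with $X\times_{\A_P}\A_{P^\sat}$ (whose underlying scheme is the scheme-theoretic base change because $X\to\A_P$ is strict), and use that $\ul{\A_{P^\sat}}\to\ul{\A_P}$ is universally surjective. The paper simply asserts this last fact, while you justify it by integrality of $\Z[P]\subset\Z[P^\sat]$ plus lying over. Integrality already follows from each $x\in P^\sat$ having a multiple in $P$, so finite generation of $P^\sat$ is not needed.
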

\begin{proof}
The question is Zariski local on $X$,
so we may assume that $X$ has a fine chart $P$.
The claim follows from the fact that the induced morphism $\ul{\A_{P^\sat}}\to \ul{\A_P}$ is universally surjective.
\end{proof}

We will frequently use the following result, 
which relates universal subtrusive morphism of log schemes with the underlying morphism of schemes.

\begin{proposition}\label{prop:exact}
\label{v.2}
Let $f\colon X\to S$ be an exact morphism of fs log schemes.
Then $\ul{f}$ is universally subtrusive if and only if 
$f$ is universally subtrusive.
\end{proposition}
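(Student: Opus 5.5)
The plan is to treat the two implications separately; only the direction ``$\ul{f}$ universally subtrusive $\Rightarrow$ $f$ universally subtrusive'' should need exactness.

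\emph{The direction $f$ universally subtrusive $\Rightarrow$ $\ul{f}$ universally subtrusive.} Let $T\to \ul{S}$ be an arbitrary morphism of schemes. Equip $T$ with the pullback log structure, so that $T\to S$ becomes a strict morphism of fs log schemes; pullbacks of fs charts are fs charts. For a strict base change, the fs fiber product has underlying scheme $\ul{X}\times_{\ul{S}}T$. Hence subtrusivity of $X\times_S T\to T$ is exactly subtrusivity of $\ul{X}\times_{\ul{S}}T\to T$. This direction does not use exactness.

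\emph{The converse.} Assume $\ul{f}$ is universally subtrusive and let $g\colon S'\to S$ be a morphism of fs log schemes. We must show that $\ul{X'}\to \ul{S'}$ is subtrusive, where $X':=X\times_S S'$. The question is Zariski local on $S'$ and $S$, so I may assume $g$ has a chart $P\to P'$. Then $g$ factors as a strict morphism $S'\to S\times_{\A_P}\A_{P'}$ followed by the projection. By the first paragraph the strict step costs nothing, so it suffices to treat base change along $\A_{P'}\to\A_P$.

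I will not analyse the underlying scheme of the fs fiber product directly, since integralization and saturation make it awkward. Instead I will use Rydh's valuative characterization of subtrusive morphisms: every morphism from the spectrum of a valuation ring $V$ to the target lifts after an extension of valuation rings $V\to W$. Surjectivity alone is not enough, because subtrusivity also involves the constructible topology; this is why I use the valuative criterion. Let $\Spec V\to \ul{S'}$ be such a morphism. Composing to $\ul{S}$ and using that $\ul{f}$ is universally subtrusive, we obtain an extension $W$ of $V$ and a morphism $\Spec W\to \ul{X}$ compatible over $\ul{S}$.

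It remains to promote the pair of morphisms $\Spec W\to\ul{X}$ and $\Spec W\to\ul{S'}$ to a morphism of log schemes $\Spec{(W,Q)}\to X'$, possibly after a further extension of $W$. For this I want a log valuation ring $(W,Q)$ with compatible log morphisms to $X$ and $S'$ over $S$; the universal property of the fs fiber product then gives the desired point of $\ul{X'}$ over $\Spec V$. At the closed point, consider the amalgamated sum $\cM_{X,\bar x}\oplus_{\cM_{S,\bar s}}\cM_{S',\bar s'}$ together with its map to the multiplicative monoid of $W$.

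\emph{Main obstacle.} One has to produce a valuative monoid $Q$ that contains the image of this amalgamated sum and maps to $W$ logarithmically, so that the units match the units of $W$. Exactness of $f$ is where this should come from. Via \cref{prop:universallysurjective} and \cite[Proposition I.4.2.1]{Ogu}, exactness should ensure that the amalgamated sum, after integralization and saturation, is local over the relevant faces, so that no extra units are created. Then Properties (i) of valuative monoids gives a valuative $Q$ inside the group with $Q^*$ equal to the correct unit group. Making this compatible with the map to $W$ will use the argument of \cref{thm:logvaluationringsliftspecializations}: extend the monoid map into $W$ by a maximality argument, possibly enlarging $W$ once more. If this step fails in general, I would fall back to \cite[Proposition 2.2.2]{MR1457738}, which gives universal surjectivity, combined with the fact that the fs fiber product maps finitely and surjectively onto a closed subscheme of the schematic fiber product, so that finite surjective maps preserve universal subtrusivity.
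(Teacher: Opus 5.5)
Your proof of ``$f$ universally subtrusive $\Rightarrow$ $\ul{f}$ universally subtrusive'' (strict base change along $T$ with the pulled-back log structure) is correct and matches the paper. The converse, however, is not proved. Your main route stops exactly at the step that carries all the content. Given compatible maps $\Spec W\to\ul{X}$ and $\Spec W\to\ul{S'}$ over $\ul{S}$, you must show that, after extending $W$, they come from a point of $\ul{X\times_S S'}$. Concretely, the map from the amalgamated sum $\cM_{X,\bar x}\oplus_{\cM_{S,\bar s}}\cM_{S',\bar s'}$ to the multiplicative monoid of $W$ must factor through its integralization, and then its saturation. Because $0\in W$ is absorbing, this genuinely fails without exactness; it is exactly what goes wrong in \cref{example:surjectivebutnotuniversally}. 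Neither Properties (i) nor the maximality argument of \cref{thm:logvaluationringsliftspecializations} can repair it, since that argument only enlarges a monoid that is already integral and already maps compatibly to $W$. Saying that exactness ``should'' prevent extra units is a heuristic, not a proof. Note also that Rydh's valuative criterion \cite[Corollary 2.9]{Rydh} needs quasi-compactness, which the statement does not assume.

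Your fallback points in the right direction but is misstated. Suppose $\ul{X\times_S S'}$ maps finitely onto \emph{some} closed subscheme of $\ul{X}\times_{\ul{S}}\ul{S'}$, and $f$ is universally surjective. This does not give subtrusivity: a closed subscheme of a scheme universally subtrusive over $\ul{S'}$ can surject onto $\ul{S'}$ without lifting specializations. What you need, and what the paper uses, is that for exact $f$ the comparison morphism $\ul{X\times_S S'}\to\ul{X}\times_{\ul{S}}\ul{S'}$ is proper and surjective onto the \emph{whole} schematic fiber product. This comes from \cite[Corollary III.2.2.4]{Ogu} for the integralization step, which is where exactness enters, and from \cref{v.1} for the saturation step. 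A proper surjective morphism is universally subtrusive \cite[Remark 2.5]{Rydh}. Composing it with the universally subtrusive base change $\ul{X}\times_{\ul{S}}\ul{S'}\to\ul{S'}$ of $\ul{f}$ gives the claim directly, with no charts, strict reductions, or valuation rings.

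You could also get this surjectivity from \cite[Proposition 2.2.2]{MR1457738}, applied pointwise. Take a field $\Omega$ realizing a given point of $\ul{X}\times_{\ul{S}}\ul{S'}$, and apply it to the exact surjective morphism between the log points $\Spec\Omega$ carrying the log structures pulled back from $X$ and $S$. That step is absent from your proposal.
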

\begin{proof}
If either $f$ or $\ul{f}$ is subtrusive, the other morphism is subtrusive by definition. However, the issue is to show that it is universally subtrusive because the pullback is formed in two different categories.

Suppose that $\ul{f}$ is universally subtrusive and let $S' \to S$ be any morphism of fs log schemes. We must show that the pullback morphism $g \colon X \times_S S' \to S'$ in the category of fs log schemes is subtrusive, i.e., that the morphism $\ul{g} \colon \ul{X\times_S S'} \to \ul{S'}$ is subtrusive. 
Consider the pullback $\ul{X}\times_{\ul{S}}\ul{S'} \to \ul{S'}$ of $\ul{f}$ along the underlying morphism of schemes $\ul{S'} \to \ul{S}$. Since $\ul{f}$ is universally subtrusive, this pullback morphism is universally subtrusive. 
Moreover, since $f$ is exact, the morphism of schemes
$\ul{X\times_S S'} {\longrightarrow} \ul{X}\times_{\ul{S}}\ul{S'}$ is proper and surjective by \cite[Corollary III.2.2.4]{Ogu} and Lemma \ref{v.1} and hence universally subtrusive by \cite[Remark 2.5 (1a)]{Rydh}.
Since universally subtrusive morphisms are closed under composition, 
this shows that $\ul{g}$ is universally subtrusive.

On the other hand, 
suppose that $f$ is universally subtrusive and $V \to \ul{S}$ is a morphism of schemes. 
Consider $V$ as an fs log scheme with the trivial log structure.
The projection $X\times_S (V\times_{\ul{S}}S)\to V\times_{\ul{S}} S$ is subtrusive, which implies that the projection $X\times_{\ul{S}}V\to V$ is subtrusive.
Thus $\ul{f}$ is universally subtrusive in the category of schemes.
\end{proof}

Recall from \cite[§1.10]{KatoLog2} that a morphism of fs log schemes $f\colon X\to S$ is \emph{log flat} if strict fppf locally on $S$ and $X$,
there exists a chart $P\to Q$ of $f$ such that $P\to Q$ is injective and the induced morphism $X\to S\times_{\Z[P]}\Z[Q]$ is strict flat.

A locally finitely presented flat morphism between schemes is universally open.
We have a similar property for log flat morphisms with imposing exactness.

\begin{lemma}\label{lem:logflatexact}
Let $f$ be an exact locally finitely presented log flat morphism of fs log schemes. Then $f$ is universally open.
\end{lemma}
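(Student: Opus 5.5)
The plan is to reduce to the underlying scheme map being flat locally, after a suitable change of chart, and then use that the composition of universally open maps is universally open. Concretely, let $f\colon X\to S$ be exact, locally finitely presented and log flat. Openness is local on $X$ and $S$ in the strict fppf topology: strict fppf covers are flat and locally finitely presented on underlying schemes, hence universally open and surjective, so images of open sets descend. We may therefore assume that $f$ admits a chart $\theta\colon P\to Q$ with $\theta$ injective and with $X\to S\times_{\A_P}\A_Q$ strict and flat. Since $f$ is locally finitely presented, we may also take $X\to S\times_{\A_P}\A_Q$ to be locally finitely presented, possibly after replacing $Q$ by a finitely generated submonoid.

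The next step is to use exactness to improve the chart. Exactness of $f$ at a point $x$ means $\overline{\cM}_{S,s}\to\overline{\cM}_{X,x}$ is exact. Following Kato's standard argument, we localize $P$ and $Q$ at the faces corresponding to $s$ and $x$, which is allowed after shrinking, so that $\theta$ becomes a local exact injective homomorphism of fs monoids. By \cite[Proposition I.4.6.3]{Ogu}, an injective exact (hence local) homomorphism between saturated monoids, after passing to the neat chart where $P\cong \overline{\cM}_{S,s}$ is sharp, is integral. Then \cite[Proposition I.4.6.7]{Ogu} makes $\theta$ flat, so that $\Z[P]\to\Z[Q]$ is flat by \cite[Proposition I.4.5.12]{Ogu}. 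The integrality also gives $\ul{S\times_{\A_P}\A_Q}=\ul S\times_{\Spec\Z[P]}\Spec\Z[Q]$, with no saturation needed because $\theta$ is integral and saturated; if it is not saturated, we pass to an fppf/Kummer modification.

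With this chart, $\ul f$ factors as
\[
\ul X\to \ul S\times_{\Spec\Z[P]}\Spec\Z[Q]\to\ul S,
\]
where both maps are flat and locally finitely presented, so $\ul f$ is flat and locally finitely presented. Hence $\ul f$ is universally open as a scheme map. For universal openness in fs log schemes, let $S'\to S$ be arbitrary. The base change $f'$ is again exact, log flat and locally finitely presented, since all three properties are stable under fs base change. Running the same argument on $f'$ shows that $\ul{f'}$ is open, which gives universal openness.

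The main obstacle is the chart improvement step: producing an integral, ideally saturated, chart from exactness. Integrality alone might leave the fs fiber product differing from the scheme fiber product by a saturation map that is finite surjective but not flat. To handle this, I would first try to show that exact plus injective on sharp fs monoids with $P$ sharp forces $\theta$ to be integral, using \cite[Proposition I.4.6.3]{Ogu}. I would then argue that openness survives the saturation map, because the finite surjective morphism $\ul{Y^\sat}\to\ul Y$ (\cref{v.1}) is closed and surjective. Specifically, the image of an open set under the composite can be computed via flatness of the later factor applied to the quotient topology coming from the submersive map.
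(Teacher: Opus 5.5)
Your reductions (to openness, since the hypotheses are stable under fs base change, and to a local chart $\theta\colon P\to Q$) are fine. The central step is false, however, and so is the intermediate goal it serves.

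Exactness does not make the chart integral. \cite[Proposition I.4.6.3]{Ogu} gives integrality only under hypotheses such as a valuative source, which is how it enters \cref{lem:ValuativeIntegral}. A sharp fs monoid of rank $\geq 2$ is never valuative. In fact $\ul f$ need not be flat for any choice of chart.

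Let $P=\{(a,b,c,d)\in\N^4 : a+b=c+d\}$, generated by $x_1=(1,0,1,0)$, $x_2=(1,0,0,1)$, $x_3=(0,1,1,0)$, $x_4=(0,1,0,1)$. Let $f=\A_\theta\colon\A_P\to\A_P$ with $\theta=2\cdot\colon P\to P$.
\begin{enumerate}
\item Since $\theta$ is Kummer and $P$ is saturated, $\theta$ is injective, local and exact. The same holds for the induced maps $2\colon\overline{P_F}\to\overline{P_F}$ on characteristic monoids at every point, so $f$ is exact.
\item $f$ is log flat, since the identity is the strict flat map to $\A_P\times_{\A_P,\theta}\A_P$, and $f$ is of finite presentation.
\item $\ul f$ is finite of generic degree $[P^\gp:2P^\gp]=8$. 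Its fiber over a point of the vertex $V(t^p : p\in P^+)$ has as basis the monomials $t^q$ with $q\notin\bigcup_i(2x_i+P)$. These are the $q=(a,b,c,d)\in P$ with $\max(a,b)\leq 1$ or $\max(c,d)\leq 1$, and there are $10$ of them.
\end{enumerate}
So $\ul f$ is not flat, and $\theta$ is an injective exact local homomorphism of sharp fs monoids that is not integral. Here $\ul f$ is still open, being finite and surjective onto a normal scheme, as the lemma predicts.

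Two further remarks. First, the saturation problem you anticipate does not occur. A chart $S\to\A_P$ is strict, so $S\times_{\A_P}\A_Q$ is already fs with underlying scheme $\ul S\times_{\Spec\Z[P]}\Spec\Z[Q]$. The only obstruction is non-flatness of $\Z[P]\to\Z[Q]$.

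Second, your fallback uses submersiveness in the wrong direction. A quotient map lets you test openness of subsets of its \emph{target} via preimages. Precomposing an open map with a finite surjection need not give an open map. Used in the correct direction, this is exactly where exactness enters.

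The paper simply cites \cite[Proposition 5.6, Remark 5.7]{quasi-section} and \cite[Proposition 2.5]{KatoLog2}. With the tools of this paper one can argue as follows.
\begin{enumerate}
\item Locally on $S$, choose a dividing cover $\pi\colon S'\to S$ such that $f'\colon X'=X\times_S S'\to S'$ is integral \cite[Theorem 1.1]{integrallogblowup}.
\item An integral log flat morphism has flat underlying morphism, so $\ul{f'}$ is open.
\item Let $U\subset X$ be open with preimage $U'\subset X'$. Exactness of $f$ makes $\ul{X'}\to\ul X\times_{\ul S}\ul{S'}$ surjective, as in the proof of \cref{v.2}. This gives $f'(U')=\pi^{-1}(f(U))$.
\item Since $\pi$ is proper and surjective, it is a topological quotient map, so $f(U)$ is open.
\end{enumerate}
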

\begin{proof}
Since the conditions on $f$ are preserved under base change, we only need to show that $f$ is open.
For a proof, 
we refer to \cite[Proposition 5.6, Remark 5.7]{quasi-section} and \cite[Proposition 2.5]{KatoLog2}.
\end{proof}

Recall from \cite[Definition 3.1.4]{logDM} and \cite{Par} that a \emph{dividing cover} 
of qcqs fs log schemes is a surjective proper log étale monomorphism.
Every log blow-up is a dividing cover by \cite[Example A.11.1]{logDM}. 
Note that a blow-up of schemes is proper and surjective, 
so it is a $v$-cover by \cite[Remark 2.5 (1a)]{Rydh}.
For fs log schemes,
every dividing cover is proper and surjective.
It is therefore reasonable to expect that log blow-ups, or more generally dividing covers, 
are examples of covers in the log $h$-topology. 
This is shown in the following proposition.

\begin{proposition}\label{prop:dividingcoversubtrusive}
Let $f$ be a dividing cover of qcqs fs log schemes. Then $f$ is universally subtrusive.
\end{proposition}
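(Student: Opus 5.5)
The plan is to use the fact that dividing covers are stable under base change in fs log schemes, and then apply the criterion that a proper surjective morphism of schemes is universally subtrusive \cite[Remark 2.5 (1a)]{Rydh}.

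Let $f\colon Y\to X$ be a dividing cover and $X'\to X$ any morphism of fs log schemes. We must show that the fs pullback $f'\colon Y':=Y\times_X X'\to X'$ has subtrusive underlying morphism of schemes.

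\textbf{Step 1: base change.} I first show that $f'$ is again a dividing cover, i.e.\ surjective, proper, log étale, and a monomorphism.
\begin{itemize}
\item Log étaleness and being a monomorphism are preserved by fs base change, formally.
\item Properness of the underlying scheme map is preserved. The fs fiber product $\ul{Y'}$ maps to $\ul{Y}\times_{\ul{X}}\ul{X'}$ by a finite morphism (integralization followed by saturation, cf.\ \cite[Corollary III.2.2.4]{Ogu}). Composing with the base change of the proper map $\ul{f}$ gives properness.
\end{itemize}

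\textbf{Step 2: surjectivity.} This is the main obstacle, since \cref{example:surjectivebutnotuniversally} shows surjectivity is not automatically preserved under fs base change. The approach is to reduce to charts and log blow-ups.
\begin{itemize}
\item Zariski (or strict étale) locally on $X$, pick a chart $X\to\A_P$. By the structure of dividing covers (\cite[Appendix A]{logDM}, \cite{Par}), $f$ is locally dominated by a base change of a subdivision of fans $\Sigma\to\Spec P$. Working strict étale locally is harmless, since strict étale surjections are universally subtrusive.
\item It therefore suffices to treat the case where $f$ is the pullback of a toric proper birational map $\A_\Sigma\to\A_P$ that subdivides the cone.
\item Take a point $x'\in X'$ with chart $P\to\ol{\cM}_{X',x'}$. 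Lifting $x'$ amounts to finding a cone of $\Sigma$ containing the image of the dual cone of $\ol{\cM}_{X',x'}$, plus a compatible point of the underlying scheme. Since $\Sigma$ covers the whole cone, the image lies in some cone of the subdivision. This gives a chart of the fs fiber product near $x'$ with nonempty fiber over $x'$.
\item Concretely, I would show that the fiber of $f'$ over $x'$ is the fs fiber product of $\pt_{\ol{\cM}_{X',x'}}$ with a subdivision over $\pt_{P}$. This fiber is nonempty because subdivisions are surjective on the level of monoid-valued points. An alternative is to use the valuative description: any point of $X'$ receives a map from a log valuation ring (\cref{thm:logvaluationringsliftspecializations}), and log valuation rings lift uniquely along subdivisions.
\end{itemize}

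\textbf{Step 3: conclusion.} By Steps 1 and 2, $\ul{f'}$ is proper and surjective. Hence it is subtrusive by \cite[Remark 2.5 (1a)]{Rydh}. Since $X'\to X$ was arbitrary, $f$ is universally subtrusive.
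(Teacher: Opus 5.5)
Your route is the paper's. The fs base change of $f$ stays proper: your observation that $\ul{Y'}\to\ul{Y}\times_{\ul{X}}\ul{X'}$ is finite is exactly what justifies the paper's remark that properness survives pullback. It also stays surjective, so it is subtrusive by \cite[Remark 2.5]{Rydh}. The only difference is that the paper takes universal surjectivity of dividing covers from \cite[Proposition A.11.6]{logDM}, while you reprove it. Log \'etaleness and the monomorphism property play no role in the argument.

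In that reproof, one sentence of Step 2 is false as written. Let $M:=\ol{\cM}_{X',x'}$ and let $\phi\colon P\to M$ be the induced homomorphism. The image of the dual cone of $M$ in the dual cone of $P$ need not lie in a single cone of $\Sigma$. For example, take $X'=\A_{P,k}$ for a field $k$ and sharp $P$, with $x'$ its origin, and any nontrivial subdivision; then the image is the whole dual cone of $P$. Lifting $x'$ also does not require this.

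What you actually need is a cone $\sigma_i$, with monoid $Q_i$, such that $M\to N_i:=(M\oplus_P Q_i)^{\sat}$ is local. Equivalently, some local homomorphism $u\colon M\to\N$ satisfies $u\circ\phi\in\sigma_i$. This holds because, for any local $u$, the element $u\circ\phi$ of the dual cone of $P$ lies in $|\Sigma|$ and hence in some $\sigma_i$. For that $i$, the ideal of $N_i$ generated by $M^+$ is proper, so the fiber of $f'$ over $x'$ is nonempty. This is presumably what your sentence about monoid-valued points intends.

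Your valuative alternative also works and is not circular. \cref{prop:diviningcover is a logvcover}, whose proof does not use the present statement, together with \cref{prop:logvcoverbasechange} and the surjectivity of log $v$-covers, gives universal surjectivity directly.
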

\begin{proof}
By \cite[Proposition A.11.6]{logDM} a dividing cover is universally surjective. 
To conclude, observe that proper morphisms are closed under pullbacks.
\end{proof}

The following result generalizes \cite[Remark 2.5 (2a)]{Rydh} to log schemes.

\begin{proposition}\label{prop:logflatsubtrusive}
Let $f\colon X\to S$ be a universally surjective log flat locally finitely presented morphism of fs log schemes. Then $f$ is universally subtrusive.
\end{proposition}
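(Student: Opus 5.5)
Since the hypotheses on $f$ (universally surjective, log flat, locally finitely presented) are all stable under base change in the category of fs log schemes, it suffices to show that $f$ is subtrusive. Equivalently, for every base change $f'\colon X'\to S'$, the underlying morphism $\ul{f'}$ is subtrusive. By \cite[Remark 2.5]{Rydh}, a surjective universally open morphism of schemes is universally subtrusive, so the goal reduces to showing that $\ul{f}$ is open. Openness is not automatic here: \cref{lem:logflatexact} gives universal openness only when $f$ is \emph{exact}. The plan is therefore to reduce to the exact case by a log modification of the base, using dividing covers.

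Subtrusiveness is Zariski local on $S$, so we may assume $S$ is qcqs with an fs chart, and $X$ is covered by finitely many opens with charts. Concretely, work strict fppf locally, which is allowed since strict fppf covers of schemes are universally subtrusive and subtrusiveness descends along compositions. Take a chart $P\to Q$ of $f$ with $P\to Q$ injective and $X\to S\times_{\A_P}\A_Q$ strict flat. By the standard result on log flat morphisms (cf.\ Kato, or \cite[A.11]{logDM}), after a log blow-up $S_1\to S$ along the fan of $P$, the base change $f_1\colon X_1:=X\times_S S_1\to S_1$ becomes exact; at the chart level, subdividing $\Spec P$ makes $P\to Q$ integral and hence exact after saturation. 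Then $f_1$ is exact, log flat and locally finitely presented, so it is universally open by \cref{lem:logflatexact}. It is also surjective, being a base change of the universally surjective $f$. Hence $\ul{f_1}$ is universally subtrusive by \cite[Remark 2.5]{Rydh}, and by \cref{prop:exact} $f_1$ is universally subtrusive as a morphism of fs log schemes.

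Next, consider the composite $X_1\to S_1\to S$. The map $S_1\to S$ is a dividing cover, so it is universally subtrusive by \cref{prop:dividingcoversubtrusive}. Hence $X_1\to S$ is universally subtrusive. This composite factors as $X_1\to X\to S$, and by Rydh's cancellation property (if $f\circ g$ is subtrusive then $f$ is, which is the correct direction for the outer map; cf.\ \cite[p.\ 190]{Rydh}: if $f\circ g$ is subtrusive then $f$ is), $f$ is subtrusive. Applying the same argument after an arbitrary base change gives universal subtrusiveness. Note that the paper's stated cancellation ``then $g$ is'' should be read as the outer map, which is the standard fact.

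The main obstacle is the exactification step. We must produce a dividing cover $S_1\to S$ such that $f\times_S S_1$ is exact and remains log flat and finitely presented. Locally this is the monoid statement that a subdivision of the fan of $P$ renders $P\to Q$ locally integral (Kato's/Tsuji's semistable reduction for monoids). One then checks that saturated base change preserves strict flatness of $X\to S\times_{\A_P}\A_Q$. If global gluing of the subdivisions is an issue, I would instead argue pointwise, since subtrusiveness is checked on specializations and constructible topology, and use quasi-compactness to reduce to finitely many charts, refining the subdivisions to a common one.
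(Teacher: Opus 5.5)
Your proposal is correct and is essentially the paper's proof. Like the paper, you reduce to plain subtrusiveness by stability of the hypotheses under base change, work Zariski locally on $S$, and choose a dividing cover $g\colon S'\to S$ with $f'\colon X\times_S S'\to S'$ exact; the paper simply cites \cite[Theorem 1.1]{integrallogblowup} for this, so your strict fppf localization and chart-level subdivision discussion are not needed. You then get $f'$ subtrusive from \cref{lem:logflatexact} and surjectivity, $g$ subtrusive from \cref{prop:dividingcoversubtrusive}, and conclude by cancellation since $g\circ f'$ factors through $f$; your reading that it is the outer map which inherits subtrusiveness is the correct one and is how the paper's proof uses it.
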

\begin{proof}
It suffices to show that $f$ is subtrusive since the 
conditions on $f$ are closed under pullbacks;
see \cite[Proposition IV.4.1.2 (3)]{Ogu} for log flat morphisms.
We may work Zariski locally on $S$.
By \cite[Theorem 1.1]{integrallogblowup} we may assume there is a dividing cover 
$g\colon S' \to S$ such that the projection $f'\colon X \times_S S' \to S'$ is exact.
\cref{lem:logflatexact} implies $\ul{f'}$ is universally open, 
so that $f'$ is subtrusive.
\cref{prop:dividingcoversubtrusive} shows $g$ is subtrusive.
Thus $gf'$ and hence $f$ is subtrusive.
\end{proof}

The next result is a generalization of \cite[Proposition 2.7 (vi)]{Rydh}.
\begin{lemma}\label{lem:extendinglogvaluationring}
Let $(V,P)$ be a log valuation ring and $f\colon X \to \Spec{(V,P)}$ 
a subtrusive morphism of quasi-coherent integral log schemes.
There exists a log valuation ring $(W,Q)$ and a morphism $g \colon \Spec{(W,Q)} \to X$ 
such that the composite
\begin{equation}
\label{equation:extensionlogvaluationrings}
\Spec{(W,Q)} \to X \to \Spec{(V,P)}
\end{equation}
is the spectrum of an extension of log valuation rings $(V,P) \to (W,Q)$.
\end{lemma}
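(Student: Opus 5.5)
Since $f$ is subtrusive, the ordered pair $\mathfrak{m}\rightsquigarrow(0)$ in $\Spec V$ lifts to an ordered pair $x\rightsquigarrow x'$ in $\ul{X}$ with $x\mapsto\mathfrak{m}$ and $x'\mapsto (0)$. This is exactly the input used in the scheme case \cite[Proposition 2.7 (vi)]{Rydh}.

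The first step is to apply \cref{thm:logvaluationringsliftspecializations} to $X$ and this specialization. It produces a log valuation ring $(W,Q_0)$ and a morphism $g\colon\Spec(W,Q_0)\to X$ sending the closed point to $x$ and the generic point to $x'$. Let $K$ be the fraction field of $V$, and look at the composite of underlying rings $V\to W$. The closed point of $\Spec W$ goes to $\mathfrak{m}$, so $V\to W$ is local. The generic point of $\Spec W$ goes to $(0)$, so the kernel of $V\to W$ is a prime lying under the zero ideal; since $V$ is a domain this kernel is $(0)$ and $V\to W$ is injective. Thus $V\to W$ is an extension of valuation rings.

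Looking into the proof of \cref{thm:logvaluationringsliftspecializations}, the valuation ring $W$ may moreover be chosen freely to dominate the local ring at $x$ inside the residue field of $x'$. Strictly, one must first replace $X$ by the integral closed subscheme $\overline{\{x'\}}$ with its induced log structure, which is still quasi-coherent integral, localized at $x$.

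The second step is to arrange the log part, that is, to get a monoid map $P\to Q$ compatible with the structure maps. The composite $\Spec(W,Q_0)\to X\to\Spec(V,P)$ is already a morphism of log schemes, so it yields a homomorphism of log rings $(V,P)\to(W,\Gamma(\cM))$. By the observation in the proof of full faithfulness, $\Gamma(\Spec(W,Q_0),\cM)\cong Q_0$. Hence this homomorphism is a map $(V,P)\to(W,Q_0)$ of log rings, and we may take $Q=Q_0$. By the previous step its underlying ring map is an extension of valuation rings, so \eqref{equation:extensionlogvaluationrings} is the spectrum of an extension of log valuation rings.

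The main obstacle is the first step. The theorem as stated only gives the existence of some valuation ring lifting the specialization, and there is an issue of points versus fields. If the specialization in the theorem is realized only after passing to $\overline{\{x'\}}$, we must check that the log structure pulled back to this reduced closed subscheme still admits integral charts. We also need the maximal monoid $P$ constructed there to map compatibly. I would handle this by running the Zorn's lemma construction of that proof directly on a chart $M\to\cO_{X,x}\to\kappa(x')$ composed into $W$. The rest is formal.
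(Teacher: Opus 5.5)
Your proposal is correct and matches the paper's proof: lift the specialization using subtrusiveness, apply \cref{thm:logvaluationringsliftspecializations} to $X$, and read off the extension. Your explicit checks that $V\to W$ is injective and local, and that the monoid map comes from $\Gamma(\Spec(W,Q_0),\cM)\cong Q_0$, fill in what the paper leaves as ``by construction.'' The worry in your last paragraph is unnecessary, because that theorem is stated for an arbitrary quasi-coherent integral log scheme and an arbitrary specialization, so you can use it as a black box without reopening its proof.
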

\begin{proof}
Since the morphism of log schemes $X \to \Spec{(V,P)}$ is subtrusive, 
there is a lift $x' \rightsquigarrow x$ of the specialization of points 
$\mathfrak{m} \rightsquigarrow (0)$ to $X$ \cite[Proposition 2.7]{Rydh}. 
Applying \cref{thm:logvaluationringsliftspecializations} to $X$, 
we obtain a log valuation ring $(W,Q)$ and a morphism $\Spec(W,Q)\to X$ 
such that the specialization $\mathfrak{m} \rightsquigarrow (0)$ lifts $x' \rightsquigarrow x$,
where $\mathfrak{m}$ is the maximal ideal of $W$ and $(0)$ is the zero ideal of $W$.
Hence we obtain \eqref{equation:extensionlogvaluationrings}, 
which by construction gives an extension of log valuation rings $(V,P) \to (W,Q)$.
\end{proof}

By \cite[Corollary 2.9]{Rydh}, a quasi-compact morphism of schemes $f\colon X \to S$ is universally subtrusive if and only if for every valuation ring $V$ and morphism $g\colon \Spec{V} \to S,$
there is an extension $u\colon V\to W$ of valuation rings
and a commutative diagram
$$
\begin{tikzcd}
\Spec{W} \ar[r] \ar[d,"\Spec{u}"'] & X \ar[d, "f"] \\
\Spec{V} \ar[r,"g"] & S
\end{tikzcd}
$$
We use an analog of this criterion to define log $v$-covers of log schemes.

\begin{definition}
\label{v.4}
A morphism of
qcqs fs log schemes $f\colon X\to S$ is a \emph{log $v$-cover} if for every log valuation ring $(V,P)$ 
and morphism of log schemes $g\colon \Spec(V,P)\to S$,
there is an extension of log valuation rings $u\colon (V,P)\to (W,Q)$
and a commutative diagram
\begin{equation}
\label{v.4.1}
\begin{tikzcd}
\Spec(W,Q)\ar[r]\ar[d,"\Spec{u}"']&
X\ar[d,"f"]
\\
\Spec(V,P)\ar[r,"g"]&
S
\end{tikzcd}
\end{equation}
The \emph{log $v$-topology} is the topology on the category of qcqs fs log schemes 
generated by the finite covering families $\{U_i\to X\}_{i\in I}$ such that 
$\amalg_{i\in I}U_i\to X$ is a log $v$-cover. 
We use lv as a shorthand for the log $v$-topology.
\end{definition}

\begin{remark}
Any log $v$-cover is surjective according to \cref{thm:logvaluationringsliftspecializations}.
\end{remark}

\begin{proposition}\label{prop:logvcoverbasechange}
Log $v$-covers are closed under base change.
\end{proposition}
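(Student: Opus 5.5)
The argument is a direct diagram chase using the universal property of the fiber product in the category of qcqs fs log schemes. Let $f\colon X\to S$ be a log $v$-cover and $S'\to S$ a morphism of qcqs fs log schemes, and write $f'\colon X':=X\times_S S'\to S'$ for the base change, formed in fs log schemes. Since $f$ is quasi-compact and quasi-separated, the underlying morphism of $X'\to S'$ is quasi-compact and quasi-separated, so $X'$ is qcqs. This holds because $\ul{X'}\to \ul{X}\times_{\ul{S}}\ul{S'}$ is finite (saturation after integralization), hence qcqs. Therefore $f'$ is a morphism in the category where \cref{v.4} applies.

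Next, let $(V,P)$ be a log valuation ring and $g'\colon \Spec(V,P)\to S'$ a morphism. Applying \cref{v.4} to $f$ and the composite $g\colon\Spec(V,P)\to S'\to S$ gives an extension $u\colon (V,P)\to (W,Q)$ and a morphism $h\colon \Spec(W,Q)\to X$ with $f\circ h=g\circ \Spec{u}$. We also have $g'\circ\Spec{u}\colon \Spec(W,Q)\to S'$, and the two maps to $S$ agree. It remains to produce the dashed arrow $\Spec(W,Q)\to X'$.

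The main point to check is that $\Spec(W,Q)$ is itself an fs log scheme, or at least that morphisms from it into the fs fiber product are determined by the two components. A valuative monoid is saturated, but it need not be finitely generated, so $\Spec(W,Q)$ is generally not fs. I would handle this as follows. The fiber product in fs log schemes is obtained from the fiber product in all log schemes by integralization and then saturation. Both operations are right adjoint–type universal constructions for maps from integral, respectively saturated, log schemes. More precisely, integralization and saturation are right adjoints to the inclusions of integral and saturated log schemes among coherent ones. So for any saturated, quasi-coherent integral log scheme $T$, we have $\Hom(T,X')=\Hom(T,X)\times_{\Hom(T,S)}\Hom(T,S')$.

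Since $Q$ is valuative, hence integral and saturated by the listed properties, $\Spec(W,Q)$ is saturated and integral. The universal property then yields the required map $\Spec(W,Q)\to X'$ making the square for $f'$ commute. If the adjunction for non-coherent targets is delicate, I would instead argue Zariski locally with fine charts, as in the proof of \cref{cor:LoghLogv}. The map $\Spec(W,Q)\to X\times S'$ factors through the saturated integral subscheme of the log-scheme fiber product, because the monoid homomorphisms into $Q$ from the chart pushout factor through its integral saturated image in the group. This factorization is the step I expect to need the most care.
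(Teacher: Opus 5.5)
Your proposal is correct and follows the same route as the paper. Compose $\Spec(V,P)\to S'\to S$, use that $f$ is a log $v$-cover to obtain $(W,Q)$ and $\Spec(W,Q)\to X$, and induce $\Spec(W,Q)\to X'$ from the fiber product. The paper's phrase ``unique induced morphism'' leaves two points implicit, and you check both soundly: that $X'$ is qcqs, and that the fs fiber product still has its universal property for the integral, saturated but non-fs test object $\Spec(W,Q)$, via the integralization and saturation universal properties or locally with charts.
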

\begin{proof}
Suppose $f \colon X \to S$ is a log $v$-cover of 
qcqs fs log schemes and $S' \to S$ is a morphism of qcqs fs log schemes. 
Consider the pullback morphism $f' \colon X' \to S'$. 
If $(V,P)$ is a log valuation ring with a morphism $\Spec{(V,P)} \to S',$ 
we can form the composite $\Spec{(V,P)} \to S' \to S$. 
Since $f$ is a log $v$-cover, by assumption, 
there is a log valuation ring $(W,Q)$ and a morphism $\Spec{(W,Q)} \to X$ such that 
$(V,P) \to (W,Q)$ is an extension of log valuation rings. 
We obtain the commutative diagram
\[
\begin{tikzcd}
\Spec{(W,Q)} \ar[rr, bend left=25] \ar[d] \ar[r, dashed]
& X' \ar[r] \ar[d,"f'"] & X \ar[d, "f"] \\
\Spec{(V,P)} \ar[r] & S' \ar[r] & S
\end{tikzcd}
\]
There is a unique induced morphism $\Spec{(W,Q)}\to X'$ rendering the diagram commutative, 
so that $f'$ is a log $v$-cover.
\end{proof}

\begin{proposition}\label{prop:logvcovercomposition}
Log $v$-covers are closed under composition.
\end{proposition}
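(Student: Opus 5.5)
The argument is a direct diagram chase using only \cref{v.4} and the fact that extensions of log valuation rings compose. Let $f\colon X\to Y$ and $h\colon Y\to S$ be log $v$-covers of qcqs fs log schemes. Fix a log valuation ring $(V,P)$ and a morphism $g\colon \Spec(V,P)\to S$.

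First, apply \cref{v.4} to $h$ and $g$. This gives an extension of log valuation rings $u\colon (V,P)\to (W,Q)$ and a morphism $g'\colon \Spec(W,Q)\to Y$ with $h\circ g'=g\circ \Spec(u)$.

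Next, apply \cref{v.4} to $f$ and $g'$. This gives an extension $u'\colon (W,Q)\to (W',Q')$ and a morphism $g''\colon \Spec(W',Q')\to X$ with $f\circ g''=g'\circ\Spec(u')$.

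Pasting the two commutative squares gives
\[
(h\circ f)\circ g''=h\circ g'\circ \Spec(u')=g\circ\Spec(u)\circ\Spec(u')=g\circ \Spec(u'\circ u).
\]
Here we use that $\Spec$ turns composition of log ring homomorphisms into composition of morphisms in reverse order.

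It remains to check that $u'\circ u\colon (V,P)\to (W',Q')$ is an extension of log valuation rings. This is the only point needing care, and it is routine. The composite is a homomorphism of log rings, since compatibility with the structure maps $P\to V$, $Q\to W$, $Q'\to W'$ composes. The underlying map $V\to W'$ is a composite of injective local homomorphisms of valuation rings, hence injective and local, so it is an extension of valuation rings. Because the definition of extension of log valuation rings imposes no condition on the monoid part beyond being a log ring map, nothing more is needed.

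Hence $h\circ f$ satisfies the lifting condition of \cref{v.4}. Finally, $h\circ f$ is a morphism of qcqs fs log schemes, since it is a morphism between qcqs objects. I expect no real obstacle.
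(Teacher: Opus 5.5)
Your proposal is correct and follows the paper's argument: you lift through the first cover, then through the second, and observe that the composite $(V,P)\to (W,Q)\to (W',Q')$ is again an extension of log valuation rings. Your explicit check that the underlying map of valuation rings stays injective and local is exactly the (implicit) point the paper uses to conclude.
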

\begin{proof}
Suppose $f \colon X \to S$ and $g \colon Y \to X$ are log $v$-covers of qcqs fs log schemes. 
Let $\Spec{(V,P)} \to S$ be a morphism for a log valuation ring $(V,P)$.
There is a commutative diagram
\[
\begin{tikzcd}
\Spec{(W',Q')}\ar[d]\ar[r,"\Spec{v}"]&
\Spec{(W,Q)}\ar[d]\ar[r,"\Spec{u}"]&
\Spec{(V,P)}\ar[d]
\\
Y\ar[r,"g"]&
X\ar[r,"f"]&
S
\end{tikzcd}
\]
for extensions of log valuation rings $(V,P)\xrightarrow{u} (W,Q)\xrightarrow{v} (W',Q')$.
To conclude,
observe that $(V,P)\to (W',Q')$ is an extension of log valuation rings.
\end{proof}

For the definition of fans we refer to \cite[\S II.1.9]{Ogu}.
The \textit{support} $|\Sigma|$ of a fan $\Sigma$ in a lattice $N$ is the union of all cones of $\Sigma.$
A \emph{morphism of fans} $\Delta\to \Sigma$ in lattices $N$ and $N'$ is a homomorphism of lattices $f \colon N \to N'$ such that for every cone $\sigma\in \Delta$ we have $f(\sigma) \subset \sigma'$ for some cone $\sigma'$ in $\Sigma$. 
A \textit{partial subdivision} is a morphism of fans such that the morphism of lattices is an isomorphism. It is called a \textit{subdivision} if $|\Delta| = |\Sigma|.$
We can regard a morphism of fans $\Delta\to \Sigma$ as a morphism of connected separated fs monoschemes, see \cite[Theorem II.1.9.3]{Ogu}. 
For an fs monoid $P$, we also regard the monoscheme $\Spec{P}$ 
as a fan with a single maximal cone dual to $P$.

For morphisms of saturated monoschemes $X,Y\to S$,
let $X\times_S Y$ denote the fiber product in the category of saturated monoschemes.
The said product exists by \cite[Proposition II.1.3.5]{Ogu} and differs from the 
fiber product in the category of monoschemes in general.

For a fan $\Sigma$,
let $\T_\Sigma$ be the fs log scheme whose underlying scheme is the toric variety over $\Z$ associated with $\Sigma$ with the compactifying log structure associated with the maximal torus.

\begin{remark}\label{exm:notafan}
In general, $\A_{P}$ is not a fan for a valuative monoid $P$
(since $\ol{P}$ need not be finitely generated). 
 \end{remark}

The following result is trivially true if $P=\N$ since then the pullback morphism $\Spec{P}\times_{\Sigma}\Delta\to \Spec{P}$ corresponds to a subdivision of $\N$ by 
\cite[Lemma A.3.11]{logDM}, which is necessarily an isomorphism. 
The next result is based on the insight that the dual fan of a valuative monoid
does not admit any subdivision other than itself.

\begin{theorem}\label{thm:subdivisionsoffans}
Let $f \colon \Delta\to \Sigma$ be a subdivision of fans, and let $P$ be a valuative monoid.
Then for every morphism of monoschemes $g \colon \Spec{P}\to \Sigma$,
the projection $\Spec{P}\times_{\Sigma}\Delta\to \Spec{P}$ is an isomorphism.
\end{theorem}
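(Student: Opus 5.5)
Since $\Spec{P}$ has a unique closed point, the morphism $g$ lands in an affine open $\Spec{Q_\sigma}\subset\Sigma$, where $\sigma\in\Sigma$ is a cone and $Q_\sigma=\sigma^\vee\cap M$ is its dual fs monoid. The restriction $f^{-1}(\Spec{Q_\sigma})\to\Spec{Q_\sigma}$ is again a subdivision, namely of the single cone $\sigma$. We may therefore assume $\Sigma=\Spec{Q}$ with $Q$ fs, and that $g$ is given by a homomorphism $\theta\colon Q\to P$.

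The plan is to show that some single cone $\tau\in\Delta$ admits a factorization $Q\to Q_\tau\to P$ of $\theta$, where $Q_\tau\supset Q$ is the dual monoid of $\tau$ in $Q^{\gp}$. Once such a $\tau$ is found, the fibre product is the saturated pushout $(P\oplus_Q Q_\tau)^{\sat}$, glued over all cones of $\Delta$. It then suffices to show:
\begin{enumerate}
\item[(a)] the cone $\tau$ gives a section $s$ of the projection $\Spec{P}\times_\Sigma\Delta\to\Spec{P}$;
\item[(b)] the projection is a monomorphism of saturated monoschemes.
\end{enumerate}
Part (b) follows since $\Delta\to\Sigma$ is a partial subdivision, hence a monomorphism: the lattices agree, so each $Q\to Q_\tau$ is an epimorphism after groupification, and monomorphisms are stable under base change. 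A monomorphism with a section is an isomorphism.

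For the existence of $\tau$, I translate into valuative language. Put $G:=P^{\gp}$; then $\theta^{\gp}\colon Q^{\gp}\to G$ determines the subgroup $P^*$ and a total preorder on $G/P^*$. Because $P$ is valuative, $\ol{P}^{\gp}$ is totally ordered. Hence the function $Q^{\gp}\to \ol{P}^{\gp}$, $m\mapsto \ol{\theta^{\gp}(m)}$, is a homomorphism to a totally ordered group that is nonnegative on $Q$. This is a generalized point of $|\sigma|$, i.e. a ``valuation'' of $Q$.

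To find a cone containing this point, one option is to embed $\ol{P}^{\gp}$ into a lexicographic product of copies of $\mathbb{R}$ via Hahn's theorem. I would instead argue directly with finitely many linear functionals. Each cone $\tau\in\Delta$ is cut out by finitely many $m_1,\dots,m_r\in Q^{\gp}$, namely the generators of $Q_\tau$. We need a $\tau$ with $\theta^{\gp}(Q_\tau)\subset P$, i.e. $\ol{\theta^{\gp}(m_i)}\ge 0$ for all $i$.

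The main obstacle is proving that such a $\tau$ exists. I would proceed by induction on the rank of the finitely generated subgroup $H$ of $\ol{P}^{\gp}$ generated by the images of $Q^{\gp}$. For rank one, $H\cong\Z$ or $H\subset\mathbb{R}$, and the ordered homomorphism is an honest point $v\in|\sigma|\subset N_{\mathbb{R}}$. It lies in some $\tau$ because $|\Delta|=|\sigma|$, and the inequalities then hold, using that $\tau$ is rational so the dual description applies.

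For the inductive step, the image of $Q^{\gp}$ generates a finitely generated ordered group $H$, which has a convex filtration $0=H_0\subset H_1\subset\cdots\subset H_k=H$ with $H_{i+1}/H_i$ archimedean. Let $v$ be the leading real point obtained from $H/H_{k-1}\hookrightarrow\mathbb{R}$. Since $|\Delta|=|\sigma|$, the point $v$ lies in $|\Delta|$; let $\tau_0$ be the smallest cone of $\Delta$ containing $v$. The cones of $\Delta$ containing $\tau_0$ form the star, which after passing to the quotient lattice by $\tau_0$ is a complete subdivision of the localization of $\sigma$ at the face determined by $v$. I then apply induction to the residual homomorphism into $H_{k-1}$, which is a valuative situation of smaller rank. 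This yields a cone $\tau\supset\tau_0$ in the star on which all $m_i$ are nonnegative lexicographically. The care needed is to check that functionals strictly positive on $v$ stay positive, and that the localized data still satisfy ``support equals support'', so that the induction closes.
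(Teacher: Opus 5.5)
Your proof is correct, but both halves take a different route from the paper.

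\textbf{What the paper does.} After reducing to $\Sigma=\Spec R$, the paper uses valuativity of $P$ to show that each chart $P\oplus_R Q_i$ of the fibre product is a localization $P_{F_i}$ of $P$. Hence $\Spec P\times_\Sigma\Delta$ is a union of open subsets of $\Spec P$. It then finds a single cone through which $g$ factors, by noting that the case $P=\N$ makes $f$ proper and quoting Ogus's valuative criterion (Theorem II.1.6.3) for $\Spec P^{\gp}\to\Delta$ over $\Spec P\to\Sigma$.

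\textbf{What you do instead.} You replace the localization computation by the formal ``monomorphism with a section'' argument. That step uses only integrality of stalks, $Q^{\gp}=Q_\tau^{\gp}$, and the fact that two cones meet in a common face (to handle maps landing in different charts); valuativity plays no role there. You replace the citation of the valuative criterion by a direct convex-geometric proof of it: peel off the leading archimedean quotient $H/H_{k-1}\hookrightarrow\mathbb{R}$, then induct through the star of $\tau_0$.

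\textbf{Comparison.} The paper's route is shorter, but it outsources exactly the point where non-discrete valuative monoids matter. Yours is self-contained and shows that valuativity enters only through the total order on $\ol{P}^{\gp}$.

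\textbf{Points to fix when writing it out.}
\begin{itemize}
\item State the inductive claim for a finite fan whose support is an arbitrary rational polyhedral cone $C$, not necessarily strongly convex, together with a homomorphism $M\to H$ that is nonnegative on $C^\vee\cap M$. The reason is that the residual support $(\sigma+\mathrm{span}\,\sigma_0)/\mathrm{span}\,\tau_0$, with $\sigma_0$ the minimal face of $\sigma$ containing $v$, generally contains a line. Its dual monoid $Q\cap\sigma_0^\perp$ need not generate $M'=\tau_0^\perp\cap M$. So the residual datum is not a subdivision of fans in the paper's sense, and ``complete'' is the wrong word.
\item Work in $M'$ rather than $v^\perp\cap M$, since $v$ may be irrational. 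On $M'$ the residual map does land in $H_{k-1}$, so the rank drops.
\item The two checks you flag both go through. If $\langle m,v\rangle>0$, the image of $\theta^{\gp}(m)$ in $\ol{P}^{\gp}$ is strictly positive, by convexity of $H_{k-1}$. If $\langle m,v\rangle=0$ with $m\in\tau^\vee$, then $m\in\tau_0^\perp$, because $v$ lies in the relative interior of $\tau_0$.
\end{itemize}
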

\begin{proof}
As noted above, the claim holds if $P=\N$. 
This implies that $f$ is proper in the sense of \cite[Definition II.1.6.1]{Ogu}.

Now, we work with a general $P$.
Since $\Spec{P}\to \Sigma$ factors through a cone, we may assume $\Sigma=\Spec{R}$ for some fs monoid $R$.
Let $\{\Spec{Q_1},\ldots,\Spec{Q_n}\}$ be the cones of $\Delta$.
We set $P_i:=P\oplus_R Q_i$ for $1\leq i\leq n$.
Observe that we have $P^\gp\cong P_i^\gp$.

Consider the submonoid $F_i:=\{x\in P:-x\in P_i\}$ of $P$.
If $x+y\in F_i$ and $x,y\in P$,
then $-x=(-x-y)+y\in P_i$, so $x\in F_i$.
This shows that $F_i$ is a face of $P$.
Furthermore, if $x\in P_i$ and $x\notin P$, then $-x\in P$ since $P$ is valuative, so $x\in P_i^*$.
Hence we have $P_{F_i}\cong P_i$.
Thus $\Spec{P}\times_\Sigma \Delta$ is the gluing of the open subschemes 
$\Spec{P_{F_i}}$ of $\Spec{P}$.
We proof will be finished if $P_{F_i}\cong P$ for some $i$.
The image of the composite $\Spec P^\gp\to \Spec P\to \Sigma$ lies in the generic point of $\Sigma$, so the composite lifts to $\Delta$.
Since $f$ is proper,
the valuative criterion \cite[Theorem II.1.6.3]{Ogu} yields the lifting in the diagram
\[
\begin{tikzcd}
\Spec{P^\gp}\ar[d]\ar[r]&
\Delta\ar[d]
\\
\Spec{P}\ar[ru,dashed]\ar[r]&
\Sigma
\end{tikzcd}
\]
The morphism $\Spec{P}\to \Delta$ factors through $\Spec{Q_i}$ for some $1\leq i\leq n$, 
and we obtain $P_{F_i}\cong P$.
\end{proof}

\begin{proposition}\label{prop:diviningcover is a logvcover}
A dividing cover of qcqs fs log schemes
$f \colon X \to S$ is a log $v$-cover.
\end{proposition}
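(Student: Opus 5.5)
Given a log valuation ring $(V,P)$ and a morphism $g\colon \Spec(V,P)\to S$, the plan is to show that $g$ lifts to $X$ with no extension at all, i.e.\ we can take $(W,Q)=(V,P)$ and $u=\id$. Since $f$ is a monomorphism, this amounts to showing that the base change $X\times_S \Spec(V,P)\to \Spec(V,P)$ is an isomorphism. The fiber product here is in fs log schemes, and $\Spec(V,P)$ is not fs in general, so we first reduce to fs charts. The lifting problem is Zariski local on $S$. Moreover, $\Spec(V,P)$ is local, so its closed point maps into a chart neighbourhood. Hence we may assume $S$ has an fs chart $R$ and $g$ is induced by a homomorphism $R\to P$.

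Next, by the structure of dividing covers (\cite[Proposition A.11.5]{logDM}, or the fact that dividing covers are dominated Zariski locally by log blow-ups, which are pulled back from subdivisions of fans), we may assume, after shrinking and refining, that $X\to S$ is dominated by $S\times_{\A_R}\T_\Delta\to S$ for some subdivision $\Delta\to\Spec R$. Here we replace $\Spec R$ by its dual fan. It suffices to lift along the refinement, because a lift to the refinement composes to a lift to $X$, and we may keep $u=\id$.

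So the problem becomes lifting $\Spec(V,P)\to \A_R$ to $\T_\Delta$. The monoid part of this lift is supplied by \cref{thm:subdivisionsoffans}. The composite $\Spec P\to\Spec R$ of monoschemes satisfies $\Spec P\times_{\Spec R}\Delta\cong\Spec P$, and by the proof of that theorem this happens because $R\to P$ extends to $Q_i\to P$ for some cone $\Spec Q_i$ of $\Delta$. The homomorphism $Q_i\to P\to V$ then defines a morphism $\Spec(V,P)\to\A_{Q_i}\subset\T_\Delta$ compatible with the map to $\A_R$. It remains to check that this lands in the fs fiber product with $S$, i.e.\ that it is compatible with $g$. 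This compatibility holds because log structures are determined by the chart maps and $\Spec(V,P)$ is determined by $P\to V$ via the fully faithfulness proposition.

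The main obstacle is the reduction step. We need that a general dividing cover (a proper log étale surjective monomorphism) is, Zariski locally on $S$, refined by a log blow-up or a toric subdivision pullback, and that lifting against that refinement suffices. We also need to handle the fact that $P$ is not finitely generated, so the lift has to be produced at the level of monoids through the cone $Q_i$ rather than through fs fiber products. If the refinement result is unavailable in the needed form, the fallback is this: use that $f$ is a monomorphism and proper, so that lifting the generic point $\Spec(\mathrm{Frac}\,V,P^\gp)$ (where $f$ is an isomorphism over the trivial locus) extends uniquely by a log valuative criterion. That criterion would in turn be deduced from \cref{thm:subdivisionsoffans} applied to local charts of $f$.
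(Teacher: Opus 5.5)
Your proposal is correct and shares the paper's skeleton. You localize on $S$, which is legitimate because the closed point of $\Spec V$ lands in a chart neighbourhood. You use \cite[Proposition A.11.5]{logDM} to replace $X$ by $S\times_{\A_R}\T_\Delta$. You feed the monoid map $R\to P$ into \cref{thm:subdivisionsoffans}. The difference is in the last step.

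The paper uses that $\ul f$ is proper surjective, hence a $v$-cover of schemes. This gives an extension $V\to W$ and a lift $\Spec W\to\ul X$, which the paper then ``combines'' with \cref{thm:subdivisionsoffans} to obtain $\Spec(W,P)\to X$. You instead take $u=\id$ and build the lift directly from the cone: the composite $R\to Q_i\to P\to V$ gives $\Spec(V,P)\to S\times_{\A_R}\A_{Q_i}\subset S\times_{\A_R}\T_\Delta$.

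Your route buys three things.
\begin{itemize}
\item It proves more: a dividing cover lifts every $\Spec(V,P)\to S$ with no extension, a log analogue of the valuative criterion for proper monomorphisms.
\item It avoids the scheme-theoretic $v$-property altogether.
\item It makes explicit what the paper's ``combined'' leaves implicit. An arbitrary scheme-level lift $\Spec W\to\ul X$ need not be compatible with the monoid data. Already for $S=\pt_{\N^2,k}$, $X$ its log blow-up with $\ul X=\P^1_k$, and $P=k^*\oplus\{(a,b)\in\Z^2 : (a,b)\geq_{\mathrm{lex}}0\}$ receiving $\N^2$ by inclusion, a log lift can only pass through one particular torus-fixed point of $\P^1_k$. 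So the map really has to be produced from $Q_i$, as you do.
\end{itemize}
Your route also sidesteps the fact that $(W,P)$ is a log valuation ring only after enlarging $P$ to $P\oplus_{V^*}W^*$.

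To make your compatibility step airtight, state two things explicitly. First, $g$ corresponds to a homomorphism $R\to\Gamma(\Spec(V,P),\cM)\cong P$. Second, since $S\to\A_R$ is strict, the log fiber product $S\times_{\A_R}\A_{Q_i}$ already has the fs chart $Q_i$, so its universal property applies to the non-fs log scheme $\Spec(V,P)$. The monomorphism remark and the fallback via a log valuative criterion can be dropped.
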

\begin{proof}
Let $\Spec{(V,P)} \to S$ be a morphism of log schemes,
where $(V,P)$ is a log valuation ring.
To find \eqref{v.4.1}
we can work Zariski locally on $S$.
Hence we may assume that $S$ has a chart $M$ with $M$ sharp,
see \cite[Proposition II.2.3.7]{Ogu}.
By \cite[Proposition A.11.5]{logDM}
there exists a subdivision of fans $\Sigma\to \Spec{M}$ such that $f$ factors through the projection $S\times_{\A_M} \T_\Sigma\to X$.
It suffices to find \eqref{v.4.1} for this projection,
so we may assume $X=S\times_{\A_M} \T_\Sigma$.
Since $f$ is proper and surjective,
$\ul{f}$ is a $v$-cover.
Thus there exists an extension of valuation rings $V\to W$ and a commutative diagram
\[
\begin{tikzcd}
\Spec{W}\ar[d]\ar[r]&\ul{X}\ar[d,"\ul{f}"]
\\
\Spec{V}\ar[r]&
\ul{S}
\end{tikzcd}
\]
When combined with \cref{thm:subdivisionsoffans}
we obtain a commutative diagram
\[
\begin{tikzcd}
\Spec{(W,P)}\ar[d]\ar[r]&X\ar[d,"f"]
\\
\Spec{(V,P)}\ar[r]&
S
\end{tikzcd}
\]
To conclude, 
observe that the homomorphism of log rings $(V,P)\to (W,P)$ is an extension of log valuation rings.
\end{proof}

The next theorem generalizes \cite[Corollary 2.9]{Rydh} to fs log schemes. 
\begin{theorem}\label{thm:mainsubtrusive}
Let $f \colon X \to S$ be a morphism of qcqs fs log schemes.
Then $f$ is universally subtrusive if and only if $f$ is a log $v$-cover.
\end{theorem}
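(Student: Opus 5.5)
We prove the two implications separately; the direction "universally subtrusive $\Rightarrow$ log $v$-cover" follows almost formally from earlier results.

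\emph{Universally subtrusive implies log $v$-cover.} Let $g\colon \Spec(V,P)\to S$ be given with $(V,P)$ a log valuation ring. The morphism $\Spec(V,P)$ need not be fs, so we cannot pull back inside fs log schemes directly. Instead we work Zariski locally on $S$ and choose an fs chart $M\to \cO_S$; the map $M\to P$ is a homomorphism into a valuative monoid. We then approximate $\Spec(V,P)$ by the fs log schemes obtained from charts given by finitely generated submonoids of $P$ containing the image of $M$, saturated inside $P^\gp$. By \cref{prop:exact} together with \cref{lem:ValuativeIntegral}, the map $M\to P$ is integral. Hence the fibre product $X\times_S \Spec(V,P)$ in the category of integral (quasi-coherent) log schemes behaves well and is a limit of fs pullbacks of $f$. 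Each fs pullback is subtrusive by assumption. Since quasi-compact subtrusive maps are stable under such cofiltered limits (cf.\ \cite[Proposition 2.7]{Rydh}), the projection $X\times_S\Spec(V,P)\to\Spec(V,P)$ is subtrusive. Applying \cref{lem:extendinglogvaluationring} then produces the extension $(V,P)\to(W,Q)$ together with a map to $X$, which is exactly the diagram \eqref{v.4.1}.

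\emph{Log $v$-cover implies universally subtrusive.} By \cref{prop:logvcoverbasechange}, log $v$-covers are stable under base change, so it suffices to show that a log $v$-cover $f$ is subtrusive. We use Rydh's criterion \cite[Corollary 2.9]{Rydh} for the underlying quasi-compact morphism $\ul{f}$: given a valuation ring $V$ and a map $\Spec V\to \ul{S}$, we must find an extension $V\to W$ lifting to $\ul{X}$.

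The main obstacle is to upgrade $\Spec V\to\ul S$ to a log map $\Spec(V,P)\to S$ with $(V,P)$ a log valuation ring, since this requires a monoid map into $V$. We would first reduce to the case where $V$ has the same fraction field as the residue field of the image of the generic point. Zariski locally on $S$, with chart $M$, the composite $M\to V$ (multiplicatively) sends $M$ into $V$, possibly hitting $0$. We then enlarge the image in the spirit of the proof of \cref{thm:logvaluationringsliftspecializations}. Namely, we take the face $F$ of elements of $M$ not mapping to $0$, and let $P$ be a valuative monoid in $M^\gp$ dominating $M$ and extending $F\to V-\{0\}$ and the zero map on the complement; such a $P$ exists by Zorn's lemma exactly as in that proof. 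We then take the log structure associated with $P\to V$, modified to be logarithmic by adjoining units $V^*$; this yields a log valuation ring mapping to $S$.

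The log $v$-cover property then gives $(W,Q)$ over $(V,P)$ with a map to $X$. Forgetting log structures yields the required extension $V\to W$ over $\ul X$. Hence $\ul f$ is subtrusive, and since $f$ was an arbitrary log $v$-cover and the class is closed under base change, $f$ is universally subtrusive.
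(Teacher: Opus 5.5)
Your converse direction (log $v$-cover $\Rightarrow$ universally subtrusive) is essentially the paper's argument. The step you call the main obstacle, upgrading $\Spec V\to\ul S$ to a log map, is exactly \cref{thm:logvaluationringsliftspecializations}, which you can simply cite. Since $f$ is quasi-compact, subtrusiveness of $\ul f$ only requires lifting specializations $s'\rightsquigarrow s$ of $S$. So you need neither \cite[Corollary 2.9]{Rydh}, which is a criterion for \emph{universal} subtrusiveness, nor the unproved reduction to valuation rings with fraction field $\kappa(s')$. That reduction is unnecessary anyway, because the Zorn argument works for any homomorphism $M\to(V,\cdot)$. You are right that $P$ must be enlarged by $V^*$ to make the structure map logarithmic.

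The forward direction takes a genuinely different route, but as written it has a gap: a false step and an unproved one.

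\emph{The false step.} \cref{lem:ValuativeIntegral} concerns homomorphisms \emph{out of} a valuative monoid, and \cref{prop:exact} says nothing about integrality. A homomorphism \emph{into} a valuative monoid need not be integral. Take $S=\A_{\N^2,k}$, $V=k[[t]]$, $P=V\setminus\{0\}$, and the arc $x,y\mapsto t$. The chart map $\N^2\to P$, $(a,b)\mapsto t^{a+b}$, is local but not exact, since its group completion kills $e_1-e_2$. Integral local homomorphisms are exact, so this map is not integral. Hence nothing you cite makes $X\times_S\Spec(V,P)$ integral or identifies it with a limit of fs pullbacks.

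\emph{The unproved step.} Stability of quasi-compact subtrusive maps under cofiltered limits is not what \cite[Proposition 2.7]{Rydh} gives; the paper uses that result as a characterization of subtrusive morphisms.

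Both points can be repaired without any integrality claim.
\begin{enumerate}
\item Let $N$ run over finitely generated submonoids of $P$ containing the image of $M$, saturated in $N^{\gp}$. Saturating in $P^{\gp}$ can destroy finite generation. Then $\Spec(V,P)=\lim_N\Spec(V,N)$ with constant underlying scheme.
\item Put $Z_N:=X\times^{\mathrm{fs}}_S\Spec(V,N)$ and \emph{define} $Z:=\lim_N Z_N$; the transition maps are affine. Then $Z$ has integral charts locally, namely filtered colimits of fs charts, and it maps to $X$ and to $\Spec(V,P)$.
\item Each $\ul{Z_N}$ is quasi-compact. The set of pairs $(z,z')$ with $z\in\overline{\{z'\}}$, $z\mapsto\mathfrak m$, $z'\mapsto(0)$ is nonempty by subtrusiveness and closed in the product of the constructible topologies.
\item The inverse limit of these compact sets is therefore nonempty. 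This yields exactly the lifted specialization that the proof of \cref{lem:extendinglogvaluationring} needs, and that lemma then finishes your argument.
\end{enumerate}

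Compared with the paper: the paper puts the integrality on $f$ rather than on $\Spec(V,P)\to S$. It works Zariski and dividing locally on $S$, which is legitimate because dividing covers are log $v$-covers (\cref{prop:diviningcover is a logvcover}, resting on \cref{thm:subdivisionsoffans}) and log $v$-covers compose. It then uses an integralization theorem to assume $f$ integral. After that, the fibre product $X\times_S\Spec(V,P)$ in the category of all log schemes is already integral and subtrusive over $\Spec(V,P)$, so \cref{lem:extendinglogvaluationring} applies directly. Your approximation route, once repaired, bypasses the integralization theorem and the subdivision theorem, at the cost of the limit and compactness argument above.
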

\begin{proof}
Suppose $f$ is universally subtrusive in the category of fs log schemes, 
$(V,P)$ is a log valuation ring,
and $\Spec{(V,P)} \to S$ is a morphism.
We construct an extension of log valuation rings $(V,P)\to (W,Q)$ and a commutative diagram \eqref{v.4.1}.
We work Zariski locally on $Y$ and, 
due to \cref{prop:diviningcover is a logvcover}, 
dividing locally on $S$. 
Moreover, 
by \cite[Theorem 1.1]{MR1754621},
we may assume that $f$ is integral.
The fiber product $Z := X \times_S^{\log} \Spec{(V,P)}$ in the category of log schemes 
is an integral log scheme. 
The morphism $f' \colon Z \to \Spec{(V,P)}$ is subtrusive by assumption. 
\cref{lem:extendinglogvaluationring} shows
there is a morphism $\Spec(W,Q)\to Z$ such that the induced morphism of log valuation rings 
$(V,P)\to (W,Q)$ is an extension of log valuation rings. 
Thus, 
we obtain a log $v$-cover from the commutative diagram
$$
\begin{tikzcd}
\Spec{(W,Q)} \ar[r]  \ar[rr, bend left] \ar[rd]&
Z \ar[r] \ar[d] & X \ar[d, "f"] \\
&\Spec{(V,P)} \ar[r] & S
\end{tikzcd}
$$

For the other direction,
assume $f$ is a log $v$-cover.
The class of log $v$-covers is closed under pullbacks by \cref{prop:logvcoverbasechange},
so it suffices to show that $f$ is subtrusive.
Since $f$ is quasi-compact, 
we only need to show that every specialization of points $s' \rightsquigarrow s$ of $S$ 
can be lifted to $X$,
see \cite[Remark 2.5]{Rydh}.
By \cref{thm:logvaluationringsliftspecializations} there is a log valuation ring $(V,P)$ such that the 
morphism of log schemes $\Spec{(V,P)} \to S$ lifts $s' \rightsquigarrow s.$
Since $f$ is a log $v$-cover, 
there is an extension of log valuation rings $(V,P)\to (W,Q)$ fitting into \eqref{v.4.1}.
The morphism $\Spec(W,Q)\to X$ gives the required lifting of $x' \rightsquigarrow x$ in $X$ of 
$s' \rightsquigarrow s$ in $S$. 
\end{proof}

The \emph{Kummer \'etale topology} (resp.\ \emph{log \'etale topology}) 
on the category of qcqs fs log schemes is generated by families 
$\{U_i\to X\}_{i\in I}$ of Kummer log \'etale (resp.\ log \'etale) morphisms 
for $I$ finite, such that $\amalg_{i\in I} U_i\to X$ is surjective 
(resp.\ universally surjective).
The \emph{Kummer fppf-topology} (resp.\ \emph{log fppf-topology}) 
on the category of qcqs fs log schemes is generated by finite covering families of Kummer log flat 
(resp.\ log flat) finitely presented morphisms such that $\amalg_{i\in I} U_i\to X$ 
is surjective (resp.\ universally surjective).

We abbreviate these topologies by k\'et, l\'et, kfppf, and lfppf.

\begin{proposition}
\label{prop:log flat vs log v}
The log $h$-topology on the category of qcqs fs log schemes is finer than the log fppf-topology.
\end{proposition}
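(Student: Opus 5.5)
To show that the log $h$-topology is finer than the log fppf-topology, it suffices to show that every generating covering family of the log fppf-topology is a covering family for the log $h$-topology. So let $\{U_i\to X\}_{i\in I}$ be a finite family of log flat, finitely presented morphisms of qcqs fs log schemes such that $f\colon \amalg_{i\in I} U_i\to X$ is universally surjective. By \cref{definition:loghtop} we must check two things about $f$: that it is finitely presented, and that it is universally subtrusive.

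\textbf{Finite presentation.} Each $U_i\to X$ is finitely presented and $I$ is finite, so $f$ is finitely presented.

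\textbf{Log flatness of $f$.} Log flatness is local on the source, so $f$ is log flat.

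\textbf{Universal subtrusiveness.} Since $f$ is universally surjective, log flat and locally finitely presented, \cref{prop:logflatsubtrusive} shows directly that $f$ is universally subtrusive. Hence $\{U_i\to X\}$ is a log $h$-covering family, and the claim follows.

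\textbf{Which definition of universality is needed.} The real content sits in \cref{prop:logflatsubtrusive}, whose proof reduces to the exact case via a dividing cover, \cite[Theorem 1.1]{integrallogblowup}. In that case \cref{lem:logflatexact} gives universal openness, and \cref{prop:dividingcoversubtrusive} handles the dividing cover.

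One point must be checked. The universal surjectivity hypothesis in the log fppf-topology has to be understood in the category of fs log schemes. This matches the hypothesis of \cref{prop:logflatsubtrusive}, and it is why the definition of the lfppf-topology imposes universal surjectivity rather than mere surjectivity (compare \cref{example:surjectivebutnotuniversally}).

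For the Kummer fppf-topology, surjectivity is in fact automatically universal. Kummer morphisms are exact, so \cref{prop:universallysurjective} applies. Thus the same argument shows that the log $h$-topology is also finer than the Kummer fppf- and Kummer étale topologies.
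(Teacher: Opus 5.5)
Your proof is correct and is essentially the paper's argument: the paper also derives the claim from \cref{prop:logflatsubtrusive}. It additionally cites \cref{thm:mainsubtrusive}, but that result only serves to identify such covers with log $v$-covers; for the log $h$-statement, your direct check of finite presentation plus universal subtrusiveness suffices.
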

\begin{proof}
This is a consequence of \cref{prop:logflatsubtrusive} and \cref{thm:mainsubtrusive}.
\end{proof}

The log $cdh$-topology in \cite[Definition 2.1]{regGysin} is defined as follows.
The \emph{proper log cdh cd-structure} is the collection of cartesian squares of qcqs fs log schemes
\begin{equation}
\label{equation:proper log cdh distinguished square}
\begin{tikzcd}
W\ar[d]\ar[r]&
Y\ar[d,"f"]
\\
Z\ar[r,"i"]&
X
\end{tikzcd}
\end{equation}
where $i$ is a strict closed immersion, $f$ is a proper morphism, 
and the naturally induced morphism 
\begin{equation}\label{eq:logcdh-equation}
f^{-1}(X-Z)\longrightarrow X-Z
\end{equation}
is an isomorphism.
The \emph{log $cdh$-topology} is the coarsest topology containing strict Nisnevich coverings and 
$Z\amalg Y\to X$ as coverings for all 
squares of the form \eqref{equation:proper log cdh distinguished square}. 
We use lcdh as a shorthand for the log $cdh$-topology.

\begin{proposition}
\label{proposition:the log h-topology is finer than the log cdh-topology}
The log $h$-topology on the category of qcqs fs log schemes is finer than the log cdh-topology.
\end{proposition}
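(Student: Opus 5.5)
To prove that the log $h$-topology is finer than the log cdh-topology, I will check that each generating covering of the log cdh-topology is a covering in the log $h$-topology. By \cref{definition:loghtop}, this means showing that each generating family is finite and finitely presented, and that its disjoint union is universally subtrusive in the category of fs log schemes. There are two kinds of generators: strict Nisnevich coverings, and families $\{Z\to X, Y\to X\}$ coming from squares \eqref{equation:proper log cdh distinguished square}.

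\emph{Strict Nisnevich coverings.} A strict Nisnevich covering $\{U_i\to X\}$ of qcqs fs log schemes can be refined to a finite family, because $X$ is quasi-compact. Its underlying morphism $\amalg \ul{U_i}\to \ul{X}$ is étale and surjective, hence flat, finitely presented and universally open. By \cite[Remark 2.5]{Rydh} it is therefore universally subtrusive as a morphism of schemes. Strict morphisms are exact, so \cref{prop:exact} upgrades this to universal subtrusiveness in fs log schemes. Alternatively, one can use \cref{prop:logflatsubtrusive} together with the fact that strict surjective morphisms are universally surjective (\cref{prop:universallysurjective}).

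\emph{Proper cdh squares.} Set $g\colon Z\amalg Y\to X$. The underlying morphism $\ul{g}$ is proper. It is also surjective: $\ul{i}$ hits $Z$, and $f$ is an isomorphism over $X-Z$. The map $i$ is strict, and over the open set $X-Z$ the map $f$ is an isomorphism, hence strict there too. The catch is that $f$ need not be exact along $f^{-1}(Z)$, so \cref{prop:exact} does not apply to $g$ directly.

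\emph{Main obstacle: universal surjectivity of $g$.} I will instead check universal surjectivity of $g$ by hand. Let $S'\to X$ be a morphism of fs log schemes, and split $S'$ into $S'_Z$, the preimage of $Z$, and its open complement $S'-S'_Z$.
\begin{itemize}
\item Over $S'_Z$: since $i$ is a strict closed immersion, the fs fiber product $Z\times_X S'$ is the strict closed subscheme $S'_Z$. So $S'_Z$ is covered.
\item Over $S'-S'_Z$: this open set maps to $X-Z$, over which $Y\to X$ is an isomorphism. Hence $Y\times_X(S'-S'_Z)\cong S'-S'_Z$, and this part is covered too.
\end{itemize}
Thus the base change of $g$ is surjective. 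Fs fiber products preserve properness, as in the proof of \cref{prop:exact} via \cite[Corollary III.2.2.4]{Ogu} and \cref{v.1}, so every base change of $g$ is also proper. Every base change is therefore proper and surjective, hence subtrusive by \cite[Remark 2.5 (1a)]{Rydh}, and so $g$ is universally subtrusive.

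\emph{Finite presentation.} This is the point I am least sure about. A proper morphism $f$ in this setting may not be finitely presented when $X$ is not noetherian. I expect to handle this either by assuming finite presentation in the cd-structure, as \cite{regGysin} implicitly does, or by noetherian approximation. Failing that, I would pass through \cref{thm:mainsubtrusive} and compare with the log $v$-topology.
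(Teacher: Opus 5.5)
Your argument is correct, apart from the finite presentation issue you flag, which the paper shares. It takes a different route from the paper.

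The paper works Zariski locally on $X$ and uses \cite[Theorem 1.1]{integrallogblowup} to find a dividing cover $p\colon X'\to X$ after which $f'$ is integral, so that $(i',f')\colon Z'\amalg Y'\to X'$ is exact. It checks surjectivity of $(i',f')$ once, by the same splitting into $Z'$ and $X'-Z'$ that you use. Universality then comes from \cref{v.2}, with $p$ handled by \cref{prop:diviningcover is a logvcover}, so the paper only produces a log $h$-refinement of $Z\amalg Y\to X$.

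You instead observe that the defining conditions of a proper log cdh square are themselves stable under fs base change. A strict closed immersion pulls back to a strict closed immersion whose underlying scheme is the scheme-theoretic preimage. The isomorphism over $X-Z$ pulls back to an isomorphism over $S'-S'_Z$. So every base change of $Z\amalg Y\to X$ is again surjective, and with properness this shows directly that $Z\amalg Y\to X$ itself is universally subtrusive. You need no integral models, dividing covers or exactness.

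One small correction: properness of fs base changes needs no exactness at all. The map $\ul{Y\times_X S'}\to\ul{Y}\times_{\ul{X}}\ul{S'}$ is always finite, being a closed immersion (integralization) followed by the finite saturation map. Exactness is used in the proof of \cref{prop:exact} only to make this map surjective, which your hand check makes unnecessary.

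Your worry about finite presentation is justified, and the paper's proof passes over it too: it calls $(i',f')$ a log $h$-covering without checking that $i'$ and $f'$ are finitely presented. In fact, with the cd-structure exactly as written (no finite presentation on $i$ or $f$), the proposition fails for non-noetherian $X$.
\begin{itemize}
\item Take $X=\Spec A$ with $A=k[x_1,x_2,\ldots]/(x_1^2,x_2^2,\ldots)$ and trivial log structure, $Z=X_{\mathrm{red}}=\Spec k$ and $Y=\emptyset$. Since $X-Z=\emptyset$, this is a proper log cdh square, so $\{Z\to X\}$ is an lcdh-covering.
\item Log $h$-covering families are stable under base change and composition. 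So if the sieve generated by $Z\to X$ were lh-covering, it would contain a finite family $\{U_j\to X\}$ with finitely presented surjective coproduct, each $U_j$ factoring through $Z$.
\item A nonempty affine open $\Spec B\subset U_j$ would then be a finitely presented $A$-algebra killing all $x_i$. Choose $n$ so that its relations are defined over $A_n=k[x_1,\ldots,x_n]/(x_1^2,\ldots,x_n^2)$. Then $B\cong B_n\otimes_k k[x_{n+1},x_{n+2},\ldots]/(x_{n+1}^2,\ldots)$, and $x_{n+1}=0$ in $B$ forces $B_n=0$, hence $B=0$, a contradiction.
\end{itemize}

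So only your first remedy works: finite presentation of $i$ and $f$ must be built into the cd-structure, which is automatic when $X$ is noetherian. Under that hypothesis your proof is complete. Noetherian approximation cannot produce a finitely presented refinement, as the example shows. Going through \cref{thm:mainsubtrusive} would only prove that the log $v$-topology is finer than the log cdh-topology, which is a different statement.
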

\begin{proof}
The log $h$-topology is finer than the strict Nisnevich topology,
so it suffices to show that $(i,f)\colon Z\amalg Y\to X$ admits a refinement by a log $h$-covering 
for every square \eqref{equation:proper log cdh distinguished square}.
Working Zariski locally on $X$,
by \cite[Theorem 1.1]{integrallogblowup}
we may assume there exists a dividing cover $p\colon X'\to X$ such that its pullback 
$f'\colon Y':=Y\times_X X'\to X'$ is integral.
If $i'\colon Z':=Z\times_X X'\to X'$ is the pullback of $i$, 
then $f'^{-1}(X'-Z')\to X'-Z'$ 
is an isomorphism,
which implies $(i',f')\colon Z'\amalg Y'\to X'$ is surjective since 
\eqref{eq:logcdh-equation} is an isomorphism.
Finally,
$p$ and $(i',f')$ are log $h$-coverings by \cref{v.2} and \cref{prop:diviningcover is a logvcover}.
\end{proof}

To summarize, we have the following topologies on qcqs fs log schemes, 
where $\mu \to \tau$ indicates that $\tau$ is a \emph{finer} topology than $\mu;$ 
that is, every $\mu$-covering is a $\tau$-covering. 
\[
\begin{tikzcd}
&
&
&
&
\mathrm{scdh}\ar[d]\ar[r]&
\mathrm{lcdh}\ar[d]
\\
\mathrm{sZar}\ar[r]\ar[d]&
\mathrm{sNis}\ar[r]\ar[d]\ar[rrru,bend left=1em]&
\set\ar[d]\ar[rd]\ar[r]&
\mathrm{sfppf}\ar[rd]\ar[r]&
\mathrm{sh}\ar[r]&
\mathrm{lh}\ar[d]
\\
\mathrm{dZar}\ar[r]&
\mathrm{dNis}\ar[r]&
\mathrm{d\acute{e}t}\ar[rd]&
\ket\ar[d]\ar[r]&
\mathrm{kfppf}\ar[d]&
\mathrm{lv}
\\
&
&
&
\letale\ar[r]&
\mathrm{lfppf}\ar[ruu]
\end{tikzcd}
\]
\vspace{0.1in}

The strict topologies 
$\mathrm{sZar}$, $\mathrm{sNis}$, $\mathrm{sfppf}$, $\mathrm{scdh}$, and $\mathrm{sh}$ are generated by families of strict morphisms $\{U_i\to X\}_{i\in I}$ such that $\{\ul{U_i}\to \ul{X}\}_{i\in I}$ is the covering for the corresponding topology of schemes.
We refer to \cite[Definition 3.1.4]{logDM} for the dividing Zariski topology (dZar), dividing Nisnevich topology (dNis), and dividing \'etale topology (d\'et).
\vspace{0.1in}

We end this section with a logarithmic analog of \cite[Theorems 3.10, 3.12]{Rydh}.

\begin{theorem}[Structure of universally subtrusive morphisms]
\label{thm:structuretheorem}
Let $f \colon X \to S$ be a universally subtrusive morphism of finite type between noetherian fs log schemes.
Then there is a refinement $g \colon Y \to S$ of $f$, and a factorization $Y\to Z \to S$ of $g$
into one of the following cases:
\begin{enumerate}
 \item[(i)] $Y\to Z$ is strict faithfully flat finitely presented, and $Z\to S$ is proper surjective.
 \item[(ii)] $Y\to Z$ is a quasi-compact Zariski covering, and $Z\to S$ is proper surjective and 
 finitely presented. 
\end{enumerate}
\end{theorem}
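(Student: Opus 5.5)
The plan is to reduce to Rydh's structure theorems \cite[Theorems 3.10, 3.12]{Rydh} for the underlying schemes. The obstruction is that $\ul{f}$ need not be universally subtrusive: fs fiber products differ from scheme fiber products. We fix this with a dividing cover, as in the proofs of \cref{prop:logflatsubtrusive} and \cref{proposition:the log h-topology is finer than the log cdh-topology}.

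First, work Zariski locally on $S$; since $S$ is noetherian, hence quasi-compact, finitely many charts suffice and the local refinements can be reassembled. By \cite[Theorem 1.1]{integrallogblowup} (or \cite[Theorem 1.1]{MR1754621}), choose a dividing cover $p\colon S'\to S$, which may be taken to be a log blow-up, such that the pullback $f'\colon X':=X\times_S S'\to S'$ is integral. After a further dividing cover we want $f'$ to be saturated, or at least exact. Then $f'$ is universally subtrusive by base change, and \cref{prop:saturated} (or \cref{prop:exact}) shows that $\ul{f'}$ is universally subtrusive in the category of schemes. Since $p$ is proper, $S'$ is noetherian and $f'$ is of finite type.

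Next, apply \cite[Theorems 3.10, 3.12]{Rydh} to $\ul{f'}$. This gives a refinement $\ul{Y}\to\ul{S'}$ factoring as $\ul{Y}\to\ul{Z'}\to\ul{S'}$, where either $\ul{Y}\to\ul{Z'}$ is faithfully flat finitely presented and $\ul{Z'}\to\ul{S'}$ is proper surjective, or $\ul{Y}\to\ul{Z'}$ is a quasi-compact Zariski covering and $\ul{Z'}\to\ul{S'}$ is proper surjective and finitely presented. Equip $Z'$ and $Y$ with the log structures pulled back from $S'$, so that $Z'\to S'$ and $Y\to Z'$ are strict. Set $Z:=Z'$ with structure map $Z'\to S'\xrightarrow{p} S$. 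Then $Z\to S$ is proper surjective as a composite of such maps, and it is finitely presented in case (ii) because $S$ is noetherian and everything is of finite type.

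It remains to make $Y\to S$ a refinement of $f$, that is, to make it factor through $X$. The scheme map $\ul{Y}\to\ul{X'}$ does not by itself lift to the log setting. Here I would replace $Y$ by $Y\times_{S'}X'$, which is strict over $X'$. In the saturated case its underlying scheme is $\ul{Y}\times_{\ul{S'}}\ul{X'}$, and this receives the diagonal-type section induced by $\ul{Y}\to\ul{X'}$. Pulling back along that section produces a strict scheme-theoretic refinement that factors through $X'\to X$.

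The main obstacle is the saturation step. Integral morphisms need not be saturated, and $\ul{X\times_S S'}$ can differ from the scheme fiber product. If full saturation cannot be arranged, the fallback is exactness: \cref{prop:exact} still transfers universal subtrusiveness, and by \cite[Corollary III.2.2.4]{Ogu} the comparison maps to scheme fiber products are proper surjective. Those comparison maps can then be absorbed into the proper part $Z\to S$.
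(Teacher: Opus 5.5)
Up to the point where Rydh's theorems are invoked, your route is the paper's. You Zariski-localize, use \cite[Theorem 1.1]{integrallogblowup} to make $f$ integral after a dividing cover, and transfer universal subtrusiveness to $\ul{f}$ via \cref{prop:exact}. The paper then cites \cite[Theorems 3.10, 3.12]{Rydh} directly and does not carry out the lifting you attempt. Your ``saturation obstacle'' is not a real obstacle. Integral morphisms of fs log schemes are exact \cite[Theorem III.2.2.7]{Ogu}, and exactness is all \cref{prop:exact} needs. Saturation, by contrast, cannot in general be reached by dividing covers: for $\pt_{\N,k}\to\pt_{\N,k}$ induced by $2\colon\N\to\N$, every dividing cover of the target is an isomorphism.

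The genuine gap is your step lifting Rydh's scheme-theoretic refinement to a log one.
\begin{itemize}
\item Pulling $Y\times_{S'}X'$ back along the graph of $\ul{Y}\to\ul{X'}$ gives a log scheme with underlying scheme $\ul{Y}$, but its log structure is pulled back from $X'$, not from $S'$. Its map to $Z'$ is therefore not strict, so you lose ``strict faithfully flat'' in (i) and ``Zariski covering'' in (ii).
\item This already fails for saturated $f$. Take $f\colon(\A^1,0)\to\Spec k$ and the admissible scheme refinement $\ul{Y}=\A^1\to\ul{Z}=\Spec k$. Your construction returns $(\A^1,0)$, which is not strict over $\Spec k$. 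No log structure on $\Spec k$ can repair this: a log structure on $\A^1$ pulled back from $\Spec k$ only has sections mapping to $0$ or to units, so it cannot receive $x$ near the origin. The scheme refinement itself has to be changed, e.g.\ to $\A^1-\{0\}$.
\item In other cases the non-strict part of $Y\to X$ must instead be pushed into the proper map $Z\to S$. For the Kummer map $\pt_{\N,k}\to\pt_{\N,k}$ above, $Z\to S$ cannot be strict, and one may take $Z=X$. Your proposal has no mechanism for either operation.
\item Your exact-case fallback has the same defect. The proper surjective comparison map $\ul{Y\times_{S'}X'}\to\ul{Y}\times_{\ul{S'}}\ul{X'}$ sits above $Y$, i.e.\ before the flat or Zariski step. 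Absorbing it into $Z\to S$ would mean commuting a proper morphism past a flat one. That is precisely the nontrivial content of Rydh's structure theorems, not a formal manipulation.
\end{itemize}
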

\begin{proof}
Working Zariski locally, we may assume $f$ has a chart. 
Since $\ul{f}$ is finitely presented, 
\cite[Theorem 1.1]{integrallogblowup} yields a dividing cover $S' \to S$ such that 
$f' \colon X\times_S S' \to S'$ is integral, 
and hence exact \cite[Theorem III.2.2.7]{Ogu}. 
Thus, by \cref{prop:dividingcoversubtrusive}, 
we may assume $f$ is exact because dividing covers are universally subtrusive, 
proper surjective, and finitely presented.
\cref{v.2} implies $\ul{f}$ is a universally subtrusive morphism.
We conclude using the corresponding results for schemes; 
see \cite[Theorem 3.10]{Rydh} for (i) and \cite[Theorem 3.12]{Rydh} for (ii).
\end{proof}

\section{Examples of log \texorpdfstring{$v$}{v}-sheaves}
\label{section:Examples of log h-sheaves}
In this section, 
we provide examples of sheaves that satisfy descent for the log $v$-topology, 
and construct a stable $\infty$-category of log $h$-motives that allows for a 
homotopy-theoretic treatment of such sheaves.

Fix a base qcqs scheme $S$ and regard it as an fs log scheme with the trivial log structure. 
Let $\lSch/S$ denote the category of fs log schemes over $S$, and let $\lSch_{qcqs}/S$ denote the full subcategory of $\lSch/S$ consisting of qcqs fs log schemes over $S$.
If $t$ is a Grothendieck topology on $\lSch/S$, we write $(\lSch/S)_t$ for the site defined by $t$.
If $\cG$ is a sheaf of abelian groups on $S_t$ and $X \in \lSch_{qcqs}/S,$ 
we write $R_t\Gamma(X, \cG)$ for the right derived functor of $\Gamma(X_t, -)$ evaluated on $\cG.$ 
The assignment $X \mapsto R_t\Gamma(X, \cG)$ defines a $t$-sheaf on $\lSch_{qcqs}/S$
with values in the derived category of abelian groups.

Let $\cC$ be an $\infty$-category with small limits and $\cF \colon \lSch_{qcqs}^{\text{op}}/S\to \cC$ 
be a functor.
A morphism $f \colon Y \to X$ in $\lSch_{qcqs}/S$ satisfies $\cF$-\emph{descent} 
if the naturally induced morphism 
$$\cF(X) \to \lim \left( \cF(Y) \rightrightarrows \cF(Y \times_X Y) \rightrightrightarrows \cdots \right)$$
is an equivalence.
We say that $f$ satisfies \emph{universal} $\cF$-descent if all 
base changes of $f$ in $\lSch_{qcqs}/S$ satisfy $\cF$-descent.

\begin{definition}\label{def:logvsheaf}
Let $S$ be a qcqs fs log scheme and $\cC$ an $\infty$-category with small limits. 
A functor $\cF \colon \lSch_{qcqs}^{\text{op}}/S \to \cC$ 
satisfies \emph{log $v$-descent} if the following conditions hold:
\begin{enumerate}
\item[(i)] $\cF$ takes finite disjoint unions of fs log schemes to products.
\item[(ii)] Every log $v$-cover $f \colon Y \to X$ in $\lSch_{qcqs}/S$ satisfies universal $\cF$-descent.
\end{enumerate}
\end{definition}

Next we describe three basic facts for morphisms of universal $\cF$-descent taken from 
\cite[Lemma 3.4]{MR4278670} that we will use repeatedly.

\begin{lemma}\label{lem:descentproperties}
Let $\cF \colon \lSch_{qcqs}^{\text{op}}/S \to \cC$ be a functor, 
and $f \colon Y \to X$, $g \colon Z \to Y$ be morphisms in $\lSch_{qcqs}/S$.
\begin{enumerate}[(i), ref =\cref{lem:descentproperties}.(\roman*)]
\item\label{lem:descentproperties.(i)} If $f$ has a section $s \colon X \to Y$, then $f$ satisfies universal $\cF$-descent.
\item\label{lem:descentproperties.(ii)} If $f$ and $g$ satisfy universal $\cF$-descent, 
then the composite $f \circ g \colon Z \to X$ satisfies universal $\cF$-descent. 
\item\label{lem:descentproperties.(iii)} If the composite $f \circ g \colon Z \to X$ satisfies universal $\cF$-descent, then $f$ satisfies universal $\cF$-descent.
\end{enumerate}
\end{lemma}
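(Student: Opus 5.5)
These three facts are formal consequences of the definition of $\cF$-descent, and I will treat them as in \cite[Lemma 3.4]{MR4278670}. The approach is to recast descent along $f\colon Y\to X$ as a statement about the augmented Čech nerve $\check{C}(Y/X)_\bullet\to X$. The statement is that $\cF(X)\to\lim_{\Delta}\cF(\check{C}(Y/X)_\bullet)$ is an equivalence. Each property then follows by manipulating augmented simplicial objects, using only that $\lSch_{qcqs}/S$ has fiber products and that $\cC$ has small limits.

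For (i), a section $s$ of $f$ makes the augmented Čech nerve of $f$ split, with extra degeneracies built from $s$. A split augmented cosimplicial object is a limit diagram, since the augmentation is then a colimit-type retract in the sense of \cite[Lemma 6.1.3.16]{HTT}. Hence $f$ satisfies $\cF$-descent. Sections are preserved under base change: the pullback $f'\colon Y\times_X X'\to X'$ has the section $(s\circ p,\id)$. Therefore the descent is universal.

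For (ii) and (iii), I will use the bisimplicial object $\check{C}(Z/Y)$ over $\check{C}(Y/X)$. Concretely, I form the bi-Čech nerve with $(m,n)$-term
\[
Z^{\times_X (m+1)}\times_{X} \cdots ,
\]
more cleanly $W_{m,n}=\check{C}(Z\times_X \check{C}(Y/X)_n \,/\, \check{C}(Y/X)_n)_m$. For (iii), $g' \colon Z\times_X Y\to Y$ has a section induced by $g$, so by (i) and base change, descent along $f\circ g$ reduces to descent along $f$. The standard form of this argument is the following. Consider the bisimplicial object with terms $Z^{m+1}_X\times_X Y^{n+1}_X$. For fixed $n\ge 0$, the row augmented over $Y^{n+1}_X$ is the Čech nerve of a map with a section, hence a limit diagram. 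For fixed $m$, the column augmented over $Z^{m+1}_X$ is the Čech nerve of a base change of $f$. That map has a section from $g$, so it is again a limit diagram. Computing the limit over $\Delta\times\Delta$ in both orders gives $\lim\cF(Y^{\bullet+1})\simeq\lim\cF(Z^{\bullet+1})$. Combined with descent for $f\circ g$, this shows that $f$ satisfies descent. Base change preserves all the hypotheses, which gives universality.

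For (ii), I use the same bisimplicial object. Now the columns satisfy descent because they are base changes of $f\circ g$ composed appropriately. The exact requirement is descent along $Z\times_X Y^{n+1}\to Y^{n+1}$, which is a base change of $g$ after composing with a projection that has a section. The rows satisfy descent by universal descent for $f$, after base change to $Z^{m+1}$.

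The main obstacle is the bookkeeping in (ii). There I must check that $Z\times_X Y^{n+1}_X\to Y^{n+1}_X$ is a base change of $g$, up to a map with a section, so that (i) and the (iii)-type argument apply. I must also verify that $\lim_{\Delta\times\Delta}$ agrees with the diagonal limit, using cofinality of $\Delta\to\Delta\times\Delta$.
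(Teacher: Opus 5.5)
Your overall strategy is the standard one behind \cite[Lemma 3.4]{MR4278670}, which the paper simply cites: split \v{C}ech nerves for (i), and the bisimplicial object $Z^{m+1}_X\times_X Y^{n+1}_X$ with iterated limits for (ii) and (iii). Your part (i) is fine. However, you have the two projections of $Z\times_X Y$ mixed up, and this breaks (ii) and (iii) as written.

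It is $Z\times_X Y\to Z$ that has the section $(\id_Z,g)$. This is what makes the columns $n\mapsto Z^{m+1}_X\times_X Y^{n+1}_X$, augmented over $Z^{m+1}_X$, split. The other projection $\mathrm{pr}_2\colon Z\times_X Y\to Y$ has no section in general. So your claim in (iii) that the rows $m\mapsto Z^{m+1}_X\times_X Y^{n+1}_X$, augmented over $Y^{n+1}_X$, are \v{C}ech nerves of maps with sections is false. If both directions were split, you would get $\lim\cF(Y^{\bullet+1}_X)\simeq\lim\cF(Z^{\bullet+1}_X)$ for every $Z\to Y\to X$; with $Y=X$ and $Z=\emptyset$ this says $\cF(X)\simeq\cF(\emptyset)$.

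In (iii) the repair is easy. The row over $Y^{n+1}_X$ is the \v{C}ech nerve of the base change of $f\circ g$ along $Y^{n+1}_X\to X$. It is therefore a limit diagram by the \emph{universal} descent hypothesis on $f\circ g$. That is exactly where universality enters, and then your comparison of the two iterated limits goes through.

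In (ii) these same rows are the whole difficulty, and your justification is circular. Descent along $Z\times_X Y^{n+1}_X\to Y^{n+1}_X$ is descent for a base change of $f\circ g$, i.e.\ an instance of what you are proving. Rewriting this map as a base change of $g$ followed by a projection with a section only helps if you already know that composites preserve universal descent. The missing step is to apply (iii), proved first and independently of (ii), to the factorization
\[
g\colon Z\xrightarrow{(\id_Z,\,g)}Z\times_X Y\xrightarrow{\mathrm{pr}_2}Y .
\]
Since $g$ satisfies universal $\cF$-descent, so does $\mathrm{pr}_2$. Hence so does its base change $Z\times_X Y^{n+1}_X\cong(Z\times_X Y)\times_Y Y^{n+1}_X\to Y^{n+1}_X$. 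Now the rows are limit diagrams and the columns are split, so
\[
\cF(X)\simeq\lim_n\cF(Y^{n+1}_X)\simeq\lim_{\Delta\times\Delta}\cF(Z^{\bullet+1}_X\times_X Y^{\bullet+1}_X)\simeq\lim_m\cF(Z^{m+1}_X)
\]
compatibly with the augmentations. Stability of the hypotheses under base change then gives universality. With the logical order (i), (iii), (ii) your argument is complete. No cofinality of the diagonal $\Delta\to\Delta\times\Delta$ is needed, since you only compare iterated limits.
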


\subsection{Log étale cohomology}
\label{subsection:Kummer étale cohomology with torsion coefficients}

Our aim is to show that log étale cohomology with torsion coefficients satisfies decent for 
log $v$-covers. 
This implies representability of log \'etale cohomology groups 
in the $\infty$-category of $S^1$-stable logarithmic 
$h$-motives $\ul{\logSH}_{\lh}^\eff(S),$ see \cref{subsection:Logarithmic H-Motives}.

We begin with a few lemmas needed in the proof of \cref{thm:etalecohomology}.

\begin{lemma}
\label{lem:Kummer}
Let $P\to Q$ be a Kummer homomorphism of saturated monoids,
and consider its \v{C}ech nerve $Q^\bullet$ in the category of saturated monoids.
Then there is an isomorphism
\[
Q^n
\cong
Q\oplus (Q^\gp/P^\gp)^{\oplus n}
\]
for every integer $n\geq 0$.
In particular,
there is an isomorphism $\ol{Q^n}\cong \ol{Q}$.
\end{lemma}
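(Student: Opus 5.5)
We argue by induction on $n$, and the key case is $n=1$: we show that the saturated pushout $Q\oplus_P^{\sat} Q$ is isomorphic to $Q\oplus (Q^\gp/P^\gp)$. Since $P\to Q$ is Kummer, $Q^\gp/P^\gp$ is a torsion group. The pushout in integral monoids is the image of $Q\oplus Q$ in $(Q^\gp\oplus Q^\gp)/P^\gp$, where $P^\gp$ is embedded antidiagonally. Its group completion is $(Q^\gp\oplus Q^\gp)/P^\gp$, and the map $(a,b)\mapsto (a+b,\, \bar b)$ identifies this group with $Q^\gp\oplus Q^\gp/P^\gp$. Here $a+b$ is the codiagonal map, and $\bar b$ is the class of $b$ modulo $P^\gp$. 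This map is well defined because $(p,-p)\mapsto (0,0)$. It is bijective because the inverse is $(c,\bar b)\mapsto (c-b,b)$, and this inverse is independent of the lift $b$ up to the antidiagonal.

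Next we identify the saturation of the image of $Q\oplus Q$ inside $Q^\gp\oplus (Q^\gp/P^\gp)$. The image consists of the pairs $(a+b,\bar b)$ with $a,b\in Q$. It is contained in $Q\oplus (Q^\gp/P^\gp)$, and we claim its saturation is all of $Q\oplus(Q^\gp/P^\gp)$. First, the torsion group $Q^\gp/P^\gp$ lies in the saturation. Given $\bar b$ with $b\in Q^\gp$, write $b=q_1-q_2$ with $q_i\in Q$. Choose $m$ with $mq_2\in P$; this is possible by the Kummer property, which makes $Q^\gp/P^\gp$ torsion and gives multiples of elements of $Q$ lying in $P$. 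Then $(0,\bar b)$ differs from elements of the image by elements of $P$, and a multiple of it is $0$, so it lies in the saturation because the saturation inside a group contains all torsion. Second, every $(c,0)$ with $c\in Q$ is the image of $(c,0)$. Together these give the whole of $Q\oplus(Q^\gp/P^\gp)$. Conversely, $Q\oplus (Q^\gp/P^\gp)$ is already saturated, because $Q$ is saturated and the second factor is a group. This proves the case $n=1$.

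For general $n$ we run an induction. The term $Q^{n}$ of the Čech nerve is the $(n+1)$-fold saturated pushout of $Q$ over $P$; we use the indexing convention under which $Q^0=Q$. Then $Q^{n}=Q^{n-1}\oplus^{\sat}_P Q$. We apply the same computation to the homomorphism $P\to Q^{n-1}\cong Q\oplus G^{n-1}$, where $G=Q^\gp/P^\gp$, together with the Kummer map $P\to Q$. A cleaner route is to compute directly: the group completion of the $(n+1)$-fold pushout is $(Q^\gp)^{n+1}/(P^\gp)^{n}$, and the map $(a_0,\dots,a_n)\mapsto(\sum a_i,\bar a_1,\dots,\bar a_n)$ identifies it with $Q^\gp\oplus G^n$. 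The same saturation argument as in the case $n=1$ then applies coordinatewise.

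For the final statement, the units of $Q\oplus G^n$ are $Q^*\oplus G^n$. Therefore $\ol{Q^n}=(Q\oplus G^n)/(Q^*\oplus G^n)\cong \ol{Q}$.

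The main obstacle is the saturation step, and in particular the claim that the torsion classes $(0,\bar b)$ lie in the saturation. We have to check that the saturation is taken inside the group completion, in which torsion elements are automatically saturated, and that the Kummer condition guarantees that $G$ is torsion. The remaining parts are routine bookkeeping of the isomorphism.
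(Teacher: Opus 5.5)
Your proof is correct, and it is more self-contained than the paper's. The paper quotes the case $n=1$ from \cite[Lemma 3.3]{MR1922832} and says only that an induction handles $n\geq 2$. You instead treat all $n$ at once. You identify the group completion of the $(n+1)$-fold pushout with $Q^\gp\oplus G^n$, where $G=Q^\gp/P^\gp$, via $(a_0,\dots,a_n)\mapsto(\sum a_i,\bar a_1,\dots,\bar a_n)$. You then note that the saturation of the image of $Q^{\oplus(n+1)}$ is exactly $Q\oplus G^n$: it contains $Q\oplus 0$ and the torsion group $0\oplus G^n$, and it lies inside the saturated submonoid $Q\oplus G^n$.

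This direct route makes the induction superfluous. The inductive variant you sketch first would need a small adjustment, namely a map $Q\to Q^{n-1}$ under $P$ in place of the codiagonal. You can also delete the sentence about $(0,\bar b)$ ``differing from elements of the image by elements of $P$'': the fact that $G$ is torsion already puts $(0,\bar b)$ in the saturation.

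What your explicit coordinates give beyond the paper's statement is naturality. Every coface and codegeneracy map of the \v{C}ech nerve preserves the coordinate $\sum a_i$, so the cosimplicial monoid $\ol{Q^\bullet}$ is constant with value $\ol{Q}$. This is exactly what the proof of \cref{lem:AP} uses when it identifies the reduced \v{C}ech nerve with a constant diagram. A bare abstract isomorphism $\ol{Q^n}\cong\ol{Q}$ does not give it.
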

\begin{proof}
The claim is evident when $n=0$.
The case $n=1$ is \cite[Lemma 3.3]{MR1922832}.
An induction argument establishes the claim for $n\geq 2$.
\end{proof}

\begin{lemma}\label{lem:toric2}
Let $k$ be a separably closed field and $\theta\colon P\to Q$ be a homomorphism of 
sharp fs monoids such that $\theta^\gp \colon P^{\gp} \to Q^{\gp}$ is an isomorphism.
Then for $n>1$ there is a natural quasi-isomorphism
$$R\Gamma_{\ket}(\pt_{Q,k}, \Z/n)\simeq R\Gamma_{\ket}(\pt_{P,k}, \Z/n)$$
\end{lemma}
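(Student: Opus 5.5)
The plan is to compute both sides explicitly through the Kummer étale fundamental group of a log point over a separably closed field. Over $k$ separably closed, the Kummer étale site of $\pt_{P,k}$ is equivalent to the category of finite sets with continuous action of the profinite group
\[
\pi_1^{\ket}(\pt_{P,k})\cong \Hom(P^\gp,\hat{\Z}'(1)),
\]
where $\hat{\Z}'(1)=\lim_{(m,p)=1}\mu_m(k)$ and $p$ is the characteristic exponent. This is the standard description of Kummer covers of a log point as $\pt_{P,k}\times_{\A_P}\A_{P'}$ for Kummer maps $P\to P'$ of exponent prime to $p$, with Galois group $\Hom(P'^\gp/P^\gp,\mu)$, together with the triviality of the strict étale site of $\Spec k$. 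Consequently $R\Gamma_{\ket}(\pt_{P,k},\Z/n)$ is the continuous group cohomology $R\Gamma(\Hom(P^\gp,\hat\Z'(1)),\Z/n)$. This depends only on $P^\gp$, and it depends on it functorially.

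The steps are as follows.

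First, I will establish this identification of sites. One can verify the descent directly using \cref{lem:Kummer}: for a Kummer cover $\pt_{P',k}\to\pt_{P,k}$ with $P'$ saturated and Kummer over $P$, the Čech nerve is $\pt_{P'\oplus (P'^\gp/P^\gp)^{\oplus m},k}$. By \cref{lem:Kummer} its underlying log point is $\pt_{P',k}$ with the extra finite group $P'^\gp/P^\gp$ split off. Over $k$ separably closed with $n$ invertible, the term with $P'^\gp/P^\gp$ gives a disjoint union of copies indexed by $\Hom(P'^\gp/P^\gp,\mu)$. So the Čech complex is the group cochain complex of this finite group. Passing to the colimit over all Kummer covers of exponent prime to $p$ gives the group cohomology description.

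Second, I will check naturality. The map $\theta$ induces $\pt_{Q,k}\to\pt_{P,k}$. On fundamental groups this is $\Hom(\theta^\gp,\hat\Z'(1))$, which is an isomorphism because $\theta^\gp$ is. Moreover, Kummer extensions of $P$ and of $Q$ correspond cofinally: for $P\to P'$ Kummer, the pushout $Q\oplus_P P'$ saturated is Kummer over $Q$ with the same group quotient $P'^\gp/P^\gp\cong (Q\oplus_P P')^{\gp}/Q^\gp$. Hence the pullback map on cohomology is a quasi-isomorphism.

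If $n$ is divisible by $p=\mathrm{char}\,k$, I split $\Z/n$ into its prime-to-$p$ part and its $p$-part. For the $p$-part, Kummer covers of $p$-power degree are not étale. Only $\ul{\pt_{P,k}}=\Spec k$ contributes, via the strict étale site, so both sides reduce to $\Z/p^r$ in degree $0$.

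The main obstacle is the first step: rigorously justifying that the Čech computation over the cofinal system of standard Kummer covers computes $R\Gamma_{\ket}$. This amounts to knowing that every Kummer étale cover of $\pt_{P,k}$ is refined by a standard one. I would invoke the known structure of Kummer étale covers (Kato and Illusie), namely that they are, strict étale locally, of the form $X\times_{\A_P}\A_{P'}$. Here strict étale covers of $\Spec k$ split since $k$ is separably closed.
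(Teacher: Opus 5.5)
Your proposal is correct and is essentially the paper's argument: the paper simply cites \cite[Proposition 4.6]{MR1457738}, which identifies Kummer \'etale sheaves on $\pt_{P,k}$ with modules over the profinite group $I(\pt_{P,k})\cong\Hom(P^\gp,\hat{\Z}'(1))$, a group depending only on $P^\gp$ and $k$. Your \v{C}ech computation and cofinality check are an unpacking of that citation. One correction if you write it out: the reduced map $\pt_{P',k}\to\pt_{P,k}$ is not itself log \'etale. The actual Kummer \'etale covers are the non-reduced fs fiber products $\pt_{P,k}\times_{\A_P}\A_{P'}$, with underlying ring $k[P']/(P^+)$, and $\pt_{P',k}$ is only their reduction. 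So either run the computation with those covers, or pass to reductions by topological invariance as in \cref{lem:reduced}.
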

\begin{proof}
The profinite group $I(\pt_{P,k})$ and hence the category 
$\text{$I(\pt_{P,k})$-$\Z/n$-Mod}/_{\ul{\pt_{P,k}}}$ in \cite[Proposition 4.6]{MR1457738} 
depends only on $P^\gp$ and $k$.
Consequently,
the equivalence 
\[
\Sh_\ket(\pt_{Q,k},\Z/n)
\simeq
\Sh_\ket(\pt_{P,k},\Z/n)
\]
implies our claim.
\end{proof}

\begin{lemma}
\label{lem:cohomology of log point}
For an fs log scheme $X$ and $n>1$ there is a natural quasi-isomorphism
\[
R\Gamma_\ket(X\times (\P^1,0+\infty),\Z/n)
\simeq
R\Gamma_\ket(X\times \pt_\N,\Z/n)
\]
\end{lemma}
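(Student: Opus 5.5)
The plan is to compare both sides through the zero section. Write $\mathbb{G}_m\subset \P^1$ for the open complement of $\{0,\infty\}$. The log scheme $(\P^1,0+\infty)$ is the toric variety $\T_\Sigma$ of the complete fan $\Sigma$ in $\Z$. It is covered by the two strict open charts $\A_\N$ (around $0$) and $\A_{-\N}$ (around $\infty$), which meet in $\mathbb{G}_m$ with trivial log structure. The point $\pt_\N$ is the strict closed fiber of $\A_\N\to \A^1$ over the origin.

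The first step is to show that restriction along the strict closed immersion $X\times\pt_\N\to X\times\A_\N$ induces a quasi-isomorphism
\[
R\Gamma_\ket(X\times \A_\N,\Z/n)\simeq R\Gamma_\ket(X\times\pt_\N,\Z/n).
\]
I would deduce this from a log proper/homotopy invariance of Kummer étale cohomology with torsion coefficients. Concretely, use the $\G_m$-action on $\A_\N$ contracting to the origin, or equivalently the $\A^1$-invariance of $\Z/n$-cohomology for log schemes with the log structure pulled back from $\A_\N$, together with the description of Kummer étale sheaves on log points from \cite[Proposition 4.6]{MR1457738}. The stalks of $R\alpha_*\Z/n$ for $\alpha\colon X\times\A_\N\to X\times\A^1$ are computed at geometric points by the cohomology of the inertia group $\hat\Z'(1)$. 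This gives $\Z/n$ in degree $0$ and $\Z/n(-1)$ in degree $1$, supported along $X\times\{0\}$. Hence the comparison reduces to the proper base change and homotopy invariance for étale cohomology of $X\times\A^1\to X$ with coefficients constant or supported on the zero section. This is where the hypothesis $n>1$ enters: we need $n$ invertible on the relevant points, or else we work with the tame part.

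The second step is a Mayer--Vietoris argument for the Zariski cover $\{\A_\N,\A_{-\N}\}$ of $(\P^1,0+\infty)$, combined with the analogous description of the other chart. By the first step, the $\A_{-\N}$ chart contributes $R\Gamma_\ket(X\times\pt_\N)$ as well. The Kummer étale cohomology of the intersection $X\times\G_m$ is then computed, via the localization triangles, as the same data, namely the fiber of a difference map. An alternative I would try first, since it avoids double counting, is to use the projection $X\times(\P^1,0+\infty)\to X\times\pt_\N$? No such map exists. Instead, I would directly compute $R\alpha_*$ for $\alpha\colon X\times(\P^1,0+\infty)\to X\times\P^1$ and apply the projective bundle and localization sequences. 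Both sides should then equal $R\Gamma(X,\Z/n)\oplus R\Gamma(X,\Z/n(-1))[-1]$ in the Kummer-étale sense. I would make the identification natural by exhibiting a map, for instance restriction to $X\times\pt_\N$ embedded over $0$, and checking on stalks.

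The main obstacle is making the comparison an honest natural map and not merely an abstract isomorphism of cohomology. I expect the cleanest route is to check that restriction to the fiber over $0$ is a quasi-isomorphism. This can be done étale locally on $X$ by reducing, via \cref{lem:toric2}-type stalk computations at log geometric points of $X$, to $X=\pt_{P,k}$ with $k$ separably closed. There both sides can be computed explicitly using the inertia groups together with $H^*(\P^1_k)$ and $H^*(\G_m)$.
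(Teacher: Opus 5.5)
Your Step 1 fails as soon as $n$ is not invertible on $X$, and the lemma makes no such assumption. The paper's own proof reduces to $X=\Spec\Z$, where no $n>1$ is invertible.

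Take $X=\Spec k$ with $k$ separably closed of characteristic $p$, and $n=p$. The Artin--Schreier sequence is exact on the Kummer \'etale site, and Kummer \'etale cohomology of $\cO$ agrees with its Zariski cohomology (as in the proof of \cref{lem:no log point}). Hence $H^1_\ket(\A_{\N,k},\Z/p)\cong k[t]/\{g^p-g : g\in k[t]\}$, which is nonzero because the class of $t$ survives. On the other hand, $H^1_\ket(\pt_{\N,k},\Z/p)\cong k/\{a^p-a : a\in k\}=0$.

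The failing ingredient is the one you invoke: $\A^1$-invariance of \'etale cohomology, which fails for $p$-torsion in characteristic $p$. The log structure at $0$ cannot repair this, because the Kummer \'etale inertia is only $\hat\Z'(1)$, which has no higher $\Z/p$-cohomology. Your remark ``this is where $n>1$ enters: we need $n$ invertible, or else we work with the tame part'' conflates $n>1$ with invertibility. There is also no tame part of $H^1_\ket(\A_{\N,k},\Z/p)$ to pass to. So the two-chart Mayer--Vietoris argument and the formula $R\Gamma(X,\Z/n)\oplus R\Gamma(X,\Z/n(-1))[-1]$ only cover the case where $n$ is invertible on $X$. That case suffices for the application in \cref{lem:cohomology}, but not for the lemma as stated.

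The missing idea is to exploit properness of $\P^1$ at $\infty$; this is what makes the statement true for every $n$. In the example above both sides equal $\Z/p$, because $R\Gamma(\P^1_k,\cO)\simeq k$. The paper does this uniformly:
\begin{enumerate}
\item It reduces to $X=\Spec\Z$ by Nakayama's proper base change.
\item It compares the localization triangles for the strict closed immersions $i'\colon\pt_\N\to(\P^1,0+\infty)$ and $i\colon\pt\to\square$ at $0$, which have the common open complement $(\P^1-\{0\},\infty)$.
\item Since $\square$-invariance holds for all torsion coefficients, $R\Gamma_\ket(\square,j_!j^*\Z/n)\simeq 0$.
\item The identification $p_*j'_!\simeq j_!$ for $p\colon(\P^1,0+\infty)\to\square$ transports this to $R\Gamma_\ket((\P^1,0+\infty),j'_!j'^*\Z/n)\simeq 0$, so restriction along $i'$ is a quasi-isomorphism.
\end{enumerate}
If you want to keep your pointwise approach, you must treat the $p$-primary part at each geometric point of characteristic $p$ separately, via Artin--Schreier on $\P^1$ itself rather than on the chart $\A_\N$. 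Your alternative route through $R\alpha_*$ for $\alpha\colon X\times(\P^1,0+\infty)\to X\times\P^1$ could be completed this way, but as written it is not carried out.
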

\begin{proof}
Let $f\colon X\times (\P^1,0+\infty)\to X$ and $g\colon X\times \pt_\N \to X$ be the projections.
It suffices to show that the induced morphism of complexes $f_*f^*\Z/n\to g_*g^*\Z/n$ is a quasi-isomorphism.
Due to the proper base change theorem \cite[Theorem 5.1]{MR1457738},
we reduce to the case $X=\pt:=\Spec{\Z}$.

Consider the strict closed immersions 
$i' \colon \pt_{\N} \to (\P^1, 0+\infty)$ and $i \colon \pt \to \square$ at the point $0 \in \P^1$, 
together with their open complements $j' \colon (\P^1-\{0\}, \infty) \to (\P^1, 0+\infty)$ and $j \colon  (\P^1-\{0\}, \infty) \to \square.$ 
Here, the projective line $\P^1$ is over the base $\Spec{\Z}$.
Let $p \colon (\P^1, 0+\infty) \to \square$ be the morphism that forgets the log structure at $0 \in \P^1.$ Then these morphisms fit together into the commutative diagram
\begin{equation}
\label{eq:tikzlogschemes}
\begin{tikzcd}
\pt_\N \ar[r, "i'"] \ar[d, "q"'] & (\P^1,0+\infty) \ar[d, "p"] & \ar[l, "j'" above] (\P^1-\{0\},\infty) \ar[d,"\id"]
\\
\pt \ar[r, "i"] & \square & \ar[l, "j" above] (\P^1-\{0\},\infty)
\end{tikzcd}
\end{equation}
By specializing the localization sequence 
\cite[§2.8(4)]{MR1457738} to the top row of \eqref{eq:tikzlogschemes} 
we obtain
\begin{equation}\label{eq:localiationsequence}
j'_!j'^* \to \id \to i'_*i'^*
\end{equation}
and the fiber sequence of complexes
\begin{equation}
\label{eq:toprowlocalizationsequence}
R\Gamma_{\ket}((\P^1,0+\infty),j'_!j'^*\Z/n)
\to
R\Gamma_{\ket}((\P^1,0+\infty),\Z/n)
\to
R\Gamma_{\ket}((\P^1,0+\infty),i'_*i'^*\Z/n)
\end{equation}
Similarly, the localization sequence 
\begin{equation}
j_!j^* \to \id \to i_*i^*
\end{equation}
gives the fiber sequence of complexes
\begin{equation}\label{eq:bottomrowlocalizationsequence}
R\Gamma_{\ket}(\square,j_!j^*\Z/n)
\to
R\Gamma_{\ket}(\square,\Z/n)
\to
R\Gamma_{\ket}(\square,i_*i^*\Z/n)
\end{equation}
We may identify $R\Gamma_{\ket}(\square,i_*i^*\Z/n)$ with $R\Gamma_{\ket}(\pt,\Z/n),$ and 
since Kummer étale cohomology is $\square$-invariant by \cite[Theorem 9.1.5]{logSH}, we also have that
$$R\Gamma_{\ket}(\square,\Z/n) \simeq R\Gamma_{\ket}(\pt,\Z/n)$$
By \eqref{eq:bottomrowlocalizationsequence} this implies the quasi-isomorphism
\begin{equation}\label{eq:localizationzero}
R\Gamma_{\ket}(\square,j_!j^*\Z/n) \simeq 0
\end{equation}
Moreover, since $p_*j'_! \simeq j_!$ by \cite[§5.4]{MR1457738}, we obtain 
$$R\Gamma_{\ket}((\P^1,0+\infty),j'_!j'^*\Z/n)\simeq 
R\Gamma_{\ket}(\square,j_!j^*\Z/n)$$
which by \eqref{eq:localizationzero} shows that
$$R\Gamma_{\ket}((\P^1,0+\infty),j'_!j'^*\Z/n) \simeq 0$$
Using \eqref{eq:toprowlocalizationsequence} we deduce the quasi-isomorphism
\begin{equation}\label{eq:projectiveequalspointN}
R\Gamma_{\ket}((\P^1,0+\infty),\Z/n) \simeq R\Gamma_{\ket}((\P^1,0+\infty),i'_*i'^*\Z/n)
\end{equation}
However, $R\Gamma_{\ket}((\P^1,0+\infty),i'_*i'^*\Z/n)$ can be identified with $R\Gamma_{\ket}(\pt_\N,\Z/n)$, which by \eqref{eq:projectiveequalspointN} implies the claim.
\end{proof}

\begin{lemma}
\label{lem:cohomology}
Let $k$ be a separably closed field of characteristic $p$ and $P$ be a sharp fs monoid.
If $n>1$ is coprime to $p$,
there are natural quasi-isomorphisms
\[
R\Gamma_\ket(\pt_{P,k},\Z/n)
\simeq
R\Gamma_\ket(\A_{P,k},\Z/n)
\simeq
R\Gamma_\ket(\A_{P^\gp,k},\Z/n)
\]
\end{lemma}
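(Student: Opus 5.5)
The plan is to construct the two comparison maps geometrically and then show they are quasi-isomorphisms by computing all three sides as exterior algebras over $\Z/n$ on the same degree-one group. The maps are restriction along the strict closed immersion $i\colon \pt_{P,k}\to \A_{P,k}$ at the origin $O$, and restriction along the open immersion $j\colon \A_{P^\gp,k}\to \A_{P,k}$ of the torus. The latter is an open immersion because $P\to P^\gp$ is a localization. Both maps are natural in $P$, so the result is a natural zig-zag
\[
R\Gamma_\ket(\pt_{P,k},\Z/n)\xleftarrow{i^*}R\Gamma_\ket(\A_{P,k},\Z/n)\xrightarrow{j^*}R\Gamma_\ket(\A_{P^\gp,k},\Z/n).
\]
Since $\A_{P^\gp,k}$ carries the trivial log structure, the right-hand term is the ordinary étale cohomology of the split torus $T=\Spec{k[P^\gp]}$.

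\textbf{Step 1: the map $j^*$.} I would invoke Kummer étale log purity (Fujiwara--Kato, as formulated by Illusie and Nakayama). For $n$ invertible on the log regular scheme $\A_{P,k}$, it gives $Rj_{\ket *}\Z/n\simeq \Z/n$ on $(\A_{P,k})_\ket$. Taking global sections then yields $R\Gamma_\ket(\A_{P,k},\Z/n)\simeq R\Gamma_\et(T,\Z/n)$. Here the hypotheses that $k$ is separably closed and $(n,p)=1$ are exactly what is needed.

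\textbf{Step 2: the map $i^*$.} By Nakayama's description \cite[Proposition 4.6]{MR1457738} (already used in \cref{lem:toric2}), Kummer étale sheaves on $\pt_{P,k}$ are modules over the profinite group $I=\Hom(P^\gp,\hat{\Z}'(1))$, where $\hat{\Z}'$ denotes the prime-to-$p$ completion of $\Z$. Hence $R\Gamma_\ket(\pt_{P,k},\Z/n)$ computes $H^*(I,\Z/n)$. Because $I$ is a free pro-prime-to-$p$ abelian group of rank $r=\mathrm{rk}\,P^\gp$, this is $\Lambda^*_{\Z/n}\bigl(\Hom(P^\gp,\Z/n)\bigr)(-1)$ up to twists. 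On the other hand, $H^*_\et(T,\Z/n)\simeq\Lambda^*\bigl(P^\gp\otimes\mu_n^{\vee}\bigr)$ by the Künneth formula for $\G_m^r$. In both cases $H^1$ is generated by the Kummer torsors attached to elements $a\in P^\gp$, namely the classes of $t^{a/n}$. On $\A_{P,k}$ these torsors are defined globally as Kummer log étale covers $\A_{P^{1/n},k}\to\A_{P,k}$, since $P\to P^{1/n}$ is Kummer. Restricting them to $O$ and to $T$ gives the standard generators on either side. The maps $i^*$ and $j^*$ are ring maps, and $H^*_\ket(\A_{P,k},\Z/n)\cong H^*_\et(T,\Z/n)$ by Step 1. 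It therefore suffices to check that $i^*$ is an isomorphism on $H^1$, and this follows by comparing the Kummer classes.

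\textbf{Main obstacle.} I expect the difficulty to lie in Step 1: the purity input, and making sure the identification holds on the Kummer étale site without assuming $P$ free. If a clean citation is unavailable, I would instead reduce to the free case. First, \cref{lem:toric2} allows replacing $P$ by any sharp fs monoid with the same $P^\gp$ on the point side. Second, a dividing cover of $\A_{P,k}$ by smooth toric charts $\A_{\N^r}$ reduces the computation for $\A_{P,k}$ itself; dividing covers are isomorphisms over the torus. For $\A_{\N^r,k}=(\A^1,0)^r$, the Künneth formula together with the rank-one computation (the same localization argument as in \cref{lem:cohomology of log point}, using $\square$-invariance) gives both equivalences directly.
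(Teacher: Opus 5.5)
Your proposal is correct. Step 1 coincides with the paper, which also handles the right-hand maps by Kummer étale purity \cite[Theorem 7.4]{MR1922832}. Purity applies to every log regular $\A_{P,k}$, so your worry about $P$ not being free is unfounded.

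The two proofs differ in how they treat $i^*$. The paper never computes a cohomology group. Instead it:
\begin{enumerate}
\item chooses a smooth maximal cone in a toric resolution of the cone of $P$, which gives $Q\cong\N^d$ with $Q^\gp\cong P^\gp$;
\item uses \cref{lem:toric2} to replace $\pt_{P,k}$ by $\pt_{Q,k}$, and applies purity on both rows;
\item for $Q=\N^d$, identifies $R\Gamma_\ket(\pt_{Q,k},\Z/n)$ with $R\Gamma_\ket(\Spec(k)\times(\P^1,0+\infty)^d,\Z/n)$ via the localization and $\square$-invariance argument of \cref{lem:cohomology of log point}, and then applies purity once more.
\end{enumerate}
This is essentially your fallback.

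Your main route instead computes both sides explicitly as exterior algebras on a free $\Z/n$-module of rank $r$. On the point side this is the group cohomology of $I\cong\hat{\Z}'(1)^r$ via Nakayama's description; on the $\A_{P,k}$ side it is purity plus Künneth for the torus. You then use that $i^*$ is a ring map, together with compatibility of the log Kummer classes of $a\in P^\gp$, to reduce to an isomorphism on $H^1$.

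What your route buys is a direct proof that the specific restriction map $i^*$ is a quasi-isomorphism, uniformly in $P$, with no toric resolution, no change of monoid, and no $\square$-invariance. The paper's chain of identifications is looser about this; for instance, the natural map is $P\to Q$ rather than $Q\to P$, since the smooth cone sits inside the cone of $P$. The cost is the explicit group and torus cohomology computations, Künneth with $\Z/n$ coefficients (harmless here because everything is free over $\Z/n$), and the Kummer-class bookkeeping.

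One small slip: $H^1(I,\Z/n)\cong P^\gp\otimes\Z/n(-1)$, not $\Hom(P^\gp,\Z/n)(-1)$. This does not affect the argument, since you compare the two sides through Kummer classes.
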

\begin{proof}
We consider $P$ as a cone $\sigma$ in the lattice $P^\gp$.
Let $\Sigma$ be a fan with the single maximal cone $\sigma$.
By toric resolution of singularities \cite[Theorem 11.1.9]{toric},
there exists a subdivision of fans $\Delta\to \Sigma$ such that $\Delta$ is smooth.
Choose any maximal cone $\delta$ of $\Delta$,
and regard $\delta$ as an fs monoid $Q$,
which is isomorphic to $\N^d$ for some integer $d\geq 0$..
The inclusion $Q\to P$ induces an isomorphism $Q^\gp\xrightarrow{\cong}P^\gp$.
With these definitions there is a naturally induced commutative diagram
\[
\begin{tikzcd}
R\Gamma_\ket(\pt_{Q,k},\Z/n)\ar[d]\ar[r,leftarrow]&
R\Gamma_\ket(\A_{Q,k},\Z/n)\ar[d]\ar[r]&
R\Gamma_\ket(\A_{Q^\gp,k},\Z/n)\ar[d]
\\
R\Gamma_\ket(\pt_{P,k},\Z/n)\ar[r,leftarrow]&
R\Gamma_\ket(\A_{P,k},\Z/n)\ar[r]&
R\Gamma_\ket(\A_{P^\gp,k},\Z/n)
\end{tikzcd}
\]
The right horizontal morphisms are quasi-isomorphisms by \cite[Theorem 7.4]{MR1922832},
the right vertical morphism is a quasi-isomorphism since $Q^\gp\cong P^\gp$,
and the left vertical morphism is a quasi-isomorphism by \cref{lem:toric2}.
Hence it suffices to show the upper left horizontal morphism is a quasi-isomorphism.
By \cref{lem:cohomology of log point} there is a quasi-isomorphism
\[
R\Gamma_\ket(\pt_{Q,k},\Z/n)
\simeq
R\Gamma_\ket(\Spec(k)\times (\P^1,0+\infty)^d,\Z/n)
\]
To conclude, 
we appeal to the quasi-isomorphism 
(due to \cite[Theorem 7.4]{MR1922832})
\[
R\Gamma_\ket(\Spec(k)\times (\P^1,0+\infty)^d,\Z/n)
\simeq
R\Gamma_\ket(\A_{Q,k},\Z/n)
\]
\end{proof}

\begin{lemma}
\label{lem:no log point}
Let $k$ be a separably closed field of characteristic $p>0$ and $P$ be a sharp fs monoid.
Then there is a natural quasi-isomorphism
\[
R\Gamma_\ket(k,\Z/p)
\simeq
R\Gamma_\ket(\pt_{k,P},\Z/p)
\]
\end{lemma}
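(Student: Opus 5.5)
We will compare Kummer étale cohomology of the log point $\pt_{P,k}$ with the étale cohomology of its underlying scheme $\Spec k$. The comparison runs through Nakayama's description of $\Sh_\ket(\pt_{P,k})$, already used in \cref{lem:toric2}.

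By \cite[Proposition 4.6]{MR1457738}, Kummer étale sheaves on $\pt_{P,k}$ are equivalent to discrete $\Z/p$-modules with a continuous action of the profinite group
\[
I(\pt_{P,k})\cong \Hom(P^\gp,\widehat{\Z}'(1)),
\]
where $\widehat{\Z}'(1)$ is the prime-to-$p$ part. Since $k$ is separably closed, there is no further Galois group contribution. The structure morphism $\pt_{P,k}\to \Spec k$ corresponds, under this equivalence, to inflation along $I\to 1$. The constant sheaf $\Z/p$ corresponds to the trivial module. Hence
\[
R\Gamma_\ket(\pt_{P,k},\Z/p)\simeq R\Gamma(I,\Z/p),
\]
the continuous group cohomology.

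Next, $I\cong \prod_{\ell\neq p}\Z_\ell(1)^{r}$ with $r=\mathrm{rank}\,P^\gp$. This is a profinite group of order prime to $p$ in the supernatural sense. Therefore $H^i(I,\Z/p)=0$ for $i>0$, by the usual averaging argument over finite quotients of prime-to-$p$ order, passed to the colimit. Also $H^0(I,\Z/p)=\Z/p$.

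On the other side, $R\Gamma_\ket(k,\Z/p)$ is étale cohomology of a separably closed field, so it equals $\Z/p$ concentrated in degree $0$. The pullback along $\pt_{P,k}\to\Spec k$ is therefore a quasi-isomorphism, which is natural in $P$.

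The main obstacle is the precise use of Nakayama's equivalence. We need that the Kummer étale site of $\pt_{P,k}$ only sees Kummer covers of degree prime to $p$, namely those étale on the underlying scheme. Degree-$p$ Kummer maps are not log étale in characteristic $p$, so the relevant fundamental group is indeed the prime-to-$p$ one. Should this identification be unavailable, we would instead argue with the colimit of $\pt_{P,k}\to\pt_{\frac1m P,k}$ for $m$ prime to $p$. We would then compute using \cref{lem:Kummer}: the Čech nerve is a product of finite constant groups $(\frac1mP^\gp/P^\gp)^n$ of order prime to $p$, so its $\Z/p$-cohomology vanishes in positive degrees.
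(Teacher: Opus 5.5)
Your argument is correct, but it takes a different route from the paper's.

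\textbf{The paper's route.} The paper never computes the inertia group. It uses the Artin--Schreier sequence $0\to\Z/p\to\cO\to\cO\to 0$, which is exact already on the strict \'etale site, to write both sides as fibres of $F-1$ acting on $R\Gamma_\ket(-,\cO)$. The comparison $R\Gamma_\ket(X,\cO)\simeq R\Gamma_\et(\ul{X},\cO)$ of \cite[Corollary 3.26]{MR2452875} then reduces the claim to the identity map of $k$.

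\textbf{Your route.} You pass through Nakayama's equivalence \cite[Proposition 4.6]{MR1457738} with discrete modules over the pro-prime-to-$p$ group $I=\Hom(P^\gp,\widehat{\Z}'(1))$. You then use the vanishing of $H^i(I,\Z/p)$ for $i>0$.

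\textbf{What each buys.} The paper's proof is two lines once the nontrivial quasi-coherent comparison for $\cO$ is granted. It is also not tied to log points: wherever that comparison holds, it identifies $p$-torsion Kummer \'etale cohomology with \'etale cohomology of the underlying scheme. Your proof needs no quasi-coherent input. It also explains conceptually why the log structure is invisible to $\Z/p$: degree-$p$ Kummer maps are not log \'etale in characteristic $p$, so only prime-to-$p$ inertia occurs. Finally, it uses only the tool the paper already invokes in \cref{lem:toric2} and \cref{lem:AP}, where $I(N)$ is defined as exactly your prime-to-$p$ group. So the obstacle you flag is not a real one, and the fallback via the covers $\pt_{\frac1m P,k}\to\pt_{P,k}$ is unnecessary.
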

\begin{proof}
By the Artin-Schreier exact sequence of strict \'etale sheaves on $\lSch/k$
\[
0
\to
\Z/p
\to
\cO
\to
\cO
\to
0
\]
it suffices to show there is a naturally induced quasi-isomorphism
\[
R\Gamma_\ket(k,\cO)
\to
R\Gamma_\ket(\pt_{k,P},\cO)
\]
This is a map from $R\Gamma_\ket(k,\cO)\simeq R\Gamma_\et(k,\cO)\simeq \Gamma(k,\cO)$ to 
$R\Gamma_\ket(\pt_{k,P},\cO)\simeq \Gamma(\pt_{k,P},\cO)$ (see, e.g., \cite[Corollary 3.26]{MR2452875}),
which is the identity map on $k$.
\end{proof}

For a log scheme $X$,
we set $X_\mathrm{red}:=X\times_{\ul{X}}\ul{X}_\mathrm{red}$.

\begin{lemma}\label{lem:reduced}
Let $k$ be a separably closed field,
$X\to \pt_{P.k}$ be a strict morphism of fs log schemes.
For $n>1$ there is a quasi-isomorphism
$$R\Gamma_{\ket}(X, \Z/n)\simeq R\Gamma_{\ket}(X_{\textnormal{red}}, \Z/n)$$
\end{lemma}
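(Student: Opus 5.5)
The natural approach is topological invariance of the Kummer étale site under nilpotent thickenings, in the same way the classical statement for étale cohomology reduces to the equivalence $X_{\mathrm{red},\et}\simeq X_\et$. First, the closed immersion $\iota\colon X_{\red}\to X$ is strict by construction, since $X_{\red}=X\times_{\ul{X}}\ul{X}_{\red}$ carries the pulled-back log structure. It is also a homeomorphism on underlying spaces, and it is a nilpotent thickening whenever $\ul{X}$ is quasi-compact; in general we work Zariski locally. The claim then follows once we show that pullback along $\iota$ induces an equivalence of Kummer étale sites $X_\ket\simeq (X_{\red})_\ket$. Given that equivalence, $\iota_*$ is exact and $\iota^*\Z/n=\Z/n$, so $R\Gamma_\ket(X,\Z/n)\simeq R\Gamma_\ket(X,\iota_*\Z/n)\simeq R\Gamma_\ket(X_{\red},\Z/n)$.

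To establish the equivalence of sites, I will argue Zariski locally using charts. Since $X\to\pt_{P,k}$ is strict, the constant chart $P\to \cM_X$ is a global chart, and so is the induced chart on $X_{\red}$. By the local structure of Kummer étale morphisms (Kato, Nakayama), every Kummer étale $U\to X$ is, strictly étale locally on $U$, of the form strict étale over $X\times_{\A_P}\A_{Q}$ for a Kummer homomorphism $P\to Q$ with order invertible on $X$. The base change $X_Q:=X\times_{\A_P}\A_Q$ satisfies $(X_Q)_{\red}$-compatibility: $X_{\red}\times_{\A_P}\A_Q\to X_Q$ is again a strict nilpotent thickening. Applying the classical topological invariance of the étale site (SGA~4, VIII~1.1) to $\ul{X_Q}$ shows that strict étale objects over $X_Q$ and over $X_{\red}\times_{\A_P}\A_Q$ correspond bijectively. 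Gluing these local descriptions, using that fully faithfulness is itself étale local, gives essential surjectivity and full faithfulness of $U\mapsto U\times_X X_{\red}$ on $X_\ket\to (X_{\red})_\ket$. The morphism is also compatible with coverings, since surjectivity is topological and $\iota$ is a homeomorphism.

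A slicker route is to use Nakayama's description of Kummer étale sheaves on $X$ over a log point chart. I would try this alternative first, because it is likely what the authors intend. Since $X$ is strict over $\pt_{P,k}$, the inertia of $\pt_{P,k}$ acts and, much as in \cref{lem:toric2}, $\Sh_\ket(X,\Z/n)$ can be described in terms of étale sheaves on $\ul{X}$ with an action of $I(\pt_{P,k})$ built from $P^{\gp}$. That description is insensitive to nilpotents by the classical topological invariance applied to $\ul{X}_{\red}\to\ul{X}$.

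The main obstacle is the case where $n$ is not invertible, namely when $p\mid n$ in characteristic $p$, because Kummer covers of degree divisible by $p$ are then not étale-like. To handle it, I would split $n$ into its prime-to-$p$ part and its $p$-power part. For the $p$-power part, I would either note that the Kummer étale topology only involves prime-to-$p$ Kummer extensions (the standard convention), so the argument above still applies, or use the Artin–Schreier sequence as in \cref{lem:no log point} to compare $\Z/p$-cohomology. Nilpotent invariance of $\cO$ modulo Frobenius would then be the key point; I would try to avoid that by relying on the site-equivalence argument, which does not depend on $n$.
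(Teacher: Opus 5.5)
Your proposal is correct, and the route you call ``slicker'' is exactly the paper's proof. \cite[Proposition 4.6]{MR1457738} identifies Kummer \'etale sheaves on an fs log scheme strict over $\pt_{P,k}$ (with $k$ separably closed) with \'etale sheaves on its underlying scheme carrying a continuous action of the profinite group $I(\pt_{P,k})$, which depends only on $P^{\gp}$ and $k$. Since $X_{\red}$ is again strict over $\pt_{P,k}$, everything reduces to the equivalence of the \'etale sites of $\ul{X}$ and $\ul{X}_{\red}$, i.e.\ \cite[Th\'eor\`eme IV.18.1.2]{EGA}.

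Your primary route is genuinely different and proves more. It shows that a strict closed immersion with the same underlying space induces an equivalence of Kummer \'etale sites, with no log point needed. The price is the local structure theorem, which applies with the global chart $P$ because $\ol{\cM}_{X,\bar x}\cong P$ at every point, so the chart is neat. You also need a gluing argument whose full-faithfulness step you only sketch. It can be done by hand: $1+\mathcal{N}$ is uniquely $m$-divisible for a nil ideal $\mathcal{N}$ and $m$ invertible. Hence lifts of $Q\to\Gamma(U_{\red},\cM)$ to $Q\to\Gamma(U,\cM)$ extending $P$ exist and are unique. Classical topological invariance then handles the strict \'etale part.

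Two small corrections. First, $X_{\red}\to X$ need not be a nilpotent thickening when $\ul{X}$ is quasi-compact (or Zariski locally) without noetherian hypotheses. This case actually occurs where the paper applies the lemma, e.g.\ $\Spec(\ol{k}\otimes_k\ol{k})$ for $k$ separably closed and imperfect. Nothing in either route needs nilpotence, though, since \cite[Th\'eor\`eme IV.18.1.2]{EGA} is stated for any closed subscheme with the same underlying space. Second, the case $p\mid n$ is not an obstacle. Both routes give an equivalence of topoi, which is insensitive to the coefficients, so the Artin--Schreier detour can simply be dropped.
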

\begin{proof}
Proposition 4.6 in \cite{MR1457738} reduces our claim to the étale sites of the 
underlying schemes, which is \cite[Théorème IV.18.1.2]{EGA}.
\end{proof}

For an fs log scheme $X$ and a geometric point $\ol{x}$ of $\ul{X}$,
the \emph{strict henselization of $X$ at $\ol{x}$} is the fs log scheme strict over $X$ 
such that the underlying scheme is the strict henselization of $\ul{X}$ at $\ol{x}$.

Let $\cC$ be an $\infty$-category with small limits and filtered colimits.
A functor $\lSch^{\text{op}}_{qcqs}/S \to \cC$ is called \emph{finitary} 
if it preserves small filtered colimits with strict affine transition morphisms.
Let $\cG$ be a sheaf on $S_\ket.$
The functor sending an fs log scheme $X \in \lSch/S$ to the bounded below chain complex 
$R\Gamma_{\ket}(X, f^*\cG)$ is finitary, see \cite[Lemma 4.2]{MR1457738},
where $f\colon X\to S$ is the structure morphism.
The following is a logarithmic version of \cite[Lemma 5.1]{MR4278670}, 
whose proof is essentially the same.

\begin{lemma}\label{lem:5.1}
Let $\cF \colon \lSch_{qcqs}^{\text{op}}/S \to D(\Z)^{\geq 0}$ be a finitary functor. 
Suppose $\cF$ satisfies strict étale descent. 
If the base change of the morphism $f\colon Y \to X$ in $\lSch_{qcqs}/S$ along 
any strict henselizations of $X$ satisfy $\cF$-descent, 
then $f$ satisfies $\cF$-descent.
\end{lemma}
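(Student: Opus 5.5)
We follow the argument of \cite[Lemma 5.1]{MR4278670}, replacing the étale site of schemes by the strict étale site of fs log schemes. Write $\check{C}(f)$ for the Čech nerve $Y^{\times_X(\bullet+1)}$ and
\[
\cF(\check{C}(f)) := \lim\bigl(\cF(Y)\rightrightarrows \cF(Y\times_X Y)\rightrightrightarrows\cdots\bigr).
\]
The goal is to show that the cofiber $C(X)$ of $\cF(X)\to \cF(\check{C}(f))$ vanishes. The plan is to regard $C$ as a functor on the small strict étale site of $X$, show it is a strict étale sheaf, check that its stalks vanish, and then conclude that it is zero.

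\textbf{Step 1: $C$ is a strict étale sheaf.} For $U\to X$ strict étale with $U$ qcqs, set $C(U)$ to be the cofiber of $\cF(U)\to \cF(\check{C}(f_U))$, where $f_U\colon Y\times_X U\to U$ is the base change. Since strict morphisms are stable under base change, each term $U\mapsto \cF(Y^{\times_X (n+1)}\times_X U)$ is a strict étale sheaf on $X_{\set}$, because $\cF$ satisfies strict étale descent. Limits of sheaves are sheaves, and so are cofibers in the stable setting. Hence $C$ is a $D(\Z)$-valued strict étale sheaf on $X$, and by the hypothesis on $\cF$ its values lie in $D(\Z)^{\geq -1}$.

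\textbf{Step 2: stalks vanish.} Let $\ol{x}$ be a geometric point of $\ul{X}$, and write the strict henselization $X^{sh}_{\ol{x}}$ as the cofiltered limit of strict affine étale neighborhoods $U_\alpha$, which have strict affine transition maps. Since $\cF$ is finitary, $\cF(X^{sh}_{\ol{x}})=\operatorname{colim}_\alpha \cF(U_\alpha)$, and likewise for each $Y^{\times(n+1)}\times_X U_\alpha$, whose limit is $Y^{\times(n+1)}\times_X X^{sh}_{\ol{x}}$ because fs fiber products commute with such limits. The key point is to commute the filtered colimit with the totalization. Because everything lies in $D(\Z)^{\geq 0}$, for each fixed cohomological degree the totalization is computed by a finite truncation of the cosimplicial object, so filtered colimits commute with $\lim_\Delta$ degreewise. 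This yields
\[
\operatorname{colim}_\alpha C(U_\alpha)\simeq C(X^{sh}_{\ol{x}}),
\]
which is zero by the hypothesis that the base change of $f$ to $X^{sh}_{\ol{x}}$ satisfies $\cF$-descent.

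\textbf{Step 3: conclusion.} A strict étale sheaf of complexes whose stalks all vanish is zero, provided it is hypercomplete. Here boundedness below ensures this: each cohomology sheaf $\mathcal{H}^i(C)$ has zero stalks, hence vanishes, and a bounded-below sheaf with vanishing cohomology sheaves is zero by Postnikov convergence. Evaluating at $X$ itself then gives $C(X)=0$, which is the required $\cF$-descent for $f$.

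The main obstacle is Step 2, specifically the exchange of the filtered colimit with the infinite totalization. This is exactly where coconnectivity ($D(\Z)^{\geq 0}$) is used, and it must be checked carefully, together with the compatibility of strict henselization with fs fiber products. A secondary point to verify is hypercompleteness in Step 3. If needed, this can be avoided by arguing degreewise on cohomology sheaves.
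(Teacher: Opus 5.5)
Your proposal is correct and follows essentially the same route as the paper. Both arguments regard the two sides as strict étale sheaves on $X$; you do this through their cofiber. Both use finitarity together with the fact that totalization in $D(\Z)^{\geq 0}$ commutes with filtered colimits to identify stalks with values on strict henselizations; your finite-truncation argument is the paper's Dold--Kan remark. Both then conclude because $X_{\set}\simeq \ul{X}_\et$ has enough points. Your Step 3 uses that the cofiber is cohomologically bounded below, hence detected by its cohomology sheaves, which makes explicit a hypercompleteness point the paper leaves implicit.
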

\begin{proof}
Our claim states there is a naturally induced quasi-isomorphism 
\begin{equation}\label{eq:quasi-isomorphicsheaves}
\cF(X) \to \lim \left( \cF(Y) \rightrightarrows \cF(Y \times_X Y) \rightrightrightarrows\cdots \right)
\end{equation}
Since $\cF$ satisfies strict étale descent, we view 
$X'\mapsto \cF(X')$ and 
$$X' \mapsto \lim \left( \cF(X'\times_X Y) \rightrightarrows \cF(X'\times_X Y \times_X Y)
\rightrightrightarrows
\cdots \right)$$ 
as strict \'etale sheaves.
The former is finitary by assumption and the latter is finitary since the totalization in 
$D(\Z)^{\geq 0}$ commutes with filtered colimits.
For the latter, 
note that totalization of cosimplicial objects in $D(\mathbb{Z})^{\ge 0}$ commutes with filtered colimits. 
Indeed, by the $\infty$-categorical Dold--Kan correspondence, 
the degree-$n$ term of the total complex is a finite limit of terms in bidegrees $(p,q)$ with $p,q\ge 0$ and $p+q=n$. Since filtered colimits of abelian groups are exact, 
they commute with these finite limits. 
Hence the latter is also finitary.
Our assumption says that these two sheaves are isomorphic on stalks after base change to 
strict henselizations of $X$. 
Since $\cF$ satisfies strict étale descent 
and $X_{\set}\simeq \ul{X}_\et$ has enough points by \cite[Tag 03PU]{stacks-project},
thus \eqref{eq:quasi-isomorphicsheaves} is a quasi-isomorphism.
\end{proof}

The next result is a generalization of \cite[Exposé Vbis]{SGA4} and \cite[Proposition 5.3.3]{CDEtale} 
(see also \cite[Proposition 5.2]{MR4278670}) to logarithmic geometry.

\begin{theorem}[Log $v$-descent for log étale cohomology with torsion coefficients] 
\label{thm:etalecohomology}
Suppose $S$ is a scheme with trivial log structure and $\cG$ is a torsion sheaf on 
$S_{\letale}\simeq S_{\et}$.
Then the functor $\cF \colon (\lSch_{qcqs}/S)^{\textnormal{op}} \to D(\Z)^{\geq 0}$ given by
\[
(\alpha \colon X \to S)
\mapsto
R\Gamma_{\letale}(X,\alpha^*\cG)
\]
satisfies log $v$-descent.
\end{theorem}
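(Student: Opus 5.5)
The plan is to reduce to Kummer étale cohomology and then split an arbitrary log $v$-cover into pieces that can be handled one at a time via \cref{lem:descentproperties}.

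\emph{Reduction to Kummer étale cohomology.} Condition (i) of \cref{def:logvsheaf} is immediate, since cohomology takes finite disjoint unions to products. For (ii), I first compare $R\Gamma_{\letale}(X,\alpha^*\cG)$ with $R\Gamma_{\ket}(X,\alpha^*\cG)$. For torsion coefficients, log étale cohomology is Kummer étale cohomology made invariant under dividing covers, i.e.\ the colimit over dividing covers. Dividing covers are log $v$-covers by \cref{prop:diviningcover is a logvcover}, and they are log étale coverings. So it suffices to prove universal descent for $\cF_{\ket}\colon X\mapsto R\Gamma_{\ket}(X,\alpha^*\cG)$ along log $v$-covers, together with descent along dividing covers, which holds tautologically for the log étale version.

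\emph{Splitting a log $v$-cover.} Let $f\colon Y\to X$ be a log $v$-cover. By \cref{thm:mainsubtrusive} it is universally subtrusive. Working Zariski locally and using \cite[Theorem 1.1]{integrallogblowup}, I pass to a dividing cover $X'\to X$ such that $f'\colon Y\times_X X'\to X'$ is integral, hence exact. By \cref{lem:descentproperties}(ii),(iii) it suffices to treat $f'$ and the dividing cover separately. Write $f'$ with target $X$ again. Since $f$ is exact, \cref{v.2} shows that $\ul{f}$ is universally subtrusive. Factor $f$ as
\[
Y\xrightarrow{\;u\;} Y':=X\times_{\ul X}\ul Y\xrightarrow{\;s\;} X.
\]
Here $s$ is strict with $\ul s=\ul f$ a $v$-cover of schemes, and $\ul u$ is an isomorphism. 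The two factors are then handled as follows.
\begin{itemize}
\item[(a)] \emph{Strict case.} Since $\cF_{\ket}$ is finitary and satisfies strict étale descent, \cref{lem:5.1} lets me replace $X$ by a strict henselization. For strict morphisms, Nakayama's description \cite[Proposition 4.6]{MR1457738} identifies Kummer étale sheaves with étale sheaves of $I$-modules on the underlying scheme. Descent then follows from $v$-descent for étale cohomology of torsion sheaves \cite[Proposition 5.3.3]{CDEtale}, applied to these coefficients.
\item[(b)] \emph{Underlying isomorphism.} Again by \cref{lem:5.1} and proper base change \cite[Theorem 5.1]{MR1457738}, I reduce to a geometric point. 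Using \cref{lem:reduced}, the target becomes $\pt_{P,k}$ with $k$ separably closed and the source $\pt_{Q,k}$, for a local homomorphism $\theta\colon P\to Q$ of sharp fs monoids. All terms of the Čech nerve are again log points over $k$ (compare \cref{lem:Kummer}). By \cref{lem:cohomology}, for $n$ prime to $p$ each term has cohomology $R\Gamma_\ket(\A_{(-)^\gp,k},\Z/n)$; by \cref{lem:no log point}, the $p$-part is just $R\Gamma(k,\Z/p)$.
\end{itemize}

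\emph{Main obstacle.} Step (b) is the hardest. The natural case distinction is whether $\theta^\gp\otimes\Q$ is surjective.
\begin{itemize}
\item If $\theta$ is Kummer, then by \cref{lem:Kummer} the Čech nerve is $Q\oplus(Q^\gp/P^\gp)^{\oplus m}$. Combined with \cref{lem:toric2}, the cosimplicial object is the Čech nerve of a Galois torsor for the finite group $\Hom(Q^\gp/P^\gp,\mu_n)$ in the Kummer étale topology. Descent then holds because Kummer étale covers satisfy descent.
\item In general, I would reduce to the case $\theta^\gp$ surjective with split kernel by choosing a section. Then $\pt_{Q,k}\to\pt_{P,k}$ factors through a morphism with a section, up to a Kummer map and a product with $\pt_{\N^r}$, and \cref{lem:descentproperties}(i) applies to that factor.
\end{itemize}
The remaining point, where I expect the real work, is the descent along the projection $\pt_{\N}\times X\to X$. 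I would deduce it from \cref{lem:cohomology of log point}, which identifies this projection cohomologically with $(\P^1,0+\infty)\times X\to X$; the latter is proper and surjective, so its descent comes from the strict and dividing cases.
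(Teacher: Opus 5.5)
Your outer reductions match the paper's: a dividing cover to make $f$ integral, the factorization $Y\to X\times_{\ul X}\ul Y\to X$, reduction to log points over a separably closed field, and \cref{lem:cohomology} and \cref{lem:no log point}. But you omit the step that makes the non-strict part tractable. After choosing a chart $P$ of $X$, the paper base changes along $\A_m\colon\A_P\to\A_P$ so that $f$ becomes \emph{saturated}, which is justified by \cref{lem:AP}. Without this, step (b) fails as written.

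First, the terms of the \v{C}ech nerve are not log points over $k$. By \cref{lem:Kummer} they have monoids $Q\oplus T^{\oplus m}$ with $T=Q^\gp/P^\gp$, and underlying scheme $\Spec k[T^{\oplus m}]$. Second, and more seriously, in your Kummer case you appeal to Kummer \'etale descent. When $p=\mathrm{char}\,k$ divides $|T|$ the morphism is not Kummer \'etale; this already happens for $\theta=p\colon\N\to\N$, which is integral. You give no argument for this case. This $p$-power part is exactly what the Frobenius argument of \cref{lem:AP} handles: a universal Kummer homeomorphism, together with $I(M)\xrightarrow{\cong}I(M')$.

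Once $f$ is saturated, the nerve is $\pt_{Q^i,k}$ with $Q^i=Q\oplus_P\cdots\oplus_P Q$. For $\ell=p$, \cref{lem:no log point} makes the cosimplicial object constant. For $\ell\neq p$, \cref{lem:cohomology} turns it into the \v{C}ech nerve of $\A_{Q^\gp,k}\to\A_{P^\gp,k}$, which has a section. No case split on $\theta^\gp\otimes\Q$ is needed. Your reduction to ``$\theta^\gp$ surjective with split kernel'' is also backwards: exactness forces $\theta^\gp$ to be injective, and what matters is a torsion-free cokernel.

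Your last step is circular. The projection $(\P^1,0+\infty)\times X\to X$ is proper and surjective, but it is neither strict nor dividing. In your own splitting it factors as $(\P^1,0+\infty)\times X\to\P^1\times X\to X$, and the first factor has underlying isomorphism. Step (b) would therefore reduce it to geometric points, and over $0$ and $\infty$ it is precisely $\pt_\N\times\bar x\to\bar x$, the case you set out to prove.

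What does work is that this projection has a section through $1\in\P^1$, where the log structure is trivial, so \cref{lem:descentproperties}(i) applies. You must then apply \cref{lem:cohomology of log point} to every term of the two \v{C}ech nerves, $X\times\pt_{\N^{m+1}}$ versus $X\times(\P^1,0+\infty)^{m+1}$, compatibly via restriction along the closed immersions at $0$. Applying it only to the first term is not enough.

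Finally, the $\letale$ versus $\ket$ comparison should be the equivalence $R\Gamma_\ket\simeq R\Gamma_\letale$ for torsion coefficients \cite[Proposition 5.4 (2)]{MR3658728}, as in the paper. Without it you cannot combine descent for $\cF_\ket$ along the non-dividing pieces with ``tautological'' descent for $\cF$ along dividing covers.
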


We begin with two lemmas for the functor $\cF$ in \cref{thm:etalecohomology}.

\begin{lemma}
\label{lem:reduction}
Let $f\colon Y\to X$ be a morphism in $\lSch_{qcqs}/S$.
If the pullback $Y\times_X Z\to Z$ satisfies $\cF$-descent for every morphism $Z\to X$ in 
$\lSch_{qcqs}/S$ such that $\ul{Z}$ is the spectrum of a separably closed field,
then $f$ satisfies universal $\cF$-descent.
\end{lemma}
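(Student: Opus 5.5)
Fix a base change $X'\to X$ in $\lSch_{qcqs}/S$ and write $f'\colon Y':=Y\times_X X'\to X'$. Universal $\cF$-descent for $f$ means $\cF$-descent for every such $f'$. The hypothesis is stable under base change: if $Z\to X'$ has $\ul{Z}$ the spectrum of a separably closed field, then so does the composite $Z\to X'\to X$, and $Y'\times_{X'}Z\cong Y\times_X Z$. Hence it suffices to show that $f$ itself satisfies $\cF$-descent, using only the hypothesis on $f$.

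I will apply \cref{lem:5.1}, which needs three inputs.

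1. $\cF$ takes values in $D(\Z)^{\geq 0}$ and is finitary. For the finitary property, use that $R\Gamma_{\letale}$ agrees with $R\Gamma_{\ket}$ for torsion coefficients pulled back from a scheme with trivial log structure (the comparison of log étale and Kummer étale cohomology), and then apply \cite[Lemma 4.2]{MR1457738}.
2. $\cF$ satisfies strict étale descent, since strict étale covers are log étale covers.
3. For every strict henselization $\tilde X$ of $X$ at a geometric point $\bar x$, the pullback $\tilde f\colon \tilde Y:=Y\times_X\tilde X\to\tilde X$ satisfies $\cF$-descent.

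Everything therefore reduces to (3), where $\tilde X$ is strict over $X$ with strictly henselian local underlying scheme and closed point $\bar x$. The separably closed residue field $k$ of $\bar x$ gives a strict closed immersion $i\colon Z:=\tilde X\times_{\ul{\tilde X}}\Spec k\to\tilde X$, so $Z\cong\pt_{P,k}$ by \cref{exm:log point}, and by hypothesis $Y\times_X Z\to Z$ satisfies $\cF$-descent. Since fiber products commute with base change, the Čech nerve of $\tilde f$ restricts along $i$ to the Čech nerve of $Y\times_X Z\to Z$. It therefore suffices to show that $\cF(W)\to\cF(W\times_{\tilde X}Z)$ is a quasi-isomorphism for every $W$ in the Čech nerve of $\tilde f$, levelwise and naturally. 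The naive version of this is false, because $W$ is not local, so I will use a proper base change argument instead.

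First reduce to $\tilde f$ proper. By \cref{thm:structuretheorem}, or directly from \cref{thm:mainsubtrusive} together with the scheme-theoretic description \cite[Theorems 3.10, 3.12]{Rydh} after an integrality-making dividing cover, $\tilde f$ can be refined, up to compositions of covers, by a proper surjective morphism and a strict étale or Zariski cover. Strict étale covers satisfy descent by (2), and dividing covers are handled by \cref{prop:diviningcover is a logvcover} and the same argument. By \cref{lem:descentproperties}, descent for the composite refinement gives descent for $\tilde f$. I must check that the hypothesis about separably closed points passes to the refining morphisms. This is fine, since refinements are again maps over $X$ and restrictions to $Z$ of descent-satisfying covers still satisfy descent by \cref{lem:descentproperties.(iii)}.

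Once the cover is proper, every $W$ in the Čech nerve is proper over $\tilde X$. Kummer étale proper base change \cite[Theorem 5.1]{MR1457738}, combined with the henselian property of $\tilde X$ (cohomology of a proper log scheme over a strictly henselian base equals that of the closed fiber), gives $\cF(W)\simeq\cF(W\times_{\tilde X}Z)$ naturally in $W$. This needs the usual limit argument from finite-type bases, which the finitary property supplies.

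I expect the main obstacle to be this reduction to the proper case: making the refinement legitimate for non-noetherian qcqs inputs, via noetherian approximation and the finitary property, and reconciling fs fiber products with the underlying scheme fiber products.
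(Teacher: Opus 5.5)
\textbf{Verdict: the reduction to proper $\tilde f$ is a genuine gap, and it cannot be filled as the lemma is stated.} The rest of your skeleton matches the paper's proof exactly: stability of the hypothesis under base change, passing to strict henselizations via \cref{lem:5.1}, Nakayama's proper base change on each term of the \v{C}ech nerve, and the hypothesis applied to the closed fiber $\tilde X\times_{\ul{\tilde X}}\Spec k$. The failing step is ``first reduce to $\tilde f$ proper,'' for two reasons.

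First, \cref{thm:structuretheorem}, \cref{thm:mainsubtrusive} and Rydh's structure theorems apply only to universally subtrusive morphisms. Nothing in the hypotheses makes $f$ a log $v$-cover.

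Second, even for a log $v$-cover, the hypothesis does not pass to the proper factor $W\to\tilde X$ of a refinement $Y'\to W\to\tilde X$ of $\tilde f$. \ref{lem:descentproperties.(iii)} goes from a composite to its first factor. So to get descent for $W\times_{\tilde X}Z\to Z$ you would need it for $Y'\times_{\tilde X}Z\to Z$. That in turn would require knowing something about $Y'\times_{\tilde X}Z\to Y\times_X Z$, and you know nothing about it.

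\textbf{The statement is false for non-proper $f$.} Take $S=X=\Spec R$ with $R$ a strictly henselian DVR with fraction field $K$ and residue field $k$. Use trivial log structures, $\cG=\Z/n$, and $Y=\Spec K\amalg\Spec k$.
\begin{itemize}
\item For every $Z\to X$ with $\ul Z$ the spectrum of a separably closed field, $Y\times_X Z\cong Z$, so the hypothesis holds.
\item However $Y\times_X Y\cong Y$, so the \v{C}ech nerve is constant. Descent would then force $\Z/n=H^0(X,\Z/n)\cong H^0(Y,\Z/n)=(\Z/n)^2$, which is false.
\end{itemize}

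\textbf{What the paper actually proves.} Its proof silently applies proper base change to $Y\to X$, i.e.\ it uses that $f$ is proper. This does hold in \cref{lem:AP}, where $\A_m$ is finite, and in the case of \cref{thm:etalecohomology} where $\ul f$ is an isomorphism. You were right that the identification $\cF(W)\simeq\cF(W\times_{\tilde X}Z)$ needs properness. The correct repair is to add ``$f$ proper'' to the hypotheses and delete your reduction step. With that change, your argument is correct and coincides with the paper's.
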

\begin{proof}
It suffices to show that $f$ satisfies $\cF$-descent. 
By \cref{lem:5.1} we may assume $\ul{X}$ is a strictly henselian 
local ring with a separably closed residue field $k$.
The proper base change theorem \cite[Theorem 5.1]{MR1457738} yields 
$\cF(Y)\simeq \cF(Y\times_{\ul{X}} \Spec(k))$, 
and similarly for the other terms in the \v{C}ech nerve of $Y\to X$. 
To conclude,
observe that the underlying scheme of $X\times_{\ul{X}}\Spec{k}$ is $\Spec{k}$.
\end{proof}

\begin{lemma}
\label{lem:AP}
Let $P$ be an fs monoid and assume $S$ is the spectrum of a field of characteristic $p$.
For $m\geq 2$,
the morphism $\A_m\colon S\times \A_P\to S\times \A_P$ induced by the multiplication by 
$m$ homomorphism $m\colon P\to P$ satisfies universal $\cF$-descent.
\end{lemma}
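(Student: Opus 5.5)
We plan to reduce to the geometric points and then split on whether $m$ is prime to $p$. By \cref{lem:reduction}, it suffices to show that for every morphism $Z\to S\times\A_P$ with $\ul{Z}=\Spec{k}$, $k$ separably closed, the pullback $Z\times_{\A_P}\A_P\to Z$ along $\A_m$ satisfies $\cF$-descent. Since $\ul{Z}$ is a point and the log structure is fs, \cref{exm:log point} gives $Z\cong\pt_{Q,k}$ for a sharp fs monoid $Q$. The morphism $Z\to\A_P$ is described by a homomorphism $P\to Q\oplus k^*$, and the fiber product with $\A_m$ is computed on charts.

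Next we write $\A_m$ as a composite of a Kummer part and a part that is harmless for $\cF$. Multiplication by $m$ on an fs monoid $P$ is injective, and it is Kummer after passing to $P\to P$; its \v{C}ech nerve is described by \cref{lem:Kummer}: $P^{n}\cong P\oplus(P^\gp/mP^\gp)^{\oplus n}$. Pulling back to $Z$ and saturating, each term of the \v{C}ech nerve of $Y:=Z\times_{\A_P}\A_P\to Z$ is an fs log scheme over $k$. Its underlying scheme is finite over $\Spec{k}$, possibly non-reduced, with strict structure over some $\pt_{Q',k}$ where $\ol{Q'}$ has the same group up to finite index. By \cref{lem:reduced} we may replace each term by its reduction, which is a finite disjoint union of copies of $\pt_{Q_i,k}$. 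Then \cref{lem:toric2} identifies the cohomology of each $\pt_{Q_i,k}$ with that of a log point depending only on $Q_i^\gp$. Finally, \cref{lem:cohomology} identifies it with $R\Gamma_\ket(\A_{Q_i^\gp,k},\Z/n)$ for $n$ prime to $p$, and \cref{lem:no log point} identifies it with $R\Gamma(k,\Z/p)$ for $p$-power torsion. We also reduce $\cG$ to constant coefficients $\Z/\ell^r$, since $\ul Z$ is a separably closed point and $\cG$ is torsion, pulled back from $S_\et$.

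For the $p$-primary part, \cref{lem:no log point} shows that every term of the \v{C}ech nerve has $\cF$ equal to $R\Gamma(k,\Z/p)$ tensored with the functions on the set of points. The nerve is then a \v{C}ech nerve of a surjection of finite sets, up to nilpotents and purely inseparable extensions, whose totalization is trivially correct. For $\ell\neq p$, we use the Kummer étale description: log étale cohomology of $\pt_{Q,k}$ is the group cohomology of $I(\pt_{Q,k})\cong\Hom(Q^\gp,\hat\Z'(1))$. The map $Y\to Z$ is, on the $\ell$-part, a Kummer étale cover (a torsor under $\mu_m$-type group schemes), up to a radicial part coming from the $p$-part of $m$. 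A Kummer étale surjection satisfies $\cF$-descent because $\cF$ is a Kummer étale (indeed log étale) sheaf, and radicial homeomorphisms induce isomorphisms on the cohomology of each term by \cref{lem:reduced} and \cref{lem:no log point}. Concretely, we factor $m=p^a m'$ with $(m',p)=1$. The $m'$-part is Kummer étale, hence a $\ket$-cover. The $p^a$-part becomes, after reduction and via \cref{lem:toric2}, an isomorphism on each term, since it induces an isomorphism on $\Q$-groups and the residue field stays $k$.

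The main obstacle is the $p^a$-part, that is, the Kummer but non-étale multiplication by $p$. We must check carefully that on each term of its \v{C}ech nerve the reduced underlying scheme is a single point with residue field $k$. We must also check that the induced maps of monoids have isomorphic groups up to the required identification, so that \cref{lem:toric2}, which needs $\theta^\gp$ to be an isomorphism, applies; possibly this means first replacing $Q$ by a monoid with the same $\Q$-span and using that $\ell$-adic coefficients are insensitive to $p$-power index. Given this, the nerve is levelwise constant in $\cF$, so descent holds, and \cref{lem:descentproperties.(ii)} combines the two factors.
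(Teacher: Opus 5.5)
Your architecture is the paper's: split off the prime-to-$p$ factor of $m$, which is a Kummer \'etale cover (the paper first reduces to $m$ prime). For the Frobenius factor, reduce via \cref{lem:reduction} to $X=\pt_{Q,k}$ and note that the reduced \v{C}ech nerve of $Y\to X$ is constant with value $\pt_{M',k}$, $M'=\ol{Q'}$. Everything then comes down to the comparison $\cF(\pt_{M,k})\simeq\cF(\pt_{M',k})$ with $M=\ol{Q}$.

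You correctly locate this as the crux, but you do not prove it, and the tool you name does not apply. \cref{lem:toric2} needs $\theta^{\gp}$ to be an isomorphism. Here $M^{\gp}\to M'^{\gp}$ is only injective, with nonzero cokernel a finite $p$-group, and an isomorphism after $\otimes\Q$ is not enough. Already for $P=Q=\N$ and $X=\pt_{\N,k}$ you get $Y_{\red}\cong\pt_{\N,k}$, with $Y\to X$ induced by $p\colon\N\to\N$. Replacing $Q$ by another monoid with the same $\Q$-span does not help, since the obstruction is the $p$-power index of the lattices, not the choice of monoid.

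You even write down the right tool, $I(\pt_{Q,k})\cong\Hom(Q^{\gp},\hat{\Z}'(1))$, but never apply it to this map. Doing so is exactly the paper's argument. By \cite[Proposition 4.6]{MR1457738}, the Kummer \'etale topos of $\pt_{N,k}$ ($k$ separably closed) is governed by the prime-to-$p$ profinite group $I(N)=\lim_{(n,p)=1}\Hom(N^{\gp},\mu_n(k))$. Since $\Hom(C,\mu_n)=\mathrm{Ext}^1(C,\mu_n)=0$ for a finite $p$-group $C$ and $(n,p)=1$, the map between $I(M')$ and $I(M)$ is an isomorphism. The paper phrases this by putting $M^{\gp}\to M'^{\gp}$ in diagonal form with entries $1$ or $p$. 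This gives $\cF(\pt_{M,k})\simeq\cF(\pt_{M',k})$ for all torsion coefficients at once, so your separate $\Z/p$ argument via \cref{lem:no log point} is correct but unnecessary. Alternatively, for $\ell\neq p$ you could use \cref{lem:cohomology} to move the comparison to the tori $\A_{M'^{\gp},k}\to\A_{M^{\gp},k}$. That map is a purely inseparable isogeny, hence a universal homeomorphism, hence an isomorphism on \'etale cohomology.

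The other check you leave open also needs an argument: that every term of the nerve has a one-point reduced underlying scheme. The paper gets it from the fact that $\A_p$ is the relative Frobenius, hence a universal Kummer homeomorphism \cite[Th\'eor\`eme 2.4]{zbMATH01594939}. With tools you already cite it is also direct. Write $Y=X\times_{\A_Q}\A_{Q'}$ with $Q'=(Q\oplus_{P,p}P)^{\sat}$. Every element of $Q'$ has its $p$-th multiple in $Q$, so every section of $\Gamma(\ul{Y},\cO)$ has $p$-th power in $k$, and $\ul{Y}$ is a single point. By \cref{lem:Kummer}, the $n$-th term of the nerve has underlying scheme $\ul{Y}\times\Spec\Z[G^{\oplus n}]$, where $G=Q'^{\gp}/Q^{\gp}$ is a finite $p$-group. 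In characteristic $p$, $k[G^{\oplus n}]$ is local Artinian with residue field $k$, so the reduced term is $\ul{Y}_{\red}$. Strictly speaking, the residue field of $\ul{Y}$ can be a purely inseparable extension of $k$ rather than $k$ itself, both in your ``the residue field stays $k$'' and in the paper; this is harmless for Kummer \'etale cohomology.
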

\begin{proof}
Using the prime factorization of $m$ and \ref{lem:descentproperties.(ii)},
we may assume $m$ is a prime number.
If $m\neq p$,
then $\A_m$ is a Kummer \'etale cover,
so $\A_m$ satisfies universal $\cF$-descent.

Let $g\colon X\to S\times \A_P$ in $\lSch_{qcqs}/S$ be a morphism in $\lSch_{qcqs}/S$ 
such that $\ul{X}=\Spec(k)$ is the spectrum of a separably closed field.
When $m=p>0$, 
by \cref{lem:reduction}
it suffices to show the 
pullback $f\colon Y\to X$ of $\A_m$ along $g$ satisfies $\cF$-descent.
Let $P\to Q$ be a chart of $X\to S\times \A_P$,
and consider $Q':=Q\oplus_{P,p}P$.
Since $\A_m$ is the relative Frobenius, 
it is a universal Kummer homeomorphism in the sense of \cite[D\'efinition 2.1]{zbMATH01594939} 
by \cite[Th\'eor\`eme 2.4]{zbMATH01594939}.
Thus $\ul{Y}_\red \cong \Spec{k}$.

Similarly,
$\ul{Y_i}_\red \cong \Spec{k}$ for every term $Y_i$ in the \v{C}ech nerve of $Y\to X$.
We use \cref{lem:Kummer} to identify 
\[
Y_\red \rightrightarrows (Y\times_X Y)_\red
\rightrightrightarrows
(Y\times_X Y\times_X Y)_\red 
\rightrightrightrightarrows
\cdots
\]
with the constant diagram for $M':=\ol{Q'}$
\[
\pt_{M',k} \rightrightarrows \pt_{M',k}
\rightrightrightarrows
\pt_{M',k}
\rightrightrightrightarrows
\cdots
\]
By \cref{lem:toric2}
it suffices to show $\cF(\pt_{M,k})\simeq \cF(\pt_{M',k})$,
where $M:=\ol{Q}$.

For an fs monoid $N$ we form the profinite group
\[
I(N):=\lim_{\gcd(n,p)=1} \Hom(N^\gp,\ker(k^*\xrightarrow{n} k^*))
\]
By \cite[Proposition I.1.3.5.(2)]{Ogu}
$M^\gp$ and $M'^\gp$ are torsion free.
Let $\{x_1,\ldots,x_d\}$ be a basis of the free abelian group $M^\gp$.
Then for each $i$, $\Z x_i\oplus_{P^\gp,p}P^\gp$ is isomorphic to $\Z y_i$ for some $y_i\in M'^\gp$.
Moreover, the homomorphism $\Z x_i\to \Z y_i$ is the identity or multiplication by $p$.
Using the basis $\{y_1,\ldots,y_d\}$ of $M'^\gp$,
the homomorphism $M^\gp\to M'^\gp$ becomes a diagonal matrix with entries $1$ or $p$,
so that the naturally induced homomorphism $I(M)\to I(M')$ is an isomorphism.
Proposition 4.6 in \cite{MR1457738} implies the equivalence $\cF(\pt_{M,k})\simeq \cF(\pt_{M',k})$.
\end{proof}

\begin{proof}[Proof of \cref{thm:etalecohomology}]
We must verify the two sheaf conditions in \cref{def:logvsheaf}.
As a consequence of log \'etale descent,
the functor $\cF$ takes disjoint unions of fs log schemes to products. 
It remains to show that every log $v$-cover $f \colon Y \to X$ 
of fs log schemes satisfies universal $\cF$-descent.
Log \'etale descent allows us to work dividing Zariski locally on $X$.
Hence, 
by \cite[Theorem 1.1]{integrallogblowup}, 
we may assume $f$ is integral.
We may also assume that $X$ has a chart $P$ for some fs monoid $P$.
Since $\cG$ is a torsion sheaf, \cite[Proposition 5.4 (2)]{MR3658728} yields an equivalence
\[
R\Gamma_\ket(X,\alpha^*\cG)
\simeq
R\Gamma_\letale(X,\alpha^*\cG)
\]
By \cref{lem:reduction} we may assume that $S$ is the spectrum of a separably closed field 
of characteristic $p$.
We claim there exists an integer $m\geq 1$ such that the pullback 
$Y\times_{\A_P,\A_m} \A_P\to X\times_{\A_P,\A_m} \A_P$ of $f$ is saturated,
where $\A_m\colon \A_P\to \A_P$ is induced by the multiplication map $m\colon P\to P$.
If $f$ has a chart, this follows from \cite[Theorem I.4.9.1]{Ogu}. 
In general,
let $\{Y_i\to Y\}_{i\in I}$ be a (finite) Zariski covering such that each $Y_i\to X$ admits a chart; by the above, there is a 
corresponding integer $m_i$.
Let $m$ be the product of all the $m_i$.
Then the pullback $Y_i\times_{\A_P,\A_m} \A_P\to X\times_{\A_P,\A_m} \A_P$ of $f$ is saturated for each $i$ and hence $Y\times_{\A_P,\A_m} \A_P\to X\times_{\A_P,\A_m} \A_P$ is saturated too.
Owing to \cref{lem:AP} we may assume $f$ is saturated.

Since $\cF$ is finitary, 
arguing as in \cite[Proof of Proposition 5.2]{MR4278670} we may assume $\ul{f}$ is finitely presented; hence it admits a refinement that is the composite of a
quasi-compact open covering and a proper surjective morphism of schemes by \cite[Theorem 3.12]{Rydh}.
Consider the factorization $Y\xrightarrow{g} \ul{Y}\times_{\ul{X}}X\xrightarrow{h} X$ of $f$.
Observe that $\ul{g}$ is an isomorphism and $h$ is strict.
Since $\ul{h}=\ul{f}$, $h$ admits a refinement that is a composite of quasi-compact open covering and a strict proper surjective morphism.
By \ref{lem:descentproperties.(ii)}
we reduce to proving universal $\cF$-descent for the following three cases: 
$f$ is a quasi-compact open covering, 
$f$ is strict proper surjective, 
and $\ul{f}$ is an isomorphism.

Kummer \'etale descent settles the case of a quasi-compact open covering
since every quasi-compact open covering is an example of a Kummer \'etale covering.

For strict proper surjective morphisms, 
\cref{lem:5.1} allows us to assume $\ul{X}=\pt_{P,k}$ for some fs monoid $P$ 
and separably closed field $k$.
Let $\ol{k}$ be an algebraic closure of $k$.
Since $f$ is strict proper surjective,
there exists a strict closed immersion $\pt_{P,\ol{k}}\to Y$.
By \ref{lem:descentproperties.(iii)}
we reduce to the case $Y:=\pt_{P,\ol{k}}$.
The diagram
\[
Y_\red
\rightrightarrows
(Y\times_X Y)_\red
\rightrightrightarrows
(Y\times_X Y\times_X Y)_\red
\rightrightrightrightarrows
\cdots
\]
can be identified with the constant diagram
\[
Y
\rightrightarrows
Y
\rightrightrightarrows
Y
\rightrightrightrightarrows
\cdots
\]
Hence, by \cref{lem:reduced},
it suffices to show $\cF(X)\simeq \cF(Y)$.
To conclude, note that, 
as in the proof of \cref{lem:reduced}, 
there is an equivalence of categories of Kummer \'etale sheaves of $\Z$-modules
\[
\Sh_\ket((\pt_{P,k})_{\ket},\Z)
\simeq
\Sh_\ket((\pt_{P,\ol{k}})_{\ket},\Z)
\]

Next, we discuss the case in which \(\ul{f}\) is an isomorphism. Since we assume that \(f\) is saturated, 
the terms in the \v{C}ech nerve of \(f\) share the same underlying scheme, \(\ul{X}\). 
By \cref{lem:reduction}, it suffices to demonstrate that the pullback of \(f\) along a morphism \(Z \to X\) in \(\lSch_{qcqs}/S\), where \(\ul{Z}\) is the spectrum of a separably closed field, satisfies \(\cF\)-descent. 
By replacing \(X\) with \(Z\), we can assume that \(\ul{X}\) is the spectrum of a separably closed field, and we only need to show that \(f\) satisfies \(\cF\)-descent rather than universal \(\cF\)-descent. 
Note that \(X \cong \pt_{P,k}\) for some sharp fs monoid \(P\), as stated in \cref{exm:log point}. Since \(\ul{f}\) is an isomorphism, we have \(\ul{Y} \cong \Spec(k)\), which implies that \(Y \cong \pt_{Q,k}\) for some sharp fs monoid \(Q\), according to \cref{exm:log point}. 
The induced homomorphism \(\Gamma(X,\cM_X) \to \Gamma(Y,\cM_Y)\) takes the form \(\theta: P \oplus k^* \to Q \oplus k^*\). By restriction, we obtain the homomorphism \(\ol{\theta}: P \to Q\).

Furthermore, the morphism \( X \to S \) factors through \( \ul{X} \cong \Spec(k) \), which allows us to assume that \( S = \Spec(k) \). Consequently, the sheaf \( \mathcal{G} \) on \( S_\text{ket} \) becomes a constant sheaf associated with a torsion abelian group \( G \). Since every torsion abelian group is a filtered colimit of finitely generated abelian groups, and because Kummer étale cohomology commutes with filtered colimits \cite[Lemma 5.3]{MR3658728}, we may further assume \( G = \mathbb{Z}/\ell \) for some prime number \( \ell \).

Each term $Y_i$ in the \v{C}ech nerve of $f$ has underlying scheme $\Spec{k}$ because $\ul{f}$ is an 
isomorphism and $f$ is saturated.
The description of the fiber product in the category of log schemes in \cite[Proposition III.2.1.2]{Ogu} 
yields an isomorphism
\[
\Gamma(Y_i,\cM_Y)
\cong
(Q\oplus k^*)\oplus_{P\oplus k^*} \cdots \oplus_{P\oplus k^*} (Q\oplus k^*)
\]
Together with \cite[Proposition I.4.2.5.(5)]{Ogu} we have
\[
\ol{\Gamma(Y_i,\cM_Y)}
\cong
Q\oplus_P \cdots \oplus_P Q =: Q^i
\]
Thus $Y_i\cong \pt_{Q^i,k}$.
The \v{C}ech nerve of $f$ is
\[
\pt_{Q^0,k} \rightrightarrows \pt_{Q^1,k}
\rightrightrightarrows
\pt_{Q^2,k}
\rightrightrightarrows
\cdots
\]

When $\ell=p$, we use \cref{lem:no log point} to identify 
\begin{equation}
\label{eqn:etalecohomology}
\cF(\pt_{Q^0,k}) \rightrightarrows \cF(\pt_{Q^1,k})
\rightrightrightarrows
\cF(\pt_{Q^2,k})
\rightrightrightrightarrows
\cdots
\end{equation}
with the constant diagram
\[
\cF(k) \rightrightarrows \cF(k)
\rightrightrightarrows
\cF(k)
\rightrightrightrightarrows
\cdots
\]
Thus $Y\to X$ satisfies $\cF$-descent.

When $\ell\neq p$, 
we use \cref{lem:cohomology} to identify \eqref{eqn:etalecohomology}
with 
\[
\cF(Y') \rightrightarrows \cF(Y'\times_{X'}Y')
\rightrightrightarrows
\cF(Y'\times_{X'}Y'\times_{X'} Y') 
\rightrightrightrightarrows
\cdots
\]
where $X':=\A_{P^\gp,k}$, $Y':=\A_{Q^\gp,k}$, and $f'\colon Y'\to X'$ is induced by $\theta$.
Since $\A_{\Z,k}\cong \G_{m,k}$, $f'$ has a section.
This concludes the proof by \ref{lem:descentproperties.(i)}.
\end{proof}

\begin{remark}
\label{remark:notstraigthforward}
\cref{thm:etalecohomology} has a 
scheme-theoretic precursor 
\cite[Proposition 5.2]{MR4278670}. 
However, 
the proof does not directly generalize to the logarithmic setting.
If $Y$ is a proper scheme over an algebraically closed field $k$, 
then the structure morphism $Y\to \Spec{k}$ admits a section.
This is used in \cite[Proof of Proposition 5.2]{MR4278670}.
However, if $Y$ is a proper fs log scheme over $k$,
then $Y\to \Spec{k}$ does not necessarily admit a section, 
e.g., when $Y=\pt_{\N,k}$.
\end{remark}

\subsection{Logarithmic differentials in characteristic 0}
\label{subsection:Logarithmic differentials in characteristic 0}
Working in characteristic zero, 
we study the log $h$-sheafification of logarithmic differentials.
The interesting fact that differential forms mesh well with the $h$-topology is 
independently due to Huber--Jörder \cite{HuberJorder}, \cite{Huber}, 
and Lee \cite{1251832562}; 
more precisely, 
the $h$-sheafification $\Omega_\h^q$ defines differential 
forms on singular schemes \cite[Theorem 3.6, Corollary 6.5]{HuberJorder}. 
Extending a cohomology theory from smooth schemes to singular schemes using proper 
$h$-covers goes back to Deligne's work on mixed Hodge structures \cite{HodgeIII}. 

We begin by recalling the definition of the sheaf of logarithmic differentials, 
see \cite[Chapter IV]{Ogu} for a full treatment. 
Let $f \colon X \to S$ be a morphism of fs log schemes. 
We write $\Omega^1_{X/S}$ for the sheaf of $\cO_X$-modules generated by universal log derivations 
\cite[Theorem IV.1.2.4]{Ogu}. 
By \cite[Theorem IV.1.2.4]{Ogu} it is the sheaf of $\mathcal{O}_X$-modules given by
\begin{equation}
\label{eqn:log differential}
\Omega^1_{X/S} = \big( \Omega_{\ul{X}/\ul{S}}^1 \oplus (\mathcal{O}_X \otimes \mathcal{M}^{\gp}_X) \big) / \sim
\end{equation}
Here, 
$\sim$ is the sub $\mathcal{O}_X$-module generated by sections of the form
\begin{enumerate}
 \item[(i)] $(d \alpha(a), 0) - (0, \alpha(a) \otimes a)$ for $a \in \mathcal{M}_X,$ and
 \item[(ii)] $(0, 1 \otimes a)$ for $a \in \text{Im}(f^{-1} \mathcal{M}_S \to \mathcal{M}_X)$,
\end{enumerate}
where $\alpha\colon \cM_X\to \cO_X$ is the log structure morphism.
For $q\geq 0$, the \emph{sheaf of log differentials} on $X$ is defined as 
\[
\Omega^q_{X/S} := 
\left\{
\begin{matrix*}[l]
    \cO_X & q = 0 \\
    \wedge^q \Omega^1_{X/S} & q > 0
\end{matrix*}
\right.
\]

We denote the sheaf of differentials as $\Omega^q_X$ when $S = k$, where $k$ is a field. If $X$ is a fine and saturated (fs) log scheme of finite type over $k$, then $\Omega^q_X$ is a coherent log étale sheaf, as stated in \cite[Chapter 9.1, Lemma 9.1.1]{logDM}. Additionally, if $X$ belongs to the category of smooth schemes over $k$ (denoted as $\SmlSm/k$), then $\Omega^q_X$ is a locally free $\mathcal{O}_X$-module, according to \cite[Proposition IV.3.2.1]{Ogu}.
For $q \geq 0$, let $\Omega^q$ be the presheaf on the category of log schemes over $k$ (denoted as $\lSch/k$), defined by 
$$X \longmapsto \Gamma(X, \Omega^q_X)$$

\begin{example}\label{exm:smoothdifferentials}
When $X = (\ul{X}, \partial X)$ is a smooth log smooth log scheme over a field $k$, where the boundary $\partial X$ (étale locally) is given in local coordinates 
$$V(x_1, x_2 \hdots, x_r) \subset \A^n$$
the sheaf of log differentials $\Omega^i_X$ agrees with the sheaf of differentials $\Omega^i_{\ul{X}}(\log \partial X)$ with log poles along $\partial X.$
Hence $\Omega^i_X$ is generated by the symbols
\[
d\log x_1, \hdots, d\log x_r, dx_{r+1}, \hdots, dx_n
\]
For instance, if $\partial X$ is the strict normal crossing variety 
$$\Spec{\big( k[x_1, x_2, \hdots, x_n]/(x_1 x_2 \cdots x_r) \big)}$$
then $\Omega^1_X$ is generated by symbols
\[d\log x_1, \hdots, d\log x_r, dx_{r+1}, \hdots, dx_n\]
\end{example}

\begin{proposition}\label{prop:dividingdescent}
Every dividing cover $Y \to X$ of noetherian fs log schemes of finite Krull dimension over a field $k$ satisfies universal $R\Gamma(-,\Omega^q)$-descent for all $q \geq 0.$
In particular, there is a naturally induced quasi-isomorphism
\[
R\Gamma(X,\Omega^q)
\simeq
R\Gamma(Y,\Omega^q)
\]
\end{proposition}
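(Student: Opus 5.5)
The plan is to reduce, via the structure of dividing covers, to the case of a toric subdivision, and then to compute both sides through the morphism of underlying schemes. First, since dividing covers are stable under base change and universal descent is defined through base changes, it suffices to show that every dividing cover $f\colon Y\to X$ satisfies $R\Gamma(-,\Omega^q)$-descent. The key observation is that $f$ is a log étale monomorphism. Hence the diagonal $Y\to Y\times_X Y$ is an isomorphism in the category of fs log schemes, and the Čech nerve of $f$ is the constant simplicial object on $Y$. Descent is therefore equivalent to the single quasi-isomorphism $R\Gamma(X,\Omega^q)\simeq R\Gamma(Y,\Omega^q)$, i.e. to the ``in particular'' statement. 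Because a base change of a dividing cover of noetherian fs log schemes of finite Krull dimension over $k$ is again one, it suffices to prove this quasi-isomorphism.

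To prove it, I would show that $\cO_X$-linearly $\Omega^q_X\to Rf_*\Omega^q_Y$ is an isomorphism; this is Zariski local on $X$. Locally $X$ has a sharp chart $M$, and by \cite[Proposition A.11.5]{logDM} (as in the proof of \cref{prop:diviningcover is a logvcover}) $f$ is refined by $X\times_{\A_M}\T_\Sigma\to X$ for a subdivision $\Sigma\to\Spec M$. Since a dividing cover through which another dividing cover factors is itself dividing, a two-out-of-three argument reduces us to the case $Y=X\times_{\A_M}\T_\Sigma$. Moreover, by further subdividing and using the same two-out-of-three argument, one may take $\Sigma$ to come from a log blow-up along a coherent ideal generated by elements of $M$.

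Since $f$ is log étale, $f^*\Omega^q_X\cong\Omega^q_Y$ (log étale morphisms induce isomorphisms on log differentials, \cite[Chapter IV]{Ogu}). By the projection formula, which applies because $\Omega^q_X$ is locally free when $X$ is log smooth, or more generally by flat base change along the strict morphism $X\to\A_M\times k$, the question becomes whether $Rf_*\cO_Y\simeq \cO_X$. For the toric model $\T_\Sigma\to\A_M$ this is the classical fact that proper birational toric morphisms of normal toric varieties have $R\pi_*\cO=\cO$ \cite{toric}. Because $Y=X\times_{\A_M}\T_\Sigma$ is the fs fibre product, and $X\to \A_{M,k}$ need not be flat, I would use the fact that for dividing covers the fs fibre product coincides with the naive one up to the saturation, together with Tor-independence of toric blow-ups (the fibre product with a toric modification of the affine toric variety).

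The main obstacle is this base-change step: identifying $Rf_*\cO_Y$ with the pullback of $R\pi_*\cO_{\T_\Sigma}$ when $X\to\A_M$ is not flat. My first attempt would be to use a strict étale local description of $X$ over $\A_{M,k}\times\A^n$ in the log smooth case, and to reduce the general case to it via the finite-dimensionality hypothesis, using it for Čech/hypercover bounds.
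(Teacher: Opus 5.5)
The gap is the step you flag at the end, and in the stated generality it cannot be closed. Your reductions are sound. A dividing cover is a monomorphism, so its \v{C}ech nerve is constant and descent reduces to $R\Gamma(X,\Omega^q)\simeq R\Gamma(Y,\Omega^q)$. Log \'etaleness gives $f^*\Omega^q_X\cong\Omega^q_Y$. After that, everything hinges on $Rf_*f^*\Omega^q_X\simeq\Omega^q_X$ for $Y=X\times_{\A_M}\T_\Sigma$.

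Since $X\to\A_M$ is strict, this fs fibre product is just the scheme-theoretic fibre product, so saturation is not the issue. The issue is that $X\to\A_{M,k}$ is in general not flat. The base change is then not Tor-independent; this already happens for $X$ the origin of $\A_{\N^2}$. The finite Krull dimension hypothesis gives you no way to reduce to the log smooth case.

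In fact the required identity is false. Let $X=\Spec k[x,y]/(x^2,y^2)$ with log structure charted by $\N^2$, $e_1\mapsto x$, $e_2\mapsto y$. Let $f\colon Y\to X$ be the log blow-up along the ideal generated by $e_1,e_2$, i.e.\ the pullback of $\mathrm{Bl}_0\A^2\to\A_{\N^2}$. Then $\ul Y$ is covered by $\Spec k[x,t]/(x^2)$ and $\Spec k[y,s]/(y^2)$ (with $y=xt$, $x=ys$), so it is the doubled exceptional curve. A \v{C}ech computation gives $H^0(Y,\cO_Y)=k\oplus kx\oplus ky$ and $H^1(Y,\cO_Y)=0$. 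By contrast, $\Gamma(X,\cO_X)$ has basis $1,x,y,xy$, and $xy\mapsto x^2t=0$. Here $\Omega^1_X$ is even free on $d\log x$ and $d\log y$, so $\Omega^q_Y\cong\cO_Y^{\binom{2}{q}}$ and descent fails for $q=1,2$ as well. So the proposition as literally stated (arbitrary noetherian fs $X$) is false, and no refinement of your last paragraph can rescue it.

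What your argument does prove is the log smooth case. This is the only case in which the paper applies the proposition to $\Omega^q$: \cref{thm:loghdifferentials} and \cref{cor:loghsheafificationisaloghsheaf} use it for $X\in\lSm/k$. The paper itself gives no argument, only the citation \cite[Corollary 6.17]{BLPO}.

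For log smooth $X$, locally one can choose the chart so that $X\to\A_{M,k}$ is smooth (Kato's structure theorem). Flat base change together with the toric vanishing $R\pi_*\cO_{\T_\Sigma}\simeq\cO_{\A_M}$ then gives $Rf_*\cO_Y\simeq\cO_X$. The projection formula applies because $\Omega^q_X$ is locally free.

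Even there, your ``two-out-of-three'' reduction is incomplete. From $h=f\circ g$ with $h$ a toric model you only get that $\Omega^q_X\to Rf_*\Omega^q_Y$ is split injective. You also need the claim for $g\colon X\times_{\A_M}\T_\Sigma\to Y$. One way is to check that $g$ is itself, Zariski locally on the log smooth $Y$, a toric model.
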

\begin{proof}
We refer to \cite[Corollary 6.17]{BLPO}.
\end{proof}

\begin{definition}
For $q \geq 0,$ 
the sheaf of \textit{log $h$-differential forms} $\Omega^q_{\lh}$ is the log $h$-sheafification of the presheaf $\Omega^q$ on $\lSch/k.$
\end{definition}

We conjecture that $R\Gamma_{\lh}(-,\Omega^q_{\lh})$ is homotopy invariant in the following sense.

\begin{conjecture}\label{conj:loghdifferantialshomotopyinvariant}
Let $k$ be a field of characteristic 0.
The log $h$-sheaf $R\Gamma_{\lh}(-,\Omega^q_{\lh})$ is $(\P^m, \P^{m-1})$-invariant for all $m\geq 1$ and $q\geq 0$.
That is, 
for all $X \in \lSch/k$, 
the projection $X \times (\P^m, \P^{m-1}) \to X$ induces a quasi-isomorphism
$$R\Gamma_{\lh}(X,\Omega^q_{\lh}) \longrightarrow R\Gamma_{\lh}(X \times (\P^m, \P^{m-1}),\Omega^q_{\lh})$$
\end{conjecture}

\begin{conjecture}\label{conj:loghdifferentialsequal}
Let $k$ be a field of characteristic 0, and
let $X$ be a smooth scheme with trivial log structure. Then the morphism
$$R\Gamma(X, \Omega^q) \longrightarrow R\Gamma_{\lh}(X, \Omega^q_{\lh})$$
is a quasi-isomorphism for all $q\geq 0.$
\end{conjecture}

\begin{remark}
We refer to \cite[Theorem 4.7]{Geisser} and \cite[Corollary 6.5]{HuberJorder} 
for similar statements for schemes in the $eh$- and $h$-topology, respectively.
A technical obstruction for generalizing \cite[Theorem 4.7]{Geisser} to our setting 
is the lack of a logarithmic version of \cite[Corollary 2.6]{Geisser}.
For example,
the log point
$\pt_{\N,k}$
does not admit a log $h$-cover that is log smooth over $k$.
\end{remark}

Conjecturally, we obtain the following logarithmic analog of \cite[Corollary 6.5]{HuberJorder}.
\begin{theorem}\label{thm:loghdifferentials}
Let $k$ be a field of characteristic 0 and $X \in \lSm/k$.
\cref{conj:loghdifferantialshomotopyinvariant,conj:loghdifferentialsequal} imply 
there is a quasi-isomorphism
$$R\Gamma(X,\Omega^q) \overset{\simeq}{\longrightarrow} R\Gamma_{\lh}(X,\Omega^q_{\lh})$$
for all $q\geq 0.$
\end{theorem}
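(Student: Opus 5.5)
We argue by induction on the number of irreducible components of the boundary $\partial X$, as announced in the introduction, using the logarithmic Gysin sequence of \cite{logDM}. First reduce to the case where $X$ is a smooth scheme $\ul{X}$ with boundary a strict normal crossing divisor $\partial X=D_1+\cdots+D_r$. Any $X\in\lSm/k$ admits a dividing cover $X'\to X$ of this form, by toric resolution applied to local charts. By \cref{prop:dividingdescent} the left-hand side $R\Gamma(-,\Omega^q)$ is invariant under dividing covers. By \cref{prop:dividingcoversubtrusive} and \cref{thm:mainsubtrusive} a dividing cover is a log $h$-cover and a monomorphism, so its \v{C}ech nerve is constant and the right-hand side is invariant as well. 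Both sides also satisfy Zariski (strict Nisnevich) descent, so the question is local on $\ul{X}$.

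If $r=0$, the claim is exactly \cref{conj:loghdifferentialsequal}. For the inductive step, write $X=(\ul{X},D_1+\cdots+D_r)$, $Y=(\ul{X},D_1+\cdots+D_{r-1})$ and $Z=(D_r,(D_1+\cdots+D_{r-1})|_{D_r})$. Both $Y$ and $Z$ have fewer boundary components, so the induction hypothesis applies to them. The log Gysin (residue) sequence
\[
0\to \Omega^q_Y\to \Omega^q_X\xrightarrow{\mathrm{res}} \Omega^{q-1}_Z\to 0
\]
gives a fiber sequence
\[
R\Gamma(Y,\Omega^q)\to R\Gamma(X,\Omega^q)\to R\Gamma(Z,\Omega^{q-1}).
\]
The plan is to produce a compatible fiber sequence
\[
R\Gamma_{\lh}(Y,\Omega^q_{\lh})\to R\Gamma_{\lh}(X,\Omega^q_{\lh})\to R\Gamma_{\lh}(Z,\Omega^{q-1}_{\lh})
\]
and conclude by the five lemma.

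To get the log $h$ Gysin sequence, I would follow the construction in \cite{logDM}. Any $(\P^\bullet,\P^{\bullet-1})$-invariant, dividing-invariant, strict Nisnevich sheaf $E$ on log smooth schemes has a Gysin cofiber sequence. Its cofiber term is identified, via deformation to the normal bundle, with $E$ evaluated on the Thom space of the normal bundle of $D_r$. That Thom space is computed by the projective bundle formula as $E(Z)$ shifted, with a twist. For $E=R\Gamma_{\lh}(-,\Omega^\bullet_{\lh})$:
\begin{itemize}
\item dividing invariance holds by the argument in the first paragraph;
\item $(\P^m,\P^{m-1})$-invariance is \cref{conj:loghdifferantialshomotopyinvariant};
\item Nisnevich descent is automatic, since the log $h$-topology is finer than the strict Nisnevich topology (\cref{proposition:the log h-topology is finer than the log cdh-topology}).
\end{itemize}
This gives a fiber sequence whose third term is a Thom-twisted $E(Z)$. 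The projective bundle formula for the twist should identify that term with $R\Gamma_{\lh}(Z,\Omega^{q-1}_{\lh})$. For this I would use the first Chern class of $\mathcal{O}(1)$ as a log $h$ differential form $d\log$, which shifts $q$ by one.

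The main obstacle is naturality. The comparison map $R\Gamma(-,\Omega^q)\to R\Gamma_{\lh}(-,\Omega^q_{\lh})$ must be compatible with both Gysin sequences, in particular with the identification of the residue map with the Thom/Gysin boundary. My first attempt would be to construct the Gysin sequence functorially for the whole morphism of sheaves $R\Gamma(-,\Omega^q)\to R\Gamma_{\lh}(-,\Omega^q_{\lh})$. For this it suffices to know the left-hand side also satisfies the hypotheses. Dividing invariance is \cref{prop:dividingdescent}, and $(\P^m,\P^{m-1})$-invariance of ordinary log Hodge cohomology on $\lSm/k$ is known from \cite{BLPO}. The two sequences then arise from the same natural construction, the comparison map is compatible with them, and the five lemma completes the induction.
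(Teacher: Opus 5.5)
Your skeleton matches the paper's. You reduce to $\SmlSm/k$ using dividing invariance of both sides, and you induct on the number of boundary components with \cref{conj:loghdifferentialsequal} as the base case. You then apply the Gysin sequence of \cite[Theorem 7.5.4]{logDM} to the comparison map $R\Gamma(-,\Omega^q)\to R\Gamma_{\lh}(-,\Omega^q_{\lh})$; the right-hand side lies in $\logSH^\eff(k)$ by \cref{conj:loghdifferantialshomotopyinvariant}. This gives a morphism of fiber sequences whose remaining terms are $E(\mathrm{Th}(\rN_ZY))$.

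The gap is in how you treat those Thom terms. You want to identify $R\Gamma_{\lh}(\mathrm{Th}(\rN_ZY),\Omega^q_{\lh})$ with $R\Gamma_{\lh}(Z,\Omega^{q-1}_{\lh})[-1]$ through a projective bundle formula built from $c_1(\cO(1))$. Nothing available gives such a formula for log $h$-differentials. \cref{conj:loghdifferantialshomotopyinvariant} only provides $(\P^\bullet,\P^{\bullet-1})$-invariance. For a general object of $\logSH^\eff(k)$, with no transfers and no orientation, the Thom term need not be ``$E(Z)$ shifted with a twist''. Even for a trivial line bundle it is the $\P^1$-loops of $E$ evaluated on $Z$, and identifying that with $R\Gamma_{\lh}(-,\Omega^{q-1}_{\lh})[-1]$ \emph{is} the projective bundle formula for $Z\times\P^1\to Z$. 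You can transport $c_1(\cO(1))$ to the log $h$ side along the comparison map. But showing that cup product with it gives an equivalence
\[
R\Gamma_{\lh}(Z,\Omega^q_{\lh})\oplus R\Gamma_{\lh}(Z,\Omega^{q-1}_{\lh})[-1]\longrightarrow R\Gamma_{\lh}(\P(\rN_ZY\oplus\cO),\Omega^q_{\lh})
\]
is essentially the comparison theorem for $\P(\rN_ZY\oplus\cO)$ itself. So, as written, your five-lemma step has no input on the Thom terms.

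The missing observation is that you never need to pass to $Z$ and $\Omega^{q-1}$. By \cite[Proposition 7.4.5]{logDM} there is a natural equivalence
\[
R\Gamma(\mathrm{Th}(\rN_ZY),\Omega^q)\simeq\fib\big(R\Gamma(\P(\rN_ZY\oplus\cO),\Omega^q)\to R\Gamma(\P(\rN_ZY),\Omega^q)\big),
\]
and likewise for $R\Gamma_{\lh}(-,\Omega^q_{\lh})$. Both projective bundles lie in $\SmlSm/k$, and their boundaries are pulled back from $\partial Z$, so they have fewer boundary components than $X$. The induction hypothesis therefore applies to them directly and gives the quasi-isomorphism on the Thom terms. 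This is exactly how the paper closes the induction. It makes the residue sequence and Chern classes unnecessary; the log $h$ projective bundle formula you wanted then follows afterwards from the Zariski one.

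A minor point common to your argument and the paper's: ``fewer components'' for $Z$, or for these bundles, is literally true only if each $D_i\cap D_r$ is irreducible. It is cleaner to induct on the number of smooth, not necessarily irreducible, divisors making up $\partial X$.
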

\begin{proof}
By \cref{prop:dividingdescent} we reduce the question to $X \in \SmlSm/k$.
We perform induction on the number $d$ of irreducible components of the strict normal crossing divisor $\partial X$ of $X.$ 
If $d=0,$ see \cref{conj:loghdifferentialsequal}. 
Suppose the claim holds for $d-1$, and let $Z$ be an irreducible component of $\partial X$.
Consider $Y\in \SmlSm/k$ such that $\ul{Y}=\ul{X}$ and $\partial Y$ is obtained by removing 
$Z$ from $\partial X$.
By \cref{conj:loghdifferantialshomotopyinvariant} the log $h$-sheaf 
$R\Gamma_{\lh}(-,\Omega^q_{\lh})$ is representable in $\logSH^\eff(k)$.
Applying \cite[Theorem 7.5.4]{logDM} to both $R\Gamma(-,\Omega^q)$ and 
$R\Gamma_{\lh}(-,\Omega^q_{\lh})$ we obtain a morphism of fiber sequences
$$
\begin{tikzcd}
R\Gamma(\mathrm{Th}(\rN_Z Y), \Omega^q) \ar[r]\ar[d] &  R\Gamma(Y, \Omega^q) \ar[r] \ar[d, "\simeq"]  & R\Gamma(X, \Omega^q)\ar[d]
\\
R\Gamma_{\lh}(\mathrm{Th}(\rN_Z Y), \Omega^q_{\lh}) \ar[r]&  R\Gamma_{\lh}(Y, \Omega^q_{\lh}) \ar[r] & R\Gamma_{\lh}(X, \Omega^q_{\lh})
\end{tikzcd}
$$
where the middle morphism is a quasi-isomorphism by the induction hypothesis.
By \cite[Proposition 7.4.5]{logDM} there is a natural quasi-isomorphism
$$
R\Gamma(\mathrm{Th}(\rN_ZY),\Omega^q)
\simeq
\fib\big(R\Gamma(\P(\rN_ZY\oplus \cO),\Omega^q)\to R\Gamma(\P(\rN_ZY),\Omega^q) \big)
$$
and similarly for $R\Gamma_\lh(\mathrm{Th}(\rN_ZY), \Omega_\lh^q)$.
The boundaries of $\P(\rN_ZY\oplus \cO)$ and $\P(\rN_ZY)$ have $d-1$ irreducible components 
since each irreducible component is a projective bundle over the corresponding irreducible 
component of $Z$.
Hence the induction hypothesis ensures there is a quasi-isomorphism
$$R\Gamma(\mathrm{Th}(\rN_ZY), \Omega^q) 
\overset{\simeq}{\longrightarrow} R\Gamma_\lh(\mathrm{Th}(\rN_Z Y), \Omega_\lh^q)$$
This concludes the proof.
\end{proof} 

\begin{corollary}\label{cor:loghsheafificationisaloghsheaf}
Let $k$ be a field of characteristic $0$.
\cref{conj:loghdifferantialshomotopyinvariant,conj:loghdifferentialsequal} 
imply that $R\Gamma(-,\Omega^q)$ is a sheaf on $(\lSm/k)_{\lh}$.
\end{corollary}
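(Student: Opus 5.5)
The plan is to deduce the corollary formally from \cref{thm:loghdifferentials}. Let $a\colon (\lSm/k)_{\lh}\to(\lSch/k)_{\lh}$ denote the inclusion of the full subcategory, with the log $h$-topology on $\lSm/k$ being the induced one: a family in $\lSm/k$ covers if it covers in $\lSch/k$. The presheaf $X\mapsto R\Gamma_{\lh}(X,\Omega^q_{\lh})$ on $\lSch/k$ is a log $h$-sheaf with values in $D(\Z)$ by construction. Indeed, it is the derived global sections of a log $h$-sheaf, so it satisfies \v{C}ech (indeed hyper-) descent for every log $h$-covering family of $\lSch/k$.

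The first step restricts this sheaf along $a$. For a log $h$-covering $\{U_i\to X\}$ with $X,U_i\in\lSm/k$, the fiber products $U_i\times_X U_j$ need not lie in $\lSm/k$ (\cref{rmk:loghtopologynofibreproducts}). Hence the sheaf condition on $(\lSm/k)_{\lh}$ has to be formulated via sieves, or equivalently via refinements in $\lSm/k$ of the \v{C}ech nerve computed in $\lSch/k$. I would argue as follows. A sieve $R$ on $X$ in $\lSm/k$ is covering iff the sieve it generates in $\lSch/k$ is covering. The limit of a sheaf over the category of elements of $R$ agrees with the limit over the generated sieve in $\lSch/k$, because every object of the latter maps to some object of $R$ and the inclusion is cofinal. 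It follows that the restriction $a^*$ of a log $h$-sheaf on $\lSch/k$ is a log $h$-sheaf on $\lSm/k$. This is the standard comparison lemma, and it only requires $\lSm/k$ to be closed under the relevant refinements. Since the topology on $\lSm/k$ is induced, no extra hypothesis is needed.

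The second step uses \cref{thm:loghdifferentials}. Assuming \cref{conj:loghdifferantialshomotopyinvariant,conj:loghdifferentialsequal}, the natural map $R\Gamma(X,\Omega^q)\to R\Gamma_{\lh}(X,\Omega^q_{\lh})$ is a quasi-isomorphism for all $X\in\lSm/k$. This map is natural in $X$, since it is induced by the sheafification map $\Omega^q\to\Omega^q_{\lh}$ and the change-of-topology map from Zariski (or strict étale) cohomology to log $h$-cohomology. Hence it gives an equivalence of $D(\Z)$-valued presheaves on $\lSm/k$ between $R\Gamma(-,\Omega^q)$ and $a^*R\Gamma_{\lh}(-,\Omega^q_{\lh})$. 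Being a sheaf is invariant under equivalence of presheaves, so $R\Gamma(-,\Omega^q)$ is a sheaf on $(\lSm/k)_{\lh}$. Taking $H^0$ recovers the statement for $\Omega^q$ itself in the abelian sense.

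The main obstacle I expect is the first step, namely making precise the meaning of ``sheaf on $(\lSm/k)_{\lh}$'' when fiber products do not exist in $\lSm/k$, and checking that restriction preserves descent. If the cofinality argument proves delicate, I would instead use the fact that every log $h$-cover of a log smooth $X$ is refined by compositions of dividing covers, strict Nisnevich covers and proper surjections from log smooth objects (in the spirit of \cref{thm:structuretheorem}). Then I would verify descent only along such generating covers, with \cref{prop:dividingdescent} handling the dividing part directly.
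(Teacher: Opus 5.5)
Your Step 2 is the paper's entire proof. The paper first cites \cref{prop:dividingdescent} to pass to $\SmlSm/k$, but that reduction is already inside \cref{thm:loghdifferentials}. The gap is your Step 1. Restriction along $\lSm/k\subset\lSch/k$ does \emph{not} send log $h$-sheaves to sheaves for the induced sieve topology. Worse, with that formalization the corollary's conclusion is simply false.

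Take the log $h$-cover $\square\to\P^1$ from \cref{rmk:loghtopologynofibreproducts}.
\begin{itemize}
\item Every $Y\in\lSm/k$ is log regular, so $\alpha_Y\colon\cM_Y\to\cO_Y$ is injective. A $\P^1$-morphism $Y\to\square$ is determined by a lift to $\cM_Y$ of the coordinate $t$ at $\infty$, so it is unique if it exists.
\item Hence the sieve $R$ generated by $\square\to\P^1$ in $\lSm/k$ has $\square$ as a terminal object, and your sheaf condition for $R$ reads $F(\P^1)\simeq F(\square)$.
\item This fails for $F=R\Gamma(-,\Omega^1)$: $R\Gamma(\P^1,\Omega^1)\simeq k[-1]$, while $R\Gamma(\square,\Omega^1)\simeq R\Gamma(\P^1,\cO(-1))\simeq 0$.
\item It also fails for the restriction of an honest log $h$-sheaf, with no conjectures involved. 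Take $R\Gamma_{\letale}(-,\Z/n)$ over $\ol{k}$. It is a log $v$-sheaf by \cref{thm:etalecohomology} and is $\square$-invariant, yet $H^2(\P^1_{\ol k},\Z/n)\neq 0$.
\end{itemize}

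Your cofinality argument breaks because nonempty comma categories are not enough; they must be connected (weakly contractible). Let $Z=\square\times_{\P^1}\square$, formed in $\lSch/k$. Composing with the unique map $Y\to\square$ is invariant along zigzags in the category of factorizations $Z\to Y\to\P^1$ with $Y\in R$. The two projections $Z\rightrightarrows\square$ therefore lie in different components.

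The comparison lemma would need such $Z$ to be log $h$-locally log smooth, and it is not. By the injectivity above, every map from a log smooth object to $Z$ factors through the diagonal $\square\to Z$. That diagonal is not universally surjective (\cref{example:surjectivebutnotuniversally}). Since the failure already happens for a single cover, your fallback via refinements and \cref{thm:structuretheorem} cannot rescue it either.

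The statement has to be read the way the paper's two-line proof implicitly reads it. ``Sheaf on $(\lSm/k)_{\lh}$'' means: on $\lSm/k$, $R\Gamma(-,\Omega^q)$ is the restriction of the log $h$-sheaf $R\Gamma_{\lh}(-,\Omega^q_{\lh})$ on $\lSch/k$, i.e.\ it agrees with its log $h$-sheafification there. Descent along a log $h$-cover of log smooth objects is then tested on the \v{C}ech nerve formed in $\lSch/k$, with the non-log-smooth terms evaluated by $R\Gamma_{\lh}(-,\Omega^q_{\lh})$. This is exactly the distinction \cref{rmk:loghtopologynofibreproducts} insists on.

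With this reading, drop Step 1 entirely. Your Step 2 is then the whole proof: the map $R\Gamma(-,\Omega^q)\to R\Gamma_{\lh}(-,\Omega^q_{\lh})$ is natural, and \cref{thm:loghdifferentials} says it is an equivalence on $\lSm/k$.
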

\begin{proof}
\cref{prop:dividingdescent} reduces the question to $\SmlSm/k$. 
\cref{thm:loghdifferentials} finishes the proof.
\end{proof}

\subsection{Stable logarithmic \texorpdfstring{$h$}{h}-motives}
\label{subsection:Logarithmic H-Motives}
In this section we show that the constant torsion sheaf $\Z/n$ is representable in the category of $S^1$-stable log $h$-motives.

\begin{definition}
Let $S$ be a qcqs scheme and let $t$ be a topology on $\lSch/S.$
Let $\Sh_t(\lSch_{qcqs}/S,\Sp)$ denote the $\infty$-category of $t$-sheaves on $\lSch_{qcqs}/S$ with values in spectra. By \cite[Proposition 3.2.15]{logSH} the cyclic permutation 
$$(\P^1, \infty)^{\wedge 3} \overset{\sigma_{123}}{\longrightarrow} (\P^1, \infty)^{\wedge 3}$$
is homotopic to the identity, so by \cite[Theorem 2.26]{zbMATH06374152} one can consider the $\infty$-category that formally inverts (all powers of) the object $(\P^1, \infty),$ 
denoted 
$$\underline{\logSH}_t^\eff(S) := \Sh_t(\lSch_{qcqs}/S,\Sp)[(\P^\bullet,\P^{\bullet-1})^{-1}]$$
We define the symmetric monoidal $\infty$-category\footnote{We follow \cite{CD09} and write $\underline{\logSH}_t(S)$ 
to emphasize that it is the stable motivic homotopy category of (fs log) schemes $\lSch_{qcqs}/S$ instead of 
(log) smooth (log) schemes $\lSm/S$.}
\begin{gather*}
\underline{\logSH}_t(S):=
\Sp_{\P^1}(\Sh_t(\lSch_{qcqs}/S,\Sp)[(\P^\bullet,\P^{\bullet-1})^{-1}])
\end{gather*}
as the $\infty$-category of $\P^1$-spectra in this $\infty$-category.
\end{definition}

\begin{remark}\label{rmk:loghtopologynofibreproducts}
The log $h$-topology is not preserved under fiber products of log smooth fs log schemes: 
The morphism $\square \to \P^1$ is a log $h$-cover, 
but the fiber product $\square\times_{\P^1}\square$ is not log smooth, 
hence not an object of $\SmlSm/k.$ 
If we try to rectify this by insisting that
$\square\times_{\P^1}\square = \square$, 
then we have an isomorphism of motives $M(\square)\cong M(\P^1)$ and $\P^1$ becomes contractible.
However, this is not desirable in the logarithmic setting.
For this reason, 
we employ qcqs fs log schemes in the definition of stable logarithmic $h$-motives.
\end{remark}

An fs log scheme $X\in \lSch_{qcqs}/S$ represents a presheaf of spaces on $\lSch_{qcqs}/S$ 
defined by $Y \mapsto \Hom_{S}(Y, X).$ 
Consider its $t$-sheafification, 
and let 
$$\Sigma^\infty_{S^1,+} \colon \lSch_{qcqs}/S \to \underline{\logSH}_{t}^\eff(S)$$ 
denote the functor sending an fs log scheme $X$ to its pointed infinite $S^1$-suspension.
A sheaf $\cF$ on $(\lSch_{qcqs}/S)_t$ is called \emph{representable} in 
$\underline{\logSH}_{t}^\eff(S)$ 
if there exist an object $M \in \underline{\logSH}_{t}^\eff(S)$ such that
$$H^{p}_{t}(X, \cF) \cong \pi_0\mathrm{Map}(\Sigma^\infty_{S^1,+} X, \Sigma^p_{S^1} M)$$
for all $p \in \Z,$ where $\mathrm{Map}(-,-)$ denotes the mapping space.

\begin{proposition}\label{prop:Z/nrepresentable}
For $n>1$,
$R\Gamma_\letale(-,\Z/n)$ is representable in the $\infty$-category of $S^1$-stable log $h$-motives
$\ul{\logSH}_{\lh}^\eff(S)$.
\end{proposition}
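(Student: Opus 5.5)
The plan is to show that the presheaf of spectra $\cF\colon X\mapsto R\Gamma_\letale(X,\Z/n)$ (an $H\Z$-module spectrum via Dold--Kan) is itself an object of $\ul{\logSH}_{\lh}^\eff(S)$. Once this is done, representability follows from the Yoneda lemma in the localized $\infty$-category. Concretely, take $M$ to be the spectrum-valued sheaf $\cF$. Then
\[
\pi_0\mathrm{Map}(\Sigma^\infty_{S^1,+}X,\Sigma^p_{S^1}M)\cong \pi_{-p}\cF(X)\cong H^p_\letale(X,\Z/n).
\]
Because $\cF$ will be both an lh-sheaf and local with respect to the $(\P^\bullet,\P^{\bullet-1})$-localization, mapping into it from $\Sigma^\infty_{S^1,+}X$ computes the same thing before and after the localization. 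So the proof has two parts: descent and invariance.

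\emph{Descent.} By \cref{thm:etalecohomology}, applied with $\cG=\Z/n$ (a torsion sheaf on $S_\et$), $\cF$ satisfies log $v$-descent on $\lSch_{qcqs}/S$. The log $h$-topology is coarser than the log $v$-topology, since by \cref{thm:mainsubtrusive} every universally subtrusive finitely presented finite family is a log $v$-cover. Hence $\cF$ is an lh-sheaf. Since descent is required only for Čech nerves of single covers plus finite products, I will note that hypercompleteness is not needed: the $\infty$-topos of sheaves is defined by Čech descent for the generating covers. If the paper's $\Sh_{\lh}$ requires hyperdescent, I will use instead that $\cF$ is bounded below (it takes values in $D(\Z)^{\geq 0}$), and Čech descent then implies hyperdescent for such presheaves.

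\emph{Invariance.} The $\infty$-category $\Sh_{\lh}(\lSch_{qcqs}/S,\Sp)[(\P^\bullet,\P^{\bullet-1})^{-1}]$ is the localization at the maps $X\times(\P^m,\P^{m-1})\to X$. So I must show that $\cF(X)\to\cF(X\times(\P^m,\P^{m-1}))$ is an equivalence for all $m\geq1$ and all $X\in\lSch_{qcqs}/S$. For $m=1$ this is $\square$-invariance of Kummer étale cohomology with torsion coefficients \cite[Theorem 9.1.5]{logSH}, combined with the comparison $R\Gamma_\ket\simeq R\Gamma_\letale$ for torsion sheaves \cite[Proposition 5.4 (2)]{MR3658728}. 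For general $m$, I will use that $(\P^m,\P^{m-1})$ is dividing-equivalent to, or admits a log blow-up relating it to, products and towers of $\square$-bundles. Alternatively, I will invoke the result in \cite{logSH} that $(\P^\bullet,\P^{\bullet-1})$-invariance is equivalent to $\square$-invariance together with dividing invariance. Dividing invariance holds because dividing covers are log $v$-covers (\cref{prop:diviningcover is a logvcover}) and are monomorphisms, so their Čech nerves are constant.

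The main obstacle is this second part: making sure that $\square$-invariance, which is known in \cite{logSH} for log smooth schemes or for the relevant class, holds on all of $\lSch_{qcqs}/S$. To handle it, I will reduce to the base $X$ using the proper base change theorem \cite[Theorem 5.1]{MR1457738}, applied to the proper projection $X\times\square\to X$. This reduces the check to strictly henselian points, and then via \cref{lem:reduction} to log points $\pt_{P,k}$. There, invariance follows from the explicit computations in \cref{lem:cohomology}, \cref{lem:no log point} and \cref{lem:cohomology of log point}.
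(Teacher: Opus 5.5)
You have the Yoneda reduction and the descent half right, and they match the paper: log $v$-descent comes from \cref{thm:etalecohomology}, and the log $h$-topology is coarser than the log $v$-topology by \cref{thm:mainsubtrusive}. The gap is in upgrading $\square$-invariance to $(\P^m,\P^{m-1})$-invariance for an \emph{arbitrary} $X\in\lSch_{qcqs}/S$.

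Your first route is not available. $(\P^m,\P^{m-1})$ has a smooth irreducible boundary, so its log structure has rank at most one. Since subdivisions of $\N$ are trivial, it admits no nontrivial dividing cover or log blow-up, and dividing invariance says nothing about it.

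Your second route uses \cite[Proposition 7.3.1]{logDM} and \cite[Proposition 3.2.18]{logSH} outside their scope. These are statements about presheaves on $\SmlSm$ over a base with trivial log structure. Restricting $\cF$ to that subcategory only gives $\cF(X)\simeq\cF(X\times(\P^m,\P^{m-1}))$ for log smooth $X$. You need it for objects such as $\pt_{\N,k}$ or singular schemes, and nothing in your proposal bridges this.

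The missing idea, which is the key step of the paper's proof, is a base change trick. Fix $X$ and consider $\cF_X\colon Y\mapsto R\Gamma_\letale(Y\times_{\ul{X}}X,\Z/n)$ on $\SmlSm/\ul{X}$. It has dividing descent, because pullbacks of dividing covers are dividing covers and hence log \'etale covers. It is $\square$-invariant because $(Y\times\square)\times_{\ul{X}}X\cong(Y\times_{\ul{X}}X)\times\square$. So the log smooth results apply to $\cF_X$, and evaluating at $Y=\ul{X}$ and at $Y=\ul{X}\times(\P^m,\P^{m-1})$ gives the claim.

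Alternatively, you can make your bundle idea work directly on $\lSch_{qcqs}/S$, but with an \emph{ordinary} blow-up rather than a log blow-up. Since $\cF$ is an lh-sheaf, it has lcdh excision by \cref{proposition:the log h-topology is finer than the log cdh-topology}. Take $O\notin\P^{m-1}$ and $B:=(\mathrm{Bl}_O\P^m,\P^{m-1})$. Then $B$ is Zariski locally $U\times\square$ over $\P^{m-1}$, and the exceptional divisor $E$ is a section. Zariski descent and $\square$-invariance give $\cF(X\times\P^{m-1})\simeq\cF(X\times B)\simeq\cF(X\times E)$. Excision for the abstract blow-up square then gives $\cF(X\times(\P^m,\P^{m-1}))\simeq\cF(X\times O)=\cF(X)$.

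Finally, your sketch of $\square$-invariance on all of $\lSch_{qcqs}/S$ does not work as written. \cref{lem:cohomology} and \cref{lem:no log point} compute the cohomology of $\pt_{P,k}$, not of $\pt_{P,k}\times\square$. It is cleaner to view $X\times\square\to X$ as the base change of $\square_S\to S$ and apply proper base change \cite[Theorem 5.1]{MR1457738}. This reduces the check to $\square$ over separably closed fields, which is the log smooth case you already cite.
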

\begin{proof}
By \cref{thm:etalecohomology} it remains to show 
$R\Gamma_\letale(-,\Z/n)$ satisfies $(\P^m, \P^{m-1})$-invariance for $m\geq 1$.
For $X\in \lSch_{qcqs}/S$,
consider the presheaf on $\SmlSm/\ul{X}$ defined by 
\begin{equation}\label{eq:smoothpresheaf}
Y \longmapsto R\Gamma_{\letale}(Y \times_{\ul{X}} X, \Z/n)
\end{equation}
The presheaf \eqref{eq:smoothpresheaf} satisfies dividing descent by \cref{prop:dividingdescent} 
and $\square$-invariance according to \cite[Lemma 6.9.4]{MR1922832}. 
Since \eqref{eq:smoothpresheaf} is defined on $\SmlSm/\ul{X}$, 
appealing to both 
\cite[Proposition 7.3.1]{logDM} and \cite[Proposition 3.2.18]{logSH} 
(to remove the noetherian hypothesis), 
it also satisfies $(\P^m, \P^{m-1})$-invariance,
which concludes the proof.
\end{proof}

Let $k$ be a field of characteristic $p>0$.
Consider the $\infty$-category
$$\SH_{ h}^\eff(k)
:=
\Sh_\h(\Sm/k,\Sp)[(\A^1)^{-1}].
$$
Then
\[
R\Gamma_{\et}(-,\Z/p)\notin \SH_{h}^\eff(k)
\]
since $R\Gamma_{\et}(\A_k^1,\Z/p)\not\simeq R\Gamma_{\et}(k,\Z/p)$ even though $R\Gamma_\et(-,\Z/p)\in \Sh_\h(\Sm/k,\Sp)$ by
\cite[Proposition 5.2]{MR4278670}.
Hence $R\Gamma_{\ket}(-,\Z/p)$ makes a clear distinction between $\SH_{h}^\eff(k)$ and 
$\ul{\logSH}_{\lh}^\eff(k)$.
\vspace{0.1in}

We end this section with a few observations.
Voevodsky's first construction of triangulated categories of motives over a base scheme $S$ 
\cite[\S 4.1]{Voevodsky-Homology} used the $h$- and $qfh$-topologies; 
these are $\Z[1/p]$-linear categories due to \cite[Proposition 4.1.7]{Voevodsky-Homology}.
Building on the work in this paper, 
it is natural to study a logarithmic analog of Voevodsky's $qfh$-topology. 
Moreover, 
one can ask whether the $\infty$-category
\[
\ul{\mathrm{logDA}}_{\lh}^\eff(S):=\Sh_{\lh}(\lSch_{qcqs}/S,\rD(\mathrm{\Z}))
[(\P^\bullet,\P^{\bullet-1})^{-1}]
\]
and its log $qfh$-version satisfies the projective bundle formula 
\cite[Theorem 4.2.7]{Voevodsky-Homology} when $S$ is equipped with a nontrivial log structure.
The notion of presheaves with log transfers in \cite[Definition 4.1.1]{logDM} is currently only 
available over a field $k$; in fact, 
many of the arguments in \cite{logDM} break down over log points.
Resolving these questions could lead to 
``triangulated categories of log $h$- and $qfh$-motives over log points'' 
without involving log transfers.

\bibliography{biblogSH}
\bibliographystyle{siam}

\end{document}